\newtheorem{theorem}{Theorem}[section]
\newtheorem{thm}[theorem]{Theorem}
\newtheorem{prop}[theorem]{Proposition}
\newtheorem{lem}[theorem]{Lemma}
\newtheorem{fact}[theorem]{Fact}
\newtheorem{cor}[theorem]{Corollary}
\theoremstyle{definition}
\newtheorem{defn}[theorem]{Definition}
\newtheorem*{thmA}{Theorem A}
\newtheorem*{thmB}{Theorem B}
\newtheorem*{thmC}{Theorem C}
\newtheorem*{thmD}{Theorem D}
\newtheorem*{thmE}{Theorem E}
\theoremstyle{remark}
\newcommand{\dcl}{\operatorname{dcl}}
\newcommand{\comp}{\operatorname{compl}}
\newcommand{\cal}[1]{\ensuremath{\mathcal{#1}}}
\newcommand{\Cal}[1]{\ensuremath{\mathcal{#1}}}
\newcommand{\N}{\mathbb{N}}
\newcommand{\Q}{\mathbb{Q}}
\newcommand{\R}{\mathbb{R}}
\newcommand{\Int}{\operatorname{Int}}
\newcommand{\cl}{\operatorname{Cl}}
\newcommand{\gr}{\operatorname{gr}}
\newcommand{\tp}{\operatorname{tp}}
\title{Wild theories with o-minimal open core}
\begin{document}
\date{\today}

\author{Philipp Hieronymi}
\address
{Department of Mathematics\\University of Illinois at Urbana-Champaign\\1409 West Green Street\\Urbana, IL 61801}
\email{phierony@illinois.edu}
\urladdr{http://www.math.illinois.edu/\textasciitilde phierony}

\author{Travis Nell}
\email{tnell2@illinois.edu}
\urladdr{http://www.math.illinois.edu/\textasciitilde tnell2}

\author{Erik Walsberg}
\email{erikw@illinois.edu}
\urladdr{http://www.math.illinois.edu/~erikw}
\date{\today}

\subjclass[2010]{Primary 03C64  Secondary 03C45}
\keywords{O-minimal open core, Tameness, NIP, Distal, Expansions of the real line, Noiseless}

\thanks{The first author was partially supported by NSF grant DMS-1654725.}
 \maketitle

\begin{abstract}
Let $T$ be a consistent o-minimal theory extending the theory of densely ordered groups and let $T'$ be a consistent theory. Then there is a complete theory $T^*$ extending $T$ such that $T$ is an open core of $T^*$, but every model of $T^*$ interprets a model of $T'$. If $T'$ is NIP, $T^*$ can be chosen to be NIP as well. From this we deduce the existence of an NIP expansion of the real field that has no distal expansion.
\end{abstract}

\section{Introduction}
Let $\Cal R$ be an expansion of a dense linear order $(R,<)$ without endpoints. The \textbf{open core of $\Cal R$}, denoted by $\Cal R^{\circ}$, is the structure $(R,(U))$, where $U$ ranges over all open sets of all arities definable in $\Cal R$. Miller and Speissegger introduced this notion of an open core for expansions of $(\R,<)$  in \cite{MS99}, and established sufficient conditions on $\Cal R$ such that its open core is o-minimal.  Here we want to answer the following question:
\begin{center}
\emph{Is there any restriction on what kind of structures can be interpreted in an expansion of $(R,<)$ with o-minimal open core?}
\end{center}
This question, although formulated slightly differently, was already asked by Dolich, Miller and Steinhorn in a preprint version of \cite{DMS-Indepedent}. Our answer is negative. To give a precise statement of our result, we need to recall the notion of an open core of a theory as introduced in \cite{DMS1}.
Let $T^*$ be a theory extending the theory of dense linear orders without endpoints in a language $\Cal L^*\supseteq \{ < \}$, and let $T$ be another theory in a language $\Cal L$. We say that \textbf{$T$ is an open core of $T^*$} if for every $\Cal N\models T^*$ there is $\Cal M \models T$ such that $\Cal N^{\circ}$ is interdefinable with $\Cal M$.

\begin{thmA} Let $T$ be a consistent o-minimal theory extending the theory of densely ordered
groups and let $T'$ be a consistent theory. Then there is a complete theory $T^*$ extending $T$ such that
\begin{enumerate}
\item $T^*$ interprets a model of $T'$,
\item $T$ is an open core of $T^*$,
\item $T^*$ is NIP if $T'$ is NIP,
\item $T^*$ is strongly dependent if $T'$ is strongly dependent.
\end{enumerate}
\end{thmA}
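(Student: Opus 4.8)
The plan is to exhibit a single structure whose theory witnesses all four clauses at once, extracting them from one quantifier-elimination-type analysis. Since the open core and the interpretability of $T'$ depend only on the definable sets, I would first make a harmless reduction on $T'$: replacing function symbols by their graphs and relabelling, I may assume $T'$ is relational, say with relation symbols $(R_i)_{i\in I}$ of arities $(n_i)$. Fix a model $M'\models T'$ and a sufficiently saturated $\mathcal{M}\models T$ with $|M'|$ small relative to the saturation. I would then choose a dense and codense subset $P\subseteq M$ that is generic/independent over $\emptyset$ (no nontrivial $\mathcal{M}$-definable relation holds on tuples from $P$ beyond those forced by $<$), index $P=\{a_v: v\in M'\}$, and for each $i$ add a predicate $Q_i\subseteq P^{n_i}$ coding $R_i$, i.e.\ $(a_{v_1},\dots,a_{v_{n_i}})\in Q_i \Leftrightarrow (v_1,\dots,v_{n_i})\in R_i$. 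Let $\mathcal{N}=(\mathcal{M},P,(Q_i))$ and set $T^*=\operatorname{Th}(\mathcal{N})$, so that completeness is automatic.

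Clause (1) is then immediate from the construction: in $\mathcal{N}$ the set $P$ together with the relations $Q_i\cap P^{n_i}=Q_i$ is literally a copy of $M'$, so $\mathcal{N}$, and hence (since each axiom of $T'$ is a first-order statement about this interpretation, which transfers to elementarily equivalent structures) every model of $T^*$, interprets a model of $T'$. The heart of the argument is clause (2). I would prove a relative quantifier-elimination: every $\mathcal{N}$-definable set is, up to a set with empty interior, already $\mathcal{M}$-definable. The mechanism is that the genericity of $P$ lets one move tuples from $P$ by automorphisms of $(\mathcal{M},<)$ fixing a given finite parameter set, so that any definable set with nonempty interior cannot genuinely depend on $P$ or the $Q_i$ and must therefore be $\mathcal{M}$-definable. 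Because $\mathcal{M}$ is an o-minimal expansion of an ordered group, it coincides with its own open core; consequently the definable open sets of any $\mathcal{N}^*\models T^*$ are exactly those of its $\mathcal{L}$-reduct, and the open core is interdefinable with a model of $T$. Making the genericity precise and arranging for it to be captured by a schematic axiomatization valid in all models of $T^*$ is the \textbf{main obstacle}; this is exactly where the hypothesis that $T$ extends the theory of densely ordered groups is used, since it provides the room and homogeneity needed to place $P$ densely and to move it around.

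For clauses (3) and (4) I would transfer tameness through the same quantifier-reduction. The key point is that the only new structure is carried by $P$ and the $Q_i$, whose induced structure is precisely $M'$; thus a putative independence or large-burden configuration in $\mathcal{N}^*$ splits into an $\mathcal{L}$-part, controlled because o-minimal theories are NIP (indeed dp-minimal), and an $M'$-part, controlled by the hypothesis on $T'$. Concretely, given an indiscernible sequence and a formula, I would separate its $\mathcal{L}$-content from its $P$-content and apply a Shelah-style counting: NIP of both parts forbids alternation of unbounded length, yielding NIP of $T^*$; and the burden of $T^*$ is bounded by the sum of the burdens of the two parts, yielding strong dependence when $T'$ is strongly dependent. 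The delicate balance, and the reason the construction is nontrivial, is that $P$ must simultaneously be order-generic (to kill new open sets for (2)) and carry no induced structure beyond $M'$ (so that the tameness of $T'$ alone governs (3)--(4)); securing both at once through the coding is the crux of the argument.
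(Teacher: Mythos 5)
Your construction has a fatal flaw at clause (2), and it is precisely the flaw that the paper's machinery is designed to avoid. By indexing $P$ bijectively by $M'$ and coding each $R_i$ as $Q_i\subseteq P^{n_i}$ through that bijection, you make $(P,(Q_i)_{i\in I})$ a \emph{definable} isomorphic copy of $M'$: your $\Cal N$ defines, not merely interprets, a model of $T'$. This is too strong to coexist with an o-minimal open core. Take $T'$ to be the theory of infinite discrete linear orders and run your construction over the reals (as one must be able to do to recover the paper's Theorem B, e.g.\ with $\Cal M$ an o-minimal expansion of $(\R,<,+,1)$): by Proposition \ref{prop:ocdlo}, \emph{any} expansion of $(\R,<,+)$ that defines an infinite discrete linear order fails to have o-minimal open core --- no matter how generically $P$ is placed and no matter how cleverly the bijection is chosen. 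This is exactly the content of the paper's remark that in Theorem B ``$\Cal S$ interprets a model of $T'$'' cannot be replaced by ``$\Cal S$ defines a model of $T'$''. The paper escapes this obstruction by adding an equivalence relation $E$ on $P$ whose classes are each dense in $P$ (axiom (T3)), translating $\Cal L'$-formulas by replacing equality with $E$, and realizing the model of $T'$ only as the quotient $P/E$; your construction is the degenerate case in which $E$ is equality, so that every class is a singleton --- the opposite of dense --- and (T3) fails.

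The breakdown occurs exactly at the step you flag as the main obstacle. In the paper's back-and-forth (Case I), to extend a partial isomorphism by $b\in P_{\Cal M_1}$ one must find $b''\in P_{\Cal M_2}$ realizing \emph{simultaneously} a prescribed $\Cal L'$-type and a prescribed cut over the finite configuration; this is possible because (T4) makes $\Cal L'$-types $E$-invariant (Lemma \ref{lem:sametype}) and (T3) makes each $E$-class dense, so every consistent $\Cal L'$-type is realized densely in the line. With a bijective coding, an $\Cal L'$-type over parameters can pin down a \emph{unique} element of $P$ --- for instance ``$x$ is the $\prec$-successor of $a$'' when $T'$ is a theory of discrete orders --- and that element sits in one particular cut, so no automorphism argument can decouple the $Q_i$ from the order. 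Consequently $\operatorname{Th}(\Cal N)$ records uncontrollable correlations between $<$ and the $Q_i$ (e.g.\ the successor graph and open sets built from it, such as the union of the intervals whose endpoints are successor pairs, need not be $\Cal L$-definable), and the relative quantifier elimination you assert is false. Since clauses (2), (3) and (4) all rest on that quantifier reduction, the dense-classes equivalence relation is not a technical refinement you can add later; it is the missing idea without which the approach cannot work.
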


\noindent Statements (3) and (4) of Theorem A indicate that we can choose $T^*$ in such a way that not only the open core of $T^*$ is o-minimal, but also $T^*$ remains tame in the sense of Shelah's combinatorial tameness notions. For definitions of NIP and strong dependence, we refer the reader to Simon \cite{Simon-Book}. \newline

\noindent We will deduce the following analogue for o-minimal expansions of the ordered real additive group from the proof of Theorem A.

\begin{thmB} Let $\Cal R$ be an o-minimal expansion of $(\R,<,+)$ in a language $\Cal L$ and let $T'$ be a consistent theory such that $|\Cal L|<|\R|$ and $|T'|\leq |\R|$. Then there exists an expansion $\Cal S$ of $\Cal R$ such that
\begin{enumerate}
\item $\Cal S$ interprets a model of $T'$,
\item the open core of $\Cal S$ is interdefinable with $\Cal R$,
\item $\Cal S$ is NIP if $T'$ is NIP,
\item $\Cal S$ is strongly dependent if $T'$ is strongly dependent.
\end{enumerate}
\end{thmB}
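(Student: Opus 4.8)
The plan is to carry out the construction underlying Theorem~A inside the standard structure $\Cal R$, with the two cardinality hypotheses supplying exactly the room this requires. First I would use L\"owenheim--Skolem to fix a model $\Cal N'\models T'$ with $|\Cal N'|\leq|T'|+\aleph_0\leq|\R|$. The next, and principal, step is to locate inside $\R$ the raw material on which to imprint $\Cal N'$: a subset $A\subseteq\R$ that is $\dcl$-independent in the sense of $\Cal R$ and of full cardinality $|\R|$. Such a set exists precisely because $|\Cal L|<|\R|$: for every $B\subseteq\R$ with $|B|<|\R|$ one has $|\dcl(B)|\leq|B|+|\Cal L|+\aleph_0<|\R|$, so a transfinite recursion of length $|\R|$ that at stage $\xi$ picks $a_\xi\in\R\setminus\dcl(\{a_\eta:\eta<\xi\})$ never exhausts $\R$. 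Since $\dcl$-independence already forbids $A$ from containing an interval, $A$ is automatically codense, and by meeting every open interval during the recursion I may also take $A$ dense; density and codensity are what will stop the new predicates from producing open sets.

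With $A$ in hand I would fix an injection of the universe of $\Cal N'$ onto a subset of $A$ (dense, if $\Cal N'$ is infinite) and transport the structure along it: a unary predicate marking the image of the universe, and for each symbol of the language of $T'$ a relation on $\R$ holding exactly on the images of the tuples that satisfy it in $\Cal N'$. Let $\Cal S$ be $\Cal R$ expanded by all of these; since the language of $T'$ has size $\leq|T'|\leq|\R|$ this is a legitimate expansion. By construction the induced structure on the distinguished unary predicate is a definable isomorphic copy of $\Cal N'$, so $\Cal S$ interprets a model of $T'$, which is clause~(1).

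The remaining clauses (2)--(4) I would obtain by replaying the analysis from the proof of Theorem~A, now applied directly to the concrete $\Cal S$ in place of an abstract sufficiently saturated model. The only property of the added relations used there is that they live on a dense, codense, $\dcl$-independent set, and this is exactly how $A$ was selected. Independence forces the ``open part'' of every $\Cal S$-definable set to be already $\Cal R$-definable, because the new predicates can only carve out subsets of the interior-free sets $A^{k}$; this gives that the open core of $\Cal S$ is interdefinable with $\Cal R$, which is clause~(2). The same bookkeeping that controls the alternation and VC behaviour in the proof of Theorem~A then shows that $\mathrm{Th}(\Cal S)$ is NIP, respectively strongly dependent, whenever $T'$ is, yielding clauses~(3) and~(4).

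The step I expect to be the real obstacle is not the transcription of Theorem~A but the passage from the genericity available abstractly to what $\R$ actually supplies. In a monster model one may demand that the imprinting set realise a prescribed generic type or sit inside an indiscernible sequence, whereas over the rigid, non-saturated $\R$ one has at hand only $\dcl$-independence together with density and codensity. The crux is therefore to confirm that this weaker form of genericity still drives the open-core and tameness arguments of Theorem~A; and it is here that $|\Cal L|<|\R|$ does the essential work, since it is exactly what lets $A$ be taken of cardinality $|\R|$ and hence large enough to accommodate all of $\Cal N'$.
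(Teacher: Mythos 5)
There is a genuine gap, and it lies exactly where your construction departs from the paper: in step 3 you transport the model $\Cal N'$ of $T'$ \emph{point-by-point} onto a subset of $A$, so that the induced structure on the distinguished predicate is literally isomorphic to $\Cal N'$. As you yourself note, this means $\Cal S$ \emph{defines} (not merely interprets) a model of $T'$, and the paper shows this is too much to ask: by Proposition \ref{prop:ocdlo}, an expansion of $(\R,<,+)$ whose open core is o-minimal cannot define an infinite discrete linear order. So take $T'$ to be the theory of infinite discrete linear orders (a consistent, indeed NIP, theory satisfying your cardinality hypotheses): your $\Cal S$ defines a model of $T'$, hence its open core is not o-minimal and clause (2) fails. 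No cleverness in choosing the injection or the set $A$ can repair this, because the defect is the isomorphism between the induced structure and $\Cal N'$ itself, not the topological placement of the image; density, codensity and $\dcl_T$-independence of $A$ are irrelevant once equality of $\Cal N'$ is interpreted as genuine equality of points.

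Correspondingly, your assertion that the proof of Theorem A uses only that the new relations ``live on a dense, codense, $\dcl$-independent set'' is not accurate, and this is the idea your proposal is missing. In the fusion theory $T^*$, equality of $\Cal L'$ is translated into an equivalence relation $E$ on $P$ each of whose classes is dense (axioms (T2), (T3)), and the $\Cal L'$-relations are required to be $E$-invariant (axiom (T4)); the model of $T'$ lives on the quotient $P/E$, so each of its elements is smeared into a dense subset of $\R$. This is precisely what makes Case I of the back-and-forth work (one may replace a candidate image $b'$ by an $E$-equivalent $b''$ realizing the correct cut), and hence what yields the quantifier reduction, the o-minimal open core, and the NIP/strong dependence transfer. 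The correct deduction of Theorem B, as in the paper, is: let $T^*$ be the fusion of $T=\operatorname{Th}(\Cal R)$ with $T'$ from Theorem A, and show that $\Cal R$ itself expands to a model of $T^*$ via Lemma \ref{lem:expand}. Your transfinite recursion producing a dense $\dcl_T$-independent set of cardinality $|\R|$ from the hypothesis $|\Cal L|<|\R|$ is the right ingredient for this last step, but it must then be partitioned into a family $(A_b)_{b\in B}$ of pairwise disjoint \emph{dense} sets indexed by the universe $B$ of a model of $T'$ of cardinality at most $|\R|$, with $P$ interpreted as $\bigcup_{b\in B}A_b$, $E$ as lying in the same piece, and the $\Cal L'$-relations read off from $\Cal N'$ on the $E$-classes.
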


\noindent We will deduce from work in \cite{DMS1} that an expansion of $(\R,<,+)$ has o-minimal open core if and only if it does not define a discrete linear order. Therefore in Theorem B the statement \emph{$\Cal S$ interprets a model of $T'$} cannot be replaced by the statement \emph{$\Cal S$ defines a model of $T'$}. \newline

\noindent The outline of the proof of the above results is as follows. For simplicity, let $\Cal R$ be $(\R,<,+)$ and let $T'$ be a consistent theory in a countable language $\Cal L'$ with an infinite model. Take a dense basis $P$ of $\R$ as a $\Q$-vector space. By \cite[2.25]{DMS-Indepedent} the open core of the structure $(\R,<,+,P)$ is $\Cal R$. We further expand $(\R,<,+,P)$ by a binary predicate $E$ such that $E$ is an equivalence relation on $P$, has countably many equivalence classes and each equivalence class of $E$ is dense in $P$. Now take a countable model $M$ of $T'$ and expand $(\R,<,+,P,E)$ to an expansion $\Cal S$ such that the quotient $P/E$ becomes an $\Cal L'$-structure that is isomorphic to $M$. Since each equivalence class of $E$ is dense in $P$ and hence in $\R$, we can define this \emph{fusion} $\Cal S$ of $(\R,<,+,P,E)$ and $M$ in a way that the open core of the resulting structure $\Cal S$ is still $\Cal R$. Indeed we use ideas and techniques from \cite{DMS-Indepedent} to prove a quantifier-elimination result for $\Cal S$ analogous to the one of $(\R,<,+,P)$ (see \cite[2.9]{DMS-Indepedent}), and from that deduce that the open core of $\Cal S$ is $\Cal R$.
\newline

\noindent In the special case that $\Cal L'$ is empty and $T'$ is the theory of infinite sets, the construction we outlined above gives the following extension of the results from \cite{DMS-Indepedent}.

\begin{thmC}
Let $T$ be a complete o-minimal theory extending the theory of densely ordered
groups in a language $\Cal L$, and let $\Cal L_e$ be the language $\Cal L$ augmented by a unary predicate $P$ and a binary predicate $E$.
Let $T_{e,\infty}$ be the $\Cal L_e$-theory containing $T$ and axiom schemata expressing the following statements:
\begin{itemize}
\item[(1)] $P$ is dense and $\dcl_T$-independent,
\item[(2)] $E \subseteq P^2$ is an equivalence relation on $P$,
\item[(3)] each equivalence class of $E$ is dense in $P$,
\item[(4)] $E$ has infinitely many equivalence classes.
\end{itemize}
Then $T_{e,\infty}$ is complete, and $T$ is an open core of $T_{e,\infty}$.
\end{thmC}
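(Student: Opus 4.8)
The plan is to obtain both assertions—completeness of $T_{e,\infty}$ and the identification of its open core—from a single relative quantifier-elimination result, in the spirit of \cite[2.9]{DMS-Indepedent}. First I would record that $T_{e,\infty}$ is consistent. Working inside a monster model of $T$, one selects a dense $\dcl_T$-independent subset $P$ (such sets exist and can be taken dense by a transfinite construction choosing, inside each interval, a point generic over the previously chosen ones, using that $T$ extends the theory of densely ordered groups). One then partitions $P$ into countably many pieces, each still dense, and interprets $E$ as the induced equivalence relation. This yields a model witnessing that the schemata (1)--(4) are jointly satisfiable.

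Next I would set up a back-and-forth system between two $\kappa$-saturated models $\mathcal{N}_1,\mathcal{N}_2\models T_{e,\infty}$. The objects are partial maps $f$ defined on small $\dcl_T$-closed subsets that (i) are $\mathcal{L}$-elementary on the ambient o-minimal structure, (ii) restrict to a bijection between the $P$-points of the domain and those of the image, and (iii) respect $E$ and hence the pattern of $E$-classes among the named $P$-points. The two extension lemmas are: (a) given $a\in\mathcal{N}_1$, extend $f$ so as to adjoin $a$—here o-minimality and $\mathcal{L}$-elementarity let one realize the cut of $a$ over the current domain on the other side, while $\dcl_T$-independence of $P$ guarantees that passing to the $\dcl_T$-closure creates no unexpected $P$-points; (b) given $p\in P(\mathcal{N}_1)$, extend $f$ by a $P$-point $q\in\mathcal{N}_2$ in the cut prescribed by $f$ and in the matching $E$-class. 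If $p$ is $E$-equivalent to a named point, density of that class (axiom (3)) produces $q$ in the corresponding class realizing the cut; if $p$ lies in a fresh class, the infinitude of classes (axiom (4)) together with density supplies a $q$ in a new dense class. In either case axiom (1) lets $q$ be chosen generic over the current parameters, so no $\mathcal{L}$-relations beyond its order type are introduced.

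The back-and-forth shows any two $\kappa$-saturated models of equal cardinality are isomorphic, giving completeness, and it simultaneously yields the relative quantifier elimination: every $\mathcal{L}_e$-formula is $T_{e,\infty}$-equivalent to a Boolean combination of $\mathcal{L}$-formulas and formulas constraining only the $P/E$-pattern and the order-cuts of the $P$-coordinates of the variables. For the open core I would argue that an $\mathcal{L}_e$-definable open set $X\sub R^n$ cannot genuinely depend on the $P/E$-pattern. At any point of $X$ whose coordinates are generic—outside the $\dcl_T$-closure of the finitely many $P$-parameters—the surviving $P/E$-constraints in the quantifier-eliminated description are inactive on a neighborhood, so membership there is governed purely by $\mathcal{L}$-conditions; running the back-and-forth over such generic points shows $X$ agrees with an $\mathcal{L}$-definable set off a $\dcl_T$-small (hence nowhere dense) set, forcing $X$ to be $\mathcal{L}$-definable. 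Since $T$ is o-minimal, and therefore equals its own open core, this gives that $T$ is the open core of $T_{e,\infty}$ in the sense of \cite{DMS1}.

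The main obstacle is extension lemma (b) and the accompanying bookkeeping in the quantifier elimination: one must verify that a new $P$-point can always be placed simultaneously in the correct order-cut and the correct $E$-class—existing or new—without disturbing $\dcl_T$-independence. This is exactly where the density of each individual $E$-class, rather than merely of $P$, and the infinitude of classes are indispensable. The second delicate point is the open-core argument, where one must show that the $P/E$-constraints surviving quantifier elimination cut out only a lower-dimensional, nowhere dense set, so that they are invisible to open sets; this is the step in which the $\dcl_T$-smallness of $P$ does the real work.
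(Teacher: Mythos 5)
Your completeness argument is essentially the paper's own route: the paper constructs a general fusion $T^*$ (for an arbitrary relational language $\Cal L'$ and theory $T'$), proves its completeness via exactly the back-and-forth system you describe, and obtains Theorem C by specializing to $\Cal L'=\emptyset$ and $T'$ the theory of infinite sets, checking that every model of $T_{e,\infty}$ is a model of $T^*$. One repair is needed in your extension lemma (a): an element $a\notin P_{\Cal N_1}$ may still lie in $\dcl_T(X\cup P_{\Cal N_1})$ where $X$ is the current domain (e.g.\ $a=p_1+p_2$ for unnamed $P$-points $p_1,p_2$); for such $a$ you cannot choose an image realizing the cut ``generically,'' since that would violate the requirement that the non-$P$ generators of the domain remain $\dcl_T$-independent over $P$. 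One must first adjoin the witnessing $P$-points (the paper's Case II) and reserve the cut-plus-genericity argument for $a\notin\dcl_T(X\cup P_{\Cal N_1})$, where the image must additionally be found outside $\dcl_T$ of the other side's $P$ together with its domain, using \cite[2.1]{DMS-Indepedent} and saturation. This is minor and fixable.

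The genuine gap is in the open-core step. First, your notion of genericity is wrong: the relevant set is $D_n(C)$, the tuples $\dcl_T$-independent from $C\cup P_{\Cal M}$, i.e.\ generic over the parameters \emph{and the whole predicate} $P$, not merely outside the $\dcl_T$-closure of the finitely many $P$-parameters; with your notion, ``membership is governed purely by $\Cal L$-conditions'' already fails for the formula $Px$. Second, the exceptional set is not nowhere dense: $\dcl_T(CP_{\Cal M})$ contains the dense set $P_{\Cal M}$, so it is dense and co-dense, and the $P/E$-constraints are active on a dense set of points in every neighborhood --- there is no neighborhood on which they are ``inactive.'' Third, and most importantly, even the correct type-level statement (the paper's Corollary \ref{lem:typeI}: for $y\in D_n(C)$, the type $\tp_{\Cal L^*}(y|C)$ is implied by $\tp_{\Cal L}(y|C)$ together with ``$y\in D_n(C)$'') does not by itself force an open $\Cal L_e$-definable set to be $\Cal L$-definable: two distinct open sets can have the same trace on a dense set (for instance $(0,1)\cup(1,2)$ and $(0,2)$ on any dense set avoiding $1$), so ``agrees with an $\Cal L$-definable set on the generics'' does not pin $X$ down. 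Converting the type statement into the open-core conclusion is the actual content of Boxall--Hieronymi \cite[Corollary 3.1]{BoxallH}, which the paper invokes in Theorem \ref{thm:opencoreTstar}; besides density of $D_n(C)$ and the type implication, one must verify that every $\Cal L$-type over $C$ realized in an open set $U$ is realized in $U\cap D_n(C)$ (condition (2) there, deduced from o-minimality). Your sketch contains no substitute for this machinery, so as written the open-core half of Theorem C is not proved.
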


\noindent Theorem B should be compared to Friedman and Miller \cite[Theorem A]{FM-Sparse}. Among other things, the latter result implies the existence of an expansion of the real field that defines a model of first-order arithmetic, but every subset of $\R$ definable in this expansion is a finite union of an open set and finitely many discrete sets. Therefore both our result and \cite{FM-Sparse} describe situations in which topological tameness exists without model-theoretic tameness.\newline

\noindent In general our results rule out that the property of having an o-minimal open core has any consequences in terms of model-theoretic tameness of the whole structure. At first glance this might look like a disappointing result. However, we do not share this viewpoint. We regard our results as further evidence that in model-theoretically wild situations geometric tameness can often prevail. In some of those situations the open core of a structure or theory seems to be the right tool that can capture precisely this tameness, making certain phenomena trackable by model-theoretic analysis.\newline

\noindent Theorem B(3) has a few interesting corollaries about NIP expansions of $(\R,<,+)$. First of all, it states that for every NIP theory $T'$ of cardinality at most continuum there is an NIP expansion of $(\R,<,+)$ that interprets a model of $T'$. Therefore the model theory of NIP expansions of $(\R,<,+)$ is in general as complicated as the model theory of arbitrary NIP theories.  We use this observation to deduce a new result about the distality of NIP expansions of $(\R,<,+)$. The notion of distality was introduced by Simon in \cite{Simon-Distal} to single out those NIP theories and structures that can be considered purely unstable. While every o-minimal expansions of $(\R,<,+)$ is distal, there are several natural examples of non-distal NIP expansions of $(\R,<,+)$ (see \cite{HN-Distal}). However, by Chernikov and Starchenko \cite{CS-RegDis} even just having a distal expansion guarantees certain desirable combinatorial properties of definable sets (the strong Erd\"os-Hajnal property). Therefore it is interesting to know whether or not all NIP expansions of $(\R,<,+)$ have a distal expansion. Although we do not know it, we expect all examples of non-distal NIP expansions of $(\R,<,+)$ produced in \cite{HN-Distal} to have distal expansions. So far the only known NIP theory without an distal expansion is the theory of algebraically closed fields of characteristic $p$ by \cite[Proposition 6.2]{CS-RegDis}. Combining this with Theorem B, we almost immediately obtain the following.

\begin{thmD} There is an NIP expansion of $(\R,<,+)$ that does not have a distal expansion.
\end{thmD}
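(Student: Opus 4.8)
The plan is to combine Theorem B(3) with the known fact, due to Chernikov and Starchenko \cite[Proposition 6.2]{CS-RegDis}, that the theory of algebraically closed fields of a fixed characteristic $p$ is NIP but admits no distal expansion. Concretely, let $T'$ be the theory $\textup{ACF}_p$ of algebraically closed fields of characteristic $p$ in the language of rings. This is a complete theory in a countable language, so $|T'| \leq |\R|$, and it is NIP. Taking $\Cal R = (\R,<,+)$ itself (so $\Cal L$ is finite and certainly $|\Cal L| < |\R|$), Theorem B produces an expansion $\Cal S$ of $(\R,<,+)$ that interprets a model of $T'$ and, by clause (3), is NIP. This $\Cal S$ is the desired expansion, and it remains only to verify it has no distal expansion.

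The key step is to propagate non-distality from the interpreted copy of $\textup{ACF}_p$ up to $\Cal S$. I would argue that if $\Cal S$ had a distal expansion, then so would the interpreted model of $T'$, contradicting \cite[Proposition 6.2]{CS-RegDis}. For this I would rely on two stability-theoretic transfer principles: first, that distality passes to interpreted (or more generally definable) structures, so that a distal expansion of $\Cal S$ would induce a distal expansion of any structure $\Cal S$ interprets; and second, that having a distal expansion is preserved under such interpretation. The main obstacle is making this transfer precise: one must check that an expansion of $\Cal S$ restricts to a genuine expansion of the interpreted algebraically closed field in a way that preserves distality through the interpretation. This is where the phrase \emph{almost immediately} in the excerpt hides the only real content of the proof.

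More carefully, suppose toward a contradiction that $\Cal S$ has a distal expansion $\Cal S^+$. Since $\Cal S$ interprets a model $\Cal F \models \textup{ACF}_p$, the interpretation is witnessed by definable sets and definable equivalence relations in $\Cal S$, hence in $\Cal S^+$, so $\Cal S^+$ also interprets $\Cal F$. The structure induced on $\Cal F$ by all sets definable in the distal structure $\Cal S^+$ is then a distal expansion of $\Cal F$: distality is inherited by structures interpreted in a distal structure, as the defining combinatorial property of distal cell decompositions is inherited by definable families. This yields a distal expansion of $\Cal F$, an algebraically closed field of characteristic $p$, contradicting \cite[Proposition 6.2]{CS-RegDis}. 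Therefore $\Cal S$ has no distal expansion, completing the proof.

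I expect the sole delicate point to be the inheritance of distality under interpretation, since distality is usually phrased for the structure itself rather than for induced structures on imaginary sorts; one needs to invoke (or note) that distality is preserved in $\Cal S^{\mathrm{eq}}$ and restricts to the relevant interpreted sort. Granting that standard fact, the deduction of Theorem D is immediate from Theorem B(3) and the Chernikov--Starchenko result.
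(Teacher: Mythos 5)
Your proposal is correct and follows essentially the same route as the paper: take $T'=\textup{ACF}_p$, apply the construction (Theorem B(3) / the theory $T^*$) to get an NIP expansion interpreting a model of $\textup{ACF}_p$, and derive a contradiction from a hypothetical distal expansion by passing to $^{\mathrm{eq}}$, where distality is preserved (the paper cites the remark after Definition 9.17 in Simon's book, exactly the fact you flag as the one delicate point), so that the interpreted algebraically closed field would acquire a distal expansion, contradicting Chernikov--Starchenko \cite[Proposition 6.2]{CS-RegDis}.
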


\noindent This is also the first example of an NIP expansion of any densely ordered set that does not have a distal expansion. \newline

\noindent While in general for every countable NIP theory there is an expansion of $(\R,<,+)$ that interprets a model of this theory, there is a natural class of expansions of $(\R,<,+)$ in which models of certain NIP theories in countable languages cannot be interpreted. A set $X\subseteq \R$ is somewhere dense and co-dense if there is an open interval $I$ such that $X\cap I$ is dense and co-dense in $I$. We say an expansion of $(\R,<)$ is \textbf{noiseless} if it does not define a somewhere dense and co-dense subset of $\R$.\footnote{The name \emph{noiseless} was suggested by Chris Miller. Being noiseless is equivalent to the statement that every definable subset of $\R$ either has interior or is nowhere dense. The latter condition has also been called \emph{i-minimality} by Fornasiero \cite{F-tameopen}.} The expansion $\Cal S$ we produce for Theorem B is not noiseless. It is therefore natural to ask whether in Theorem B we can require $\Cal S$ to be noiseless. The answer to this question is negative.

\begin{thmE} Let $\Cal R$ be a noiseless NIP expansion of $(\R,<,+,1)$. Then $\Cal R$ has definable choice, that is: for $A\subseteq \R^{m}\times \R^n$ $\emptyset$-definable in $\Cal R$ there is an $\emptyset$-definable function $f: \pi(A) \to \R^n$ such that
\begin{enumerate}
\item $\gr(f)\subseteq A$,
\item $f(a) = f(b)$ whenever $a,b\in \pi(A)$ and $A_a=A_b$,
\end{enumerate}
where $\pi : \R^{m+n} \to \R^m$ is the projection onto the first $m$ coordinates.
\end{thmE}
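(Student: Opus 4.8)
The plan is to establish definable choice for a noiseless NIP expansion $\Cal R$ of $(\R,<,+,1)$ by exploiting the interaction between the order structure, the group structure, and the strong topological tameness that noiselessness provides. The starting observation is that, since $\Cal R$ is noiseless, every $\emptyset$-definable subset of $\R$ either has interior or is nowhere dense; I would first argue that this dichotomy, combined with NIP, forces the definable unary sets to be rather structured — in particular I expect to show that the open core of $\Cal R$ is o-minimal (so that after removing finitely many points, definable subsets of $\R$ are finite unions of points and open intervals up to nowhere dense error). The key point is to leverage the ordered group structure: in an ordered group one can canonically select a point from any nonempty definable set that is bounded with an infimum, or more generally from any interval, by taking endpoints and midpoints via the group operation and the constant $1$.

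\emph{First} I would reduce the general fiber-choice problem to the case $n=1$, by the standard inductive argument: if definable choice holds for $n=1$ it propagates to all $n$ by choosing coordinates one at a time, using that the family $\{A_a\}$ is uniformly definable. \emph{Second}, for $n=1$ the task is to select, uniformly and definably in the parameter $a$, a point from each fiber $A_a \subseteq \R$. Here the plan is to use noiselessness to classify each fiber: by the i-minimality dichotomy each fiber either has nonempty interior or is nowhere dense. For fibers with interior, I would select a canonical point using the ordered group operations — for instance the midpoint of a canonically chosen component interval, located using infima and suprema of the boundary together with translation by $1$ to handle unbounded components. For nowhere dense fibers the selection requires more care, and this is where NIP enters: one wants the nowhere dense fibers to be \emph{finite} (or at least to admit a definable point), which should follow by combining noiselessness with NIP to rule out infinite nowhere dense definable sets carrying too much combinatorial complexity.

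The main obstacle, as I see it, is precisely the treatment of nowhere dense fibers: noiselessness tells us a definable unary set without interior is nowhere dense, but nowhere dense definable sets can still be infinite and lack an obvious canonical point, so the crux is to show that in the NIP setting these sets are tame enough — ideally that every nowhere dense $\emptyset$-definable subset of $\R$ is finite, or has a definable boundary point one can canonically extract. I would attack this by examining the trace of the definable structure on such a set: if an infinite nowhere dense definable set existed with a definable order of the appropriate type, one could hope to recover a discrete definable linear order or an independence configuration contradicting NIP. \emph{Finally}, once each fiber yields a canonical point in a uniformly definable way — respecting the requirement that $A_a = A_b$ forces the same choice, which is automatic since the selection is defined purely from the set-theoretic data of the fiber — the function $f$ assembling these choices is $\emptyset$-definable, its graph lies in $A$ by construction, and conditions (1) and (2) follow immediately.
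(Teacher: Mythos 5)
Your overall architecture matches the paper's proof of Proposition~\ref{prop:skolem}: reduce to $n=1$, split each fiber $A_x$ into its interior part and its empty-interior part, and select from the interior part via midpoints of connected components, translating $\sup$/$\inf$ by $\pm 1$ for unbounded components. The genuine gap is exactly at the point you call the main obstacle, and each of the three ideas you propose for it fails. (a) The open core of a noiseless NIP expansion of $(\R,<,+,1)$ need \emph{not} be o-minimal: $(\R,<,+,\Z)$ is NIP (via Weispfenning's quantifier elimination for mixed real-integer linear arithmetic, all atomic formulas being linear inequalities and congruences) and d-minimal, hence noiseless, yet $\Z$ is closed, infinite and discrete, so it lies in the open core; the same goes for $(\R,<,+,\cdot,2^{\Z})$. (b) For the same reason, empty-interior fibers need not be finite, and NIP cannot ``rule out infinite nowhere dense definable sets'': $\Z$ is such a set in a noiseless NIP structure. (c) A definable discrete linear order yields no contradiction with NIP --- Presburger arithmetic is NIP; in this paper defining a discrete order contradicts having an o-minimal \emph{open core} (Proposition~\ref{prop:ocdlo}), not NIP, and by (a) noiseless NIP structures do define such orders. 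So the strategy you propose for the nowhere dense fibers cannot be completed.

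The input the paper actually uses here, and which your proposal is missing, is the external theorem \cite[Theorem B]{HW-Monadic}: no NTP$_2$ (in particular no NIP) expansion of $(\R,<,+)$ defines a Cantor set, i.e.\ a nonempty compact subset of $\R$ with neither interior nor isolated points. This is the \emph{only} use of NIP in the whole proof: combined with noiselessness it gives Lemma~\ref{lem:isolatedpoint}, that every nonempty definable $X\subseteq\R$ with empty interior has an isolated point (noiselessness forces $\cl(X)$ to have empty interior, and the Cantor set theorem then forces $\cl(X)$ to have an isolated point). Even granting this, there is a uniformity problem your sketch does not address: isolated points are not canonical, and a fiber such as $\{1/n \ : \ n\geq 1\}$ consists entirely of isolated points yet has no least element. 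The paper's Lemma~\ref{lem:nowheredense} resolves this with an isolation-scale trick: let $g(x)$ be the supremum of all $r>0$ such that $(y-r,y+r)\cap C_x=\{y\}$ for some $y\in C_x$ (and $g(x)=1$ if this supremum does not exist), and let $D_x$ be the set of $y\in C_x$ isolated at scale $g(x)/2$; any two points of $D_x$ are then at distance at least $g(x)/2$, so $D_x$ is nonempty, closed and discrete, and one can canonically take its least nonnegative or greatest negative element. Since all choices depend only on the set-theoretic data of the fiber, condition (2) is automatic, as you note; but without the Cantor-set theorem and this construction, the crux of the proof is absent.
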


\noindent It follows from Theorem E that if a noiseless NIP expansion $\Cal R$ of $(\R,<,+,1)$ interprets a structure $\Cal M$, then $\Cal R$ defines an isomorphic copy of $\Cal M$. We will prove Theorem E in greater generality. In particular, Theorem E not only holds for noiseless NIP expansions, but also for noiseless NTP$_2$ expansions (for a definition of NTP$_2$ see \cite{Simon-Book}).\newline

\noindent We now show that Theorem B fails when we require $\Cal S$ to be noiseless. Let $p$ be a prime and $\mathbb{F}_p$ be the field with $p$ elements.
By Shelah and Simon \cite[Theorem 2.1]{shelahsimon} if $\Cal V = (V, +, \ldots)$ is an infinite $\mathbb{F}_p$-vector space and $\prec$ is a linear order on $V$, then $(\Cal V, \prec)$ has IP.
Suppose now that $\Cal M = (M, <, \ldots)$ is an expansion of an infinite linear order $(M,<)$ and that $\cal V$ is an $\Cal M$-definable infinite $\mathbb{F}_{p}$-vector space with underlying set $V \subseteq M^k$.
The lexicographic order on $M^k$ induced by $<$ is linear and induces a linear order on $V$.
It follows that $\Cal M$ has IP.
Thus no NIP expansion of a linear order defines an infinite $\mathbb{F}_p$-vector space.
By Theorem E no noiseless NIP expansion of $(\R,<,+,1)$ interprets an infinite vector space over a finite field.

\subsection*{Open questions} We end the introduction with a few open questions.\newline

\noindent \textbf{1.} We work here in the context of ordered structures and o-minimal open core. It is likely that our techniques can be used to extend our results to various other settings. In particular, by using the technology from Berenstein and Vassiliev \cite{BV-Selecta} rather than from \cite{DMS-Indepedent} one should be able produce analogues of Theorem A and B for other geometric structures such as the field of $p$-adic numbers.\newline

\noindent \textbf{2.} Similar questions can be asked about NIP expansions of $(\N,<)$. Since every such expansion has definable Skolem functions, we again have some limitations on what kind of theories can be interpreted in such a structure. Can we say anything more? For example: can an NIP expansion of $(\N,<)$ interpret an infinite field? Is there an NIP expansion of $(\N,<)$ that does not admit a distal expansion?\newline

\noindent \textbf{3.} Is there a noiseless NIP expansion of $(\R,<,+)$ that does not admit a distal expansion? Is every infinite field interpretable in a noiseless NIP expansion isomorphic to $(\R,+,\cdot)$ or $(\mathbb{C}, +, \cdot)$?\newline

\noindent The previous question is even open for d-minimal NIP expansions, a subclass of the class of noiseless NIP expansions (see \cite{Miller-tame} for a definition of d-minimality). It follows from Fornasiero \cite[Theorem 4.13]{F-dmingroup} that any uncountable field interpretable in a d-minimal expansion is isomorphic to $(\R,+,\cdot)$ or $(\mathbb{C},+,\cdot)$.
Thus in this setting it suffices to show that no d-minimal NIP expansion interprets a countable field.
It is not difficult to show that any countable set definable in a d-minimal expansion admits a definable order with order type $\omega$.
Thus, if the above question about the interpretability of infinite fields in NIP expansions of $(\N,<)$ has a negative answer, then any infinite field interpretable in a d-minimal NIP expansion of $(\R,<,+)$ is isomorphic to $(\R,+,\cdot)$ or $(\mathbb{C},+,\cdot)$.\newline

\noindent Is every noiseless NIP expansion of $(\R,<,+)$ d-minimal? We doubt that this statement is true, but it seems difficult to produce a counterexample.

\subsection*{Acknowledgments} The authors thank Antongiulio Fornasiero and Chris Miller for helpful conversations around the topic of this paper.

\subsection*{Notation}  We will use $m,n$ for natural numbers and $\kappa$ for a cardinal. Let $X,Y$ be sets. We denote the cardinality of $X$ by $|X|$. For a function $f: X\to Y$, we denote the graph of $f$ by $\gr(f)$. If $Z\subseteq X \times Y$ and $x\in X$, then $Z_x$ denotes the set $\{ y\in Y \ : \ (x,y) \in Z\}$. If $a=(a_1,\dots,a_n)$, we sometimes write $Xa$ for $X\cup \{a_1,\dots,a_n\}$, and $XY$ for $X\cup Y$.\newline
Let $\Cal L$ be a language and $T$ an $\Cal L$-theory. Let $M \models T$ and $A\subseteq M$. In this situation, $\Cal L$-definable always means $\Cal L$-definable with parameters. If we want to be precise about the parameters we write $\Cal L$-$A$-definable to indicate $\Cal L$-definability with parameters from $A$. Let $b\in M^n$. Then we write $\tp_{\Cal L}(b|A)$ for the $\Cal L$-type of $b$ over $A$. Moreover, $\dcl_T(A)$ denotes the definable closure of $A$ in $M$. Whenever $T$ is o-minimal, $\dcl_T$ is a pregeometry.

\section{The fusion}\label{Section:Fusion}

Let $T$ be a consistent o-minimal theory extending the theory of densely ordered
groups with a distinguished positive element, and let $\Cal L$ be its language.  Let $\Cal L'$ be a relational language disjoint from $\Cal L$, and let $T'$ be a consistent $\Cal L'$-theory. In this section we will construct a language $\Cal L^* \supseteq \Cal L$ and a complete $\Cal L^*$-theory $T^*$ extending $T$ such that $T$ is an open core of $T^*$ and $T^*$ interprets $T'$.  In Section \ref{Section:NIP} we show that $T^*$ is NIP whenever $T$ is, and in Section \ref{Section:STDEP} we prove that strong dependence of $T'$ implies strong dependence of $T^*$.\newline

\noindent By replacing $T$ by a completion of $T$ and $T'$ by a completion of $T'$, we can directly reduce to the case that both $T$ and $T'$ are complete. So from now, we assume that $T$ and $T'$ are complete.\newline

\noindent Let $\Cal L_e$ be $\Cal L$ expanded by a unary predicate $P$ and a binary predicate $E$ such that neither $P$ nor $E$ are not in $\Cal L'$. Let $T_{e}$ be the extension of $T$ by axiom schemata expressing the following statements:
\begin{itemize}
\item[(T1)] $P$ is dense and $\dcl_T$-independent,
\item[(T2)] $E \subseteq P^2$ is an equivalence relation on $P$,
\item[(T3)] each equivalence class of $E$ is dense in $P$.
\end{itemize}

\noindent Let $\Cal L^* = \Cal L_e \cup \Cal L'$. For a given $\Cal L'$-formula $\theta$ we define a $\Cal L^*$-formula $\theta_e$ recursively as follow:
\begin{align*}
&\hbox{if $\theta$ is $x=y$, then define $\theta_e$ as $Exy$,}\\
&\hbox{if $\theta$ is $Rx_1\dots x_n$ where $R$ is an $n$-ary predicate in $\Cal L'$, then define $\theta_e$ as $Rx_1\dots x_n$,}\\
&\hbox{if $\theta$ is $\neg\theta'$, then define $\theta_e$ as $\neg \theta'_e$,} \\
&\hbox{if $\theta$ is $\theta'\wedge \theta''$, then define $\theta_e$ as $\theta'_e\wedge \theta''_e$,}   \\ \displaybreak
&\hbox{if $\theta$ is $\theta'\vee \theta''$, then define $\theta_e$ as $\theta'_e\vee \theta''_e$,} \\
&\hbox{if $\theta$ is $\exists x \theta'$, then define $\theta_e$ as $\exists x (Px \wedge \theta'_e),$}\\
&\hbox{if $\theta$ is $\forall x \theta'$, then define $\theta_e$ as $\forall x (Px \rightarrow \theta'_e).$}
\end{align*}

\noindent Let $T^*$ be the extension of $T_e$ by the following axiom schemata:
\begin{itemize}
\item[(T4)] $R\subseteq P^n$ and
\[
\forall x_1 \forall y_1\dots \forall x_n \forall y_n \ \left( \bigwedge_{i=1}^n Ex_iy_i \right) \rightarrow \Big(Rx_1\dots x_n \leftrightarrow Ry_1\dots y_n\Big)
\]
for every $R \in \Cal L'$ with $\operatorname{ar}(R)=n$,
\item [(T5)] $\varphi_e$ for every $\varphi \in T'$.
\end{itemize}

\vspace{0.2cm}

\noindent We now fix some further notation. Given a model $\Cal M$ of  $T^*$, we will denote the underlying model of $T$ by $M$, the interpretation of $P$ and $E$ by $P_{\Cal M}$ and $E_{\mathcal M}$. \newline
For $b\in P_{\Cal M}^n$ and $A\subseteq P_{\Cal M}$ we denote by $\tp_{\Cal L'}(b|A)$ the set of all $\Cal L^*$-formulas of the form $\varphi_e(x,a)$ for some $\Cal L'$-formula $\varphi(x,y)$ such that $a \in A^m$ and $\Cal M \models \varphi_e(b,a)$.\newline

\noindent A standard induction on $\Cal L'$-formulas together with Axiom (T4) gives the following.

\begin{lem}\label{lem:sametype} Let $\Cal M\models T^*$, $a,b \in P_{\Cal M}$ and $A\subseteq P_{\Cal M}$ . If $(a,b) \in E_{\Cal M}$, then $\tp_{\Cal L'}(a|A)=\tp_{\Cal L'}(b|A)$.
\end{lem}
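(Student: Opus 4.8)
The plan is to prove Lemma \ref{lem:sametype} by induction on the structure of the $\Cal L'$-formula $\varphi$, showing simultaneously for all such $\varphi$ that whenever $(a,b)\in E_{\Cal M}$ we have $\Cal M \models \varphi_e(a,c) \leftrightarrow \varphi_e(b,c)$ for every tuple $c$ from $A$. Since $\tp_{\Cal L'}(a|A)$ and $\tp_{\Cal L'}(b|A)$ are by definition the sets of $\Cal L^*$-formulas $\varphi_e(x,c)$ satisfied by $a$ and $b$ respectively, establishing this equivalence uniformly in $\varphi$ immediately yields the equality of the two types. I would state the induction as a claim quantified over the free variable $x$ being substituted by the $E$-related pair, and note at the outset that everything takes place inside $P_{\Cal M}$, since $a,b,c$ all lie in $P_{\Cal M}$ and the relativised quantifiers in the recursive definition of $\theta_e$ guarantee that only elements of $P_{\Cal M}$ are ever quantified over.

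The base cases are where the content lies. If $\varphi$ is an atomic formula $Rx_1\dots x_n$ with $R\in\Cal L'$, then $\varphi_e = \varphi$, and the desired equivalence is precisely the instance of Axiom (T4) obtained by keeping all coordinates fixed except the one occupied by $x$, using $(a,b)\in E_{\Cal M}$ in that coordinate and reflexivity of $E$ (from (T2)) in the others. If $\varphi$ is the equality $x=y$, then $\varphi_e$ is $Exy$, and the claim reduces to the statement that $E$-classes are well-defined, i.e.\ that $(a,c)\in E_{\Cal M} \leftrightarrow (b,c)\in E_{\Cal M}$ whenever $(a,b)\in E_{\Cal M}$; this follows directly from symmetry and transitivity of $E$ guaranteed by (T2).

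The inductive steps are routine because the recursive definition of $\theta_e$ commutes with the Boolean connectives: for $\neg\theta'$, $\theta'\wedge\theta''$ and $\theta'\vee\theta''$ the equivalence for the compound formula follows formally from the equivalences for the components by propositional logic. The quantifier cases $\exists x'\,\theta'$ and $\forall x'\,\theta'$ translate under $e$ to $\exists x'(Px'\wedge\theta'_e)$ and $\forall x'(Px'\rightarrow\theta'_e)$; here I apply the induction hypothesis to $\theta'$ for each witness $x'$ ranging over $P_{\Cal M}$, observing that the bound variable $x'$ is distinct from the variable carrying the $E$-related pair, so the substitution of $a$ versus $b$ commutes with the quantification. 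The main obstacle, if any, is purely bookkeeping: one must be careful that the induction hypothesis is stated strongly enough to cover substitution in an arbitrary coordinate (not just a designated one), since under a quantifier the roles of free variables shift. I would handle this by formulating the inductive claim for substitution of an $E$-related pair into any single free variable, with the remaining free variables held fixed as parameters from $P_{\Cal M}$; with that formulation the induction goes through cleanly and the lemma follows.
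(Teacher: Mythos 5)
Your proposal is correct and is precisely the argument the paper has in mind: the paper's proof consists of the single remark that ``a standard induction on $\Cal L'$-formulas together with Axiom (T4)'' yields the lemma, and your write-up fills in exactly that induction, with the right base cases (Axiom (T4) for relation symbols, and the equivalence-relation axioms (T2) for the translated equality $Exy$) and the right strengthening of the inductive hypothesis to handle substitution into an arbitrary free variable under the relativised quantifiers. No gaps; nothing further needed.
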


\noindent We now show that given a model of $T$ with enough $\dcl_T$-independent elements, this model can be expanded to a model of $T^*$. This result will be used to show consistency of $T^*$.

\begin{lem}\label{lem:expand} Let $M\models T$ and let $(A_b)_{b\in B}$ be a family of dense subsets of $M$ such that
\begin{itemize}
\item $\bigcup_{b\in B} A_b$ is $\dcl_T$-independent,
\item $A_b \cap A_{b'} = \emptyset$ whenever $b\neq b'$,
\item there is a model of $T'$ with the same cardinality as $B$.
\end{itemize}
Then $M$ can be expanded to a model of $T^*$.
\end{lem}
\begin{proof}
Let $\Cal N$ be a model of $T'$ with the same cardinality as $B$. Without loss of generality, we can assume that $B$ is the universe of $\Cal N$.
We now expand $M$ to an $\Cal L^*$-structure $\Cal M$. We interpret the relation symbol $P$ as $P_{\mathcal M}:= \bigcup_{b\in B} A_b$. For $a,a'\in P_{\Cal M}$ we say $a E_{\Cal M} a'$ if and only if there is $b\in B$ such that $a,a'\in A_b$.
It is clear that $E_{\Cal M}$ is an equivalence relation on $P_{\Cal M}$ and that every equivalence class of $E_{\Cal M}$ is dense in $M$. Thus $(M,P_{\mathcal M},E_{\Cal M})$ is an $\Cal L_e$-structure that models $T_e$. It is left to interpret the elements of $\Cal L'$. Let $R$ be an $n$-ary relation symbol in $\Cal L'$. We define its interpretation $R_{\Cal M}$ by
\[
\{ (a_1,\dots,a_n) \in P_{\Cal M}^n  \ : \ \exists b_1,\dots,b_n \in B \  \bigwedge_{i=1}^n a_i \in A_{b_i} \wedge \Cal N \models R(b_1,\dots b_n)\}.
\]
Let $\Cal M := (M,P_{\Cal M},E_{\Cal M}, \big(R_{\Cal M}\big)_{R\in \Cal L'})$. It is clear from the definition of $E_{\Cal M}$ and  $R_{\Cal M}$ that $\Cal M$ satisfies (T4). By a straightforward induction on formulas we  see that for every $\Cal L'$-formula $\varphi(x)$ and for every $a_1,\dots,a_n \in P_{\Cal M}$ and $b_1,\dots,b_n\in B$ with $a_i \in A_{b_i}$ we have
\[
\Cal M \models \varphi_e(a_1,\dots,a_n) \hbox{ if and only if } \Cal N \models \varphi(b_1,\dots,b_n).
\]
Thus $\Cal M$ satisfies (T5), and therefore $\Cal M\models T^*$.

\end{proof}

\begin{prop}\label{prop:consistent} The theory $T^*$ is consistent.
\end{prop}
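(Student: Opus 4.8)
The plan is to deduce consistency of $T^*$ directly from Lemma \ref{lem:expand} by exhibiting a single model of $T$ carrying a family $(A_b)_{b\in B}$ of dense, pairwise disjoint subsets whose union is $\dcl_T$-independent, with $|B|$ equal to the cardinality of some model of $T'$. Since $T'$ is assumed consistent, fix a model $\Cal N \models T'$ and let $\kappa = |\Cal N|$; then it suffices to produce $M \models T$ together with a partition of a $\dcl_T$-independent dense set into $\kappa$ dense pieces indexed by $B$ with $|B| = \kappa$. Once such data is in hand, Lemma \ref{lem:expand} immediately yields an expansion $\Cal M \models T^*$, establishing consistency.

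\noindent First I would secure a sufficiently large model. Because $T$ is o-minimal extending the theory of densely ordered groups, $\dcl_T$ is a pregeometry, and in a model of large enough cardinality one can find a $\dcl_T$-independent set of any prescribed size up to $|M|$. Concretely, take $M \models T$ of cardinality $\geq \kappa + |\Cal L| + \aleph_0$, large enough that $M$ contains a $\dcl_T$-independent subset $I$ of cardinality $\kappa$; a Löwenheim--Skolem or saturation argument, or simply passing to a sufficiently large model, guarantees the existence of such $I$. The key extra requirement is \emph{density}: I would arrange not merely one dense $\dcl_T$-independent set, but a partition of a dense $\dcl_T$-independent set into $\kappa$ many dense blocks.

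\noindent The main technical point, and the step I expect to be the principal obstacle, is simultaneously achieving density of \emph{each} block $A_b$ while keeping the entire union $\bigcup_{b\in B} A_b$ $\dcl_T$-independent. The standard device is a transfinite back-and-forth construction over a suitable dense base model: work inside a monster or a highly saturated $M \models T$, enumerate the countably-or-more many basic open intervals (a basis for the order topology, of size $\leq |M|$), and build the sets $A_b$ in stages, at each stage adding to the appropriate block a new point lying in a designated interval. At each step one selects the new point from the relevant interval to be $\dcl_T$-independent over everything chosen so far; this is possible precisely because $\dcl_T(X)$ of a set $X$ smaller than the interval cannot contain a whole nonempty open interval. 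Here o-minimality is essential: the $\dcl_T$-closure of a small parameter set is a finite union of points and intervals whose endpoints lie in that closure, so in a model of sufficient saturation it omits points of any prescribed open interval, allowing independent dense selection. Interleaving the intervals against the index $b \in B$ by a bookkeeping function ensures each $A_b$ meets every interval, hence is dense, while the independence is preserved by construction throughout.

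\noindent Thus the outline is: \textbf{(i)} fix $\Cal N \models T'$ and set $\kappa = |\Cal N|$, $B = \Cal N$; \textbf{(ii)} pass to a sufficiently large and saturated $M \models T$; \textbf{(iii)} by a transfinite back-and-forth selection, using o-minimality to pick $\dcl_T$-independent points in prescribed intervals, construct pairwise disjoint dense $A_b$ ($b \in B$) with $\bigcup_b A_b$ being $\dcl_T$-independent; \textbf{(iv)} invoke Lemma \ref{lem:expand} to expand $M$ to a model of $T^*$. Since $T^*$ has a model, it is consistent.
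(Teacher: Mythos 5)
Your proposal follows the same overall route as the paper: both reduce Proposition \ref{prop:consistent} to Lemma \ref{lem:expand}, so everything comes down to producing one model of $T$ carrying pairwise disjoint dense sets $(A_b)_{b\in B}$ with $\dcl_T$-independent union, where $|B|$ is the cardinality of some model of $T'$. The difference is in how that model-with-family is obtained: the paper gets it in one line by citing \cite[1.11]{DMS-Indepedent}, while you rebuild it by a transfinite bookkeeping construction inside a saturated model. Your construction is essentially the standard argument underlying the cited fact, so the approach is legitimate and more self-contained; what the citation buys the paper is brevity and freedom from the set-theoretic care that your version requires.

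Two points in your sketch need repair, though neither is fatal. First, the assertion that ``the $\dcl_T$-closure of a small parameter set is a finite union of points and intervals whose endpoints lie in that closure'' is false: $\dcl_T(X)$ is not in general a definable set, and already for finite $X$ it is infinite (typically dense). What actually powers the selection step is the cardinality bound $|\dcl_T(X)|\leq |X|+|\Cal L|+\aleph_0$, which you also invoke, so that $\dcl_T(X)$ cannot exhaust an interval of strictly larger cardinality; o-minimality enters only through exchange, which upgrades ``each new point avoids the $\dcl_T$-closure of its predecessors'' to ``the union is $\dcl_T$-independent.'' Second, ``sufficiently large and saturated'' must be sharpened to: saturated in its own cardinality, equivalently, every nonempty open interval of $M$ has cardinality $|M|$. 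Density of each $A_b$ forces the recursion to run through all $|M|$ many intervals of $M$, so at late stages the set of already chosen points can be as large as an interval; a model that is merely $\kappa^+$-saturated but of cardinality greater than $\kappa^+$ can have intervals of cardinality only $\kappa^+$ (lexicographic sums give examples), and there the construction stalls. A genuine monster model works, and a ZFC-safe substitute is the union of an elementary chain of length $\kappa^+$ in which each successor model realizes a new point inside every interval of its predecessor.
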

\begin{proof}
 By \cite[1.11]{DMS-Indepedent} there is a model $M$ of $T$ and a family $(A_b)_{b\in B}$ of dense subsets of $M$ such that the family $(A_b)_{b \in B}$ satisfies the assumptions of Lemma \ref{lem:expand}. The statement of the proposition then follows from Lemma \ref{lem:expand}.
\end{proof}

\begin{prop}\label{prop:interpret} Every model of $T^*$ interprets a model of $T'$.
\end{prop}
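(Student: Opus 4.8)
The plan is to show that for any $\Cal M \models T^*$, the quotient structure $P_{\Cal M}/E_{\Cal M}$, equipped with the $\Cal L'$-structure induced by the relations $R_{\Cal M}$, is a model of $T'$, and that this quotient is interpretable in $\Cal M$. The interpretation is the obvious one: the domain is the $\emptyset$-definable set $P_{\Cal M}$, the equivalence relation witnessing the quotient is the $\emptyset$-definable relation $E_{\Cal M}$, and each $n$-ary symbol $R \in \Cal L'$ is interpreted by the $\emptyset$-definable set $R_{\Cal M} \subseteq P_{\Cal M}^n$. All the defining data are $\emptyset$-definable in $\Cal M$, so this genuinely is an interpretation in the model-theoretic sense.

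First I would verify that this data defines a legitimate $\Cal L'$-structure on the quotient. The domain $P_{\Cal M}/E_{\Cal M}$ is nonempty since $T_e$ forces $P$ to be dense (hence nonempty) and $E$ to be an equivalence relation on it. Axiom (T4) is exactly what guarantees that each $R_{\Cal M}$ is $E_{\Cal M}$-invariant, so it descends to a well-defined $n$-ary relation $\bar{R}$ on the quotient $P_{\Cal M}/E_{\Cal M}$; this is the content that makes the induced structure well-defined rather than dependent on choice of representatives. Call the resulting $\Cal L'$-structure $\Cal N := (P_{\Cal M}/E_{\Cal M}, (\bar{R})_{R \in \Cal L'})$.

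Next I would prove that $\Cal N \models T'$. The key is a translation lemma: for every $\Cal L'$-formula $\varphi(x_1,\dots,x_n)$ and all $a_1,\dots,a_n \in P_{\Cal M}$,
\[
\Cal M \models \varphi_e(a_1,\dots,a_n) \quad\Longleftrightarrow\quad \Cal N \models \varphi([a_1],\dots,[a_n]),
\]
where $[a_i]$ denotes the $E_{\Cal M}$-class of $a_i$. This is proved by a straightforward induction on the complexity of $\varphi$, following the recursive definition of $\theta \mapsto \theta_e$. The atomic case $x = y$ uses that $\theta_e$ is $Exy$, which holds in $\Cal M$ exactly when $[a] = [b]$ in $\Cal N$; the atomic case $Rx_1\dots x_n$ uses the definition of $\bar{R}$ from $R_{\Cal M}$; the Boolean cases are immediate; and the quantifier cases work because the relativizing guard $Px$ in $\exists x(Px \wedge \theta'_e)$ and $\forall x(Px \to \theta'_e)$ ensures the witness or universal range is exactly $P_{\Cal M}$, which is precisely what maps onto the domain of $\Cal N$. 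Lemma \ref{lem:sametype} (or its underlying induction) already records the relevant invariance, so the induction goes through cleanly. Applying the translation lemma to each $\varphi \in T'$ and invoking axiom scheme (T5), which asserts $\Cal M \models \varphi_e$ for every $\varphi \in T'$, yields $\Cal N \models \varphi$ for all $\varphi \in T'$, so $\Cal N \models T'$.

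The main obstacle, such as it is, lies in getting the quantifier cases of the induction exactly right: one must check that relativizing the $\Cal L'$-quantifiers to $P$ correctly models quantification over the quotient, and in particular that existential witnesses in $\Cal M$ may always be taken inside $P_{\Cal M}$ while respecting $E_{\Cal M}$-classes. This is handled by the guard predicate $Px$ together with $E_{\Cal M}$-invariance (Lemma \ref{lem:sametype}): a witness $a \in P_{\Cal M}$ in $\Cal M$ projects to a witness $[a]$ in $\Cal N$, and conversely any class in $\Cal N$ lifts to some representative in $P_{\Cal M}$. Beyond this, the argument is bookkeeping; the substantive model-theoretic work — arranging that such an $\Cal M$ exists and has o-minimal open core — has already been done in Proposition \ref{prop:consistent} and the surrounding construction.
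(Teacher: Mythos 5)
Your proposal is correct and follows essentially the same route as the paper: form the quotient $P_{\Cal M}/E_{\Cal M}$, use (T4) (equivalently, Lemma \ref{lem:sametype}) for well-definedness of the induced relations, prove the translation equivalence $\Cal M \models \varphi_e(a_1,\dots,a_n) \Leftrightarrow \Cal N \models \varphi([a_1],\dots,[a_n])$ by induction on formulas, and conclude via (T5). Your additional remarks on the quantifier cases and the $\emptyset$-definability of the interpreting data are just a more explicit spelling-out of what the paper leaves to the reader.
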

\begin{proof}
Let $\Cal M :=(M,P_{\Cal M},E_{\Cal M}, \big(R_{\Cal M}\big)_{R\in \Cal L'})\models T^*$. Let $N$ be the set of equivalence classes of $E_{\Cal M}$. For an $r$-ary relation symbol $R$, let
\[
R_{\Cal N} := \{ ([a_1]_{E_{\cal M}},\dots,[a_n]_{E_{\cal M}}) \in N^n \ : \ (a_1,\dots,a_n) \in R_{\Cal M}\}.
\]
Note that $R_{\Cal N}$ is well-defined by Lemma \ref{lem:sametype}. Let $\Cal N = (N, \big(R_{\Cal N}\big)_{R\in \Cal L'})$. Since $\Cal N$ is interpretable in $\Cal M$, it is only left to show that $\Cal N \models T'$. Using a straightforward induction on $\Cal L'$-formulas and Axiom (T4) the reader can check that for every $\Cal L'$-formula $\varphi(x)$ and for every $a_1,\dots,a_n \in P_{\Cal M}$
\[
\Cal M \models \varphi_e(a_1,\dots,a_n) \hbox{ if and only if } \Cal N \models \varphi([a_1]_{E_{\cal M}},\dots,[a_n]_{E_{\cal M}}).
\]
Thus $\Cal N \models T'$, since $\Cal M$ satisfies (T5).
\end{proof}

\noindent Proposition \ref{prop:interpret} shows that $T^*$ satisfies condition (1) of Theorem A. In the rest of this section we will show that $T^*$ also satisfies condition (2). In order to do so we have to carefully analyse the definable sets in models of $T^*$.

\subsection{Back-and-forth system} To better understand definable sets and types in models of $T^*$, we follow the general strategy of the proofs of \cite[2.8]{DMS-Indepedent} and van den Dries \cite[Theorem 2.5]{densepairs} by constructing a back-and-forth system between models of $T^*$. Let $\kappa$ be a cardinal larger than $|T^*|$. Let $\Cal M_1$ and $\Cal M_2$ be two $\kappa$-saturated models of $T^*$. Let $\Cal I$ be the set of all partial $\Cal L$-isomorphisms $\iota : X \to Y$ between $\Cal M_1$ and $\Cal M_2$ such that there are
\begin{itemize}
\item finite $A\subseteq P_{\Cal M_1}$ and $A'\subseteq P_{\Cal M_2}$,
\item finite $Z\subseteq \Cal M_1$ and $Z'\subseteq \Cal M_2$
\end{itemize}
with
\begin{itemize}
\item[(i)] $\iota(A)=A'$ and $\iota(Z)=Z'$.
\item[(ii)] $Z$ and $Z'$ are $\dcl$-independent over $P_{\Cal M_1}$ and $P_{\Cal M_2}$ respectively,
\item[(iii)] $X=\dcl_T(AZ)$ and $Y=\dcl_T(A'Z')$,
\item[(iv)]  for all $a_1,\dots, a_n \in A$,
\[
\Cal M_1 \models \varphi_e(a_1,\dots,a_n) \hbox{ if and only if } \Cal M_2 \models \varphi_e(\iota(a_1),\dots,\iota(a_n)).
\]
\end{itemize}

\noindent In the following we will show that $\Cal I$ is back-and-forth system of partial $\Cal L^*$-iso-\linebreak morphisms.

\begin{lem} Let $\iota : X \to Y \in \Cal I$ and let $A,Z \subseteq \Cal M_1$ and $A',Z'\subseteq \Cal M_2$ be such that $A,A,Z,Z'$ satisfy conditions (i)-(iv) above. Then $\iota$ is a partial $\Cal L^*$-isomorphism and $X \cap P_{\Cal M_1} = A$.
\end{lem}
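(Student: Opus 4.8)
The plan is to prove the two assertions separately: first that $\iota$ respects the $\Cal L^*$-structure (and not merely the $\Cal L$-structure), and second the set-theoretic identity $X \cap P_{\Cal M_1} = A$. Since $\iota$ is already a partial $\Cal L$-isomorphism by hypothesis, and $\Cal L^* = \Cal L \cup \{P,E\} \cup \Cal L'$, it suffices to check that $\iota$ is compatible with the interpretations of $P$, of $E$, and of each relation symbol of $\Cal L'$. These are exactly the symbols not already controlled by the $\Cal L$-isomorphism hypothesis. The identity $X \cap P_{\Cal M_1} = A$ is the natural first target, because once we know that the only elements of $X$ lying in $P_{\Cal M_1}$ are precisely those of $A$, the compatibility with $P$ follows immediately, and the verification for $E$ and for the $\Cal L'$-relations reduces to statements about tuples from $A$, which are governed by condition (iv).

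\medskip

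\noindent\textbf{Step 1: the identity $X \cap P_{\Cal M_1} = A$.} The inclusion $A \subseteq X \cap P_{\Cal M_1}$ is trivial since $A \subseteq P_{\Cal M_1}$ and $A \subseteq \dcl_T(AZ) = X$ by (iii). For the reverse inclusion, I would take $c \in X \cap P_{\Cal M_1}$ and argue that $c \in A$. By (iii), $c \in \dcl_T(AZ)$, so $c$ is in the $\dcl_T$-closure of $A \cup Z$. The point is to use the $\dcl_T$-independence assumptions from (T1) and (ii): $P_{\Cal M_1}$ is $\dcl_T$-independent by (T1), $Z$ is $\dcl_T$-independent over $P_{\Cal M_1}$ by (ii), and $A \subseteq P_{\Cal M_1}$. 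Since $c \in P_{\Cal M_1}$ and $c \in \dcl_T(AZ)$, exchange for the pregeometry $\dcl_T$ forces $c \in \dcl_T(A)$ — otherwise $c$ would depend on some element of $Z$, contradicting that $Z$ is independent from $P_{\Cal M_1}$ over the relevant base. But $A \subseteq P_{\Cal M_1}$ is $\dcl_T$-independent, and a member of a $\dcl_T$-independent set that lies in the $\dcl_T$-closure of the rest must already be an element of that set; hence $c \in A$.

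\medskip

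\noindent\textbf{Step 2: compatibility with $P$, $E$, and $\Cal L'$.} Using Step 1 and its symmetric counterpart $Y \cap P_{\Cal M_2} = A'$, compatibility with $P$ is immediate: for $x \in X$, we have $x \in P_{\Cal M_1}$ iff $x \in A$ iff $\iota(x) \in A'$ (by (i)) iff $\iota(x) \in P_{\Cal M_2}$. For $E$ and the relation symbols $R \in \Cal L'$, note that by axioms (T2) and (T4) these relations hold only among elements of $P$, so the relevant tuples lie in $X \cap P_{\Cal M_1} = A$. For such tuples condition (iv) directly transfers membership in $E_{\Cal M_1}$ and in $R_{\Cal M_1}$ across $\iota$: one instantiates (iv) with the formula $\varphi_e$ taken to be $E x_1 x_2$ (which is $(x_1 = x_2)_e$) and with $\varphi_e = R x_1 \dots x_n$, recalling the definition of $\theta_e$. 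Thus $\iota$ preserves all the atomic $\Cal L^*$-relations on $X$, and being an $\Cal L$-isomorphism it is therefore a partial $\Cal L^*$-isomorphism.

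\medskip

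\noindent I expect Step 1 to be the main obstacle, since it is the only place where the pregeometry structure of $\dcl_T$ must be exploited with care: the delicate point is to combine the global $\dcl_T$-independence of $P_{\Cal M_1}$ from (T1) with the relative independence of $Z$ over $P_{\Cal M_1}$ from (ii) so as to conclude that an element of $P_{\Cal M_1}$ lying in $\dcl_T(AZ)$ cannot genuinely involve any element of $Z$. Once this identity is in hand, Step 2 is routine bookkeeping driven entirely by the definition of $\theta_e$ and condition (iv).
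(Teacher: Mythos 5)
Your proof is correct and follows essentially the same route as the paper's: the identity $X \cap P_{\Cal M_1} = A$ is established by the same exchange argument combining the $\dcl_T$-independence of $P_{\Cal M_1}$ with the independence of $Z$ over $P_{\Cal M_1}$, and the upgrade from $\Cal L$- to $\Cal L^*$-isomorphism is obtained, exactly as in the paper, from condition (iv) applied to $(x=y)_e = Exy$ and to the atomic $\Cal L'$-formulas, together with the fact that $E$ and the $\Cal L'$-relations are contained in powers of $P$.
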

\begin{proof}
We first show that $X \cap P_{\Cal M_1} = A$. Suppose there is $z \in (X\cap P_{\Cal M_1})\setminus A$. Since $P_{\Cal M_1}$ is $\dcl_T$-independent and $A\subseteq P_{\Cal M_1}$, we have that $z \notin \dcl_T(A)$. Thus $z \in \dcl_T(AZ)\setminus \dcl_T(A)$. Since $\dcl_T$ is a pregeometry and $z\in P_{\Cal M_1}$, this contradicts the $\dcl_T$-independence of $Z$ over $P_{\Cal M_1}$. Similarly we can show that $Y\cap P_{\Cal M_2}=A'$. Since $\iota(A)=A'$, it follows that $\iota(X\cap P_{\Cal M_1})=Y\cap P_{\Cal M_2}$. Since $(x=y)_e$ is $Exy$, we can easily deduce from (iv) that $\iota$ is an $\Cal L_e$-isomorphism. Applying (iv) once more, we see that $\iota$ is also an $\Cal L^*$-isomorphism.
\end{proof}

\begin{lem} The set $\Cal I$ is a back-and-forth system.
\end{lem}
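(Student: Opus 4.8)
The plan is to verify the two defining properties of a back-and-forth system: that $\Cal I$ is nonempty, and that it has both the forth and back extension properties. Nonemptiness follows quickly, since the empty map (with $A=A'=Z=Z'=\emptyset$) lies in $\Cal I$ once we observe that $\dcl_T(\emptyset)$ embeds canonically into each $\Cal M_i$ because $T$ is complete. By symmetry between $\Cal M_1$ and $\Cal M_2$, it suffices to establish only the forth property: given $\iota : X\to Y$ in $\Cal I$ witnessed by $A,A',Z,Z'$, and given an arbitrary element $c\in \Cal M_1$, I would produce an extension $\iota'\in\Cal I$ whose domain contains $c$.

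First I would split into the two essentially different cases according to how $c$ sits relative to $P_{\Cal M_1}$ and the existing data. The clean case is when $c\notin P_{\Cal M_1}$ and $c$ is $\dcl_T$-independent from the current domain over $P_{\Cal M_1}$; here I would use $\kappa$-saturation of $\Cal M_2$ to find $c'\in\Cal M_2$ with the matching $\Cal L$-type over $Y$ that is also $\dcl_T$-independent over $P_{\Cal M_2}$, then set $Z_{\mathrm{new}}=Z\cup\{c\}$, extend $\iota$ to $\dcl_T(AZc)$, and check conditions (i)--(iv) survive (condition (iv) is unaffected since $A$ is unchanged). The more delicate case is when $c\in P_{\Cal M_1}\setminus A$: here I must choose a target $c'\in P_{\Cal M_2}$ realizing the correct $\Cal L'$-type over $A'$, that is, a $c'$ so that appending it to $A$ preserves (iv). The point is that the $E$-class of $c$ determines its $\Cal L'$-type over $A$ by Lemma~\ref{lem:sametype}, and axioms (T3) (density of $E$-classes) together with saturation let me realize any consistent such type while keeping $c'$ suitably generic in $P_{\Cal M_2}$; I then absorb $c$ into $A$ and correspondingly into $A'$. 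A third, routine case is when $c\in\dcl_T(AZ)=X$ already, where there is nothing to do.

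The remaining subtlety is to handle an arbitrary $c$ by reducing to these cases: a general $c$ lies in $\dcl_T(P_{\Cal M_1}\cup Z_{\mathrm{gen}})$ for some finite independent tuple, so I would first adjoin any new points of $P_{\Cal M_1}$ needed to express $c$ (using the $P$-case above, possibly several times), and then adjoin $c$ itself via the independent case. Throughout, the key structural fact keeping the argument honest is the preceding lemma's conclusion $X\cap P_{\Cal M_1}=A$, which guarantees that the partition of the domain into its ``$P$-part'' $A$ and its ``generic part'' $Z$ is canonical, so the two cases do not interfere.

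The main obstacle I expect is the $P$-case: I must simultaneously arrange that the new point $c'$ realizes the prescribed $\Cal L'$-type over $A'$ (to preserve (iv)) and that it can be slotted into $P_{\Cal M_2}$ at a prescribed location in the order so that $\iota$ remains a partial $\Cal L$-isomorphism and the $\dcl_T$-independence of $P_{\Cal M_2}$ is respected. Reconciling these two demands is exactly where density of each $E$-class in $P$ (axiom (T3)) does the work: it ensures that within any order-interval and any $E$-class there are points of $P_{\Cal M_2}$ of the required $\Cal L'$-type, and saturation then lets me realize the full consistent requirement. Verifying that the resulting type over $Y$ is genuinely consistent and finitely satisfiable, so that saturation applies, is the step I would write out most carefully.
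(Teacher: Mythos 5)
Your plan follows the paper's own proof almost step for step: reduce to the forth property by symmetry; split into the cases where the new element lies in $P_{\Cal M_1}$, lies in $\dcl_T$ of the domain together with $P_{\Cal M_1}$, or is generic; and, in the $P$-case, first realize the relevant $\Cal L'$-type by saturation and then correct the cut using density of the $E$-classes (T3), with Lemma \ref{lem:sametype} guaranteeing that moving within an $E$-class does not change the $\Cal L'$-type. That is exactly the paper's argument, and your explicit treatment of nonemptiness of $\Cal I$ is a point the paper leaves implicit.

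There is, however, one genuine gap, in your ``clean case.'' You assert that $\kappa$-saturation of $\Cal M_2$ produces an element $c'$ realizing the image of the cut of $c$ over $Y$ which is moreover $\dcl_T$-independent over $P_{\Cal M_2}$ (this is what you need for $Z' \cup \{c'\}$ to still satisfy condition (ii)). Saturation alone cannot deliver this: the requirement ``$c' \notin \dcl_T(Y \cup P_{\Cal M_2})$'' refers to the set $P_{\Cal M_2}$, which may have cardinality at least $\kappa$, and it is an avoidance condition, not a condition expressible as part of a consistent partial type over a small parameter set. The danger is real, not formal: since $P_{\Cal M_2}$ is dense, $\dcl_T(Z'P_{\Cal M_2})$ is dense in $\Cal M_2$, so the convex set of realizations of the cut certainly contains points of $\dcl_T(Z'P_{\Cal M_2})$, and nothing you have said rules out that \emph{every} realization lies in this set. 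What is needed is that $\dcl_T(C \cup P_{\Cal M_2})$ is codense for finite $C$, equivalently that every cut over $Y$ realized in $\Cal M_2$ has a realization outside $\dcl_T(A'Z'P_{\Cal M_2})$; this is precisely where the $\dcl_T$-independence of $P$ (axiom (T1)) must be used, and the paper imports it as \cite[2.1]{DMS-Indepedent} in combination with saturation. Your proposal never invokes (T1) at this point, and without that ingredient (or an equivalent argument) the forth step in the generic case does not go through.
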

\begin{proof}
Let $\iota : X \to Y \in \Cal I$ and $b \in \Cal M_1$. By symmetry it is enough to show that if $b\notin X$, then we can find $\iota'\in \Cal I$ extending $\iota$ such that $b$ is in the domain of $\iota'$. From now on, assume that $b\notin X$.
\newline

\noindent \textbf{Case I:} $b \in P_{\Cal M_1}$. Let $p$ be the collection of all $\Cal L'$-formulas $\varphi(x,\iota(a))$ such that $a\in A^n$ and $\Cal M_1 \models \varphi_e(b,a)$. By saturation of $\Cal M_2$ and since $\iota \in \Cal I$, there is $b' \in P_{\Cal M_2}$ such that $\Cal M_2 \models \varphi_e(b',\iota(a))$ for every $\varphi(x,\iota(a))\in p$. By density of the equivalence classes of $E$, we can take an element $b'' \in P_{\Cal M_2}$ such that $(b',b'') \in E_{\Cal M_2}$ and the cuts realized by $b$ in $X$ and by $b''$ in $Y$ correspond via $\iota$.
Thus $\iota$ extends to an $\Cal L$-isomorphism $\iota' : \dcl_T(ZAb) \to \dcl_T(ZA'b'')$ with $\iota'(b)=b''$. Since $(b',b'') \in E_{\Cal M_2}$, we have by Lemma \ref{lem:sametype} that $\Cal M_2 \models \varphi_e(b'',\iota(a))$ for every $\varphi(x,\iota(a))\in p$. It is clear from our choice of $b''$ that $\iota' \in \Cal I$.\newline

\noindent \textbf{Case II:} $b \in \dcl_T(ZAP_{\Cal M_1})$.
\noindent Let $a_1,\dots,a_m \in P_{\Cal M_1}$ be such that $b\in \dcl_T(ZAa_1\dots a_m)$. By applying Case I $m$ times, we can find an element $\iota' \in \Cal I$ extending $\iota$ such that $a_1,\dots,a_m$ are in the domain of $\iota'$. Since the domain of $\iota'$ contains $\dcl_T(ZAa_1\dots a_m)$, it also contains $b$.\newline

\noindent \textbf{Case III:} $b \notin \dcl_T(AP_{\Cal M_1})$.
By \cite[2.1]{DMS-Indepedent} and saturation of $\Cal M_2$, there exists an element $b'\in \Cal M_2\setminus \dcl_T(A'P_{\Cal M_2})$ such that the cuts realized by $b$ over $X$ and $b'$ over $Y$ correspond via $\iota$. Therefore we can find an $\Cal L$-isomorphism $\iota' : \dcl_T(AZb) \to \dcl_T(A'Z'b')$ extending $\iota$ and mapping $b$ to $b'$. It is easy to check that $\iota' \in \Cal I$.
\end{proof}

\subsection{Completeness and quantifier-reduction} We now use the back-and-forth system $\Cal I$ to deduce certain desirable properties of $T^*$. In particular, we show completeness of $T^*$ and a quantifier-reduction result.

\begin{thm} The theory $T^*$ is complete.
\end{thm}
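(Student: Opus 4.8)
The plan is to deduce completeness of $T^*$ from the existence of the back-and-forth system $\Cal I$ established in the previous two lemmas, via the standard model-theoretic principle that if two $\kappa$-saturated models of a theory admit a back-and-forth system of partial isomorphisms whose domains include enough of each model, then the two models are elementarily equivalent. Since any two models of $T^*$ have $\kappa$-saturated elementary extensions for a sufficiently large $\kappa$, and elementary equivalence is preserved by taking elementary extensions, proving that any two $\kappa$-saturated models $\Cal M_1, \Cal M_2$ of $T^*$ are elementarily equivalent suffices to conclude that $T^*$ is complete.

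First I would verify that $\Cal I$ is nonempty: the empty map (or, if one prefers, the restriction of the identity on $\dcl_T(\emptyset)$) is a partial $\Cal L$-isomorphism satisfying conditions (i)--(iv) with $A=A'=Z=Z'=\emptyset$, so $\Cal I \neq \emptyset$. Here one uses that $T$ is complete, so that $\dcl_T(\emptyset)$ in $\Cal M_1$ and $\Cal M_2$ are $\Cal L$-isomorphic, and condition (iv) holds vacuously over the empty parameter set. By the previous lemma every $\iota \in \Cal I$ is in fact a partial $\Cal L^*$-isomorphism, and by the lemma immediately preceding this theorem, $\Cal I$ is a genuine back-and-forth system: for every $\iota \in \Cal I$ and every $b$ in either model, $\iota$ extends to some $\iota' \in \Cal I$ whose domain (resp.\ range) contains $b$.

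Given these two facts, I would invoke the standard back-and-forth argument: starting from the nonempty system, one builds, for any given $\Cal L^*$-sentence or more directly for the purpose of elementary equivalence, a chain of partial isomorphisms whose union is an elementary map. Concretely, the existence of a nonempty back-and-forth system of partial $\Cal L^*$-isomorphisms between $\Cal M_1$ and $\Cal M_2$ implies $\Cal M_1 \equiv \Cal M_2$ (this is the Fra\"iss\'e/Ehrenfeucht back-and-forth criterion for elementary equivalence of saturated structures). Hence all $\kappa$-saturated models of $T^*$ are elementarily equivalent, and since every model has a $\kappa$-saturated elementary extension, all models of $T^*$ are elementarily equivalent, i.e.\ $T^*$ is complete.

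The main obstacle, which has already been handled in the two preceding lemmas, is the back-and-forth extension property itself---in particular Case I, where one must extend $\iota$ by a new point $b \in P_{\Cal M_1}$ while simultaneously matching its $\Cal L'$-type (via saturation of $\Cal M_2$) and its cut over the ordered $\Cal L$-structure (via density of the $E$-classes, which lets one adjust the chosen realization $b'$ within its $E$-class to $b''$ so that the cut corresponds while the $\Cal L'$-type is preserved by Lemma \ref{lem:sametype}). With those lemmas in hand, the proof of completeness itself is a routine application of the back-and-forth criterion, so I would keep this argument brief and simply cite the established system.
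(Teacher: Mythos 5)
Your proof is correct and takes essentially the same route as the paper: the paper likewise deduces completeness directly from the back-and-forth system between $\kappa$-saturated models of $T^*$, which gives elementary equivalence of any two such models and hence of all models. The only differences are cosmetic---you make explicit the nonemptiness of $\Cal I$ (via the identity on $\dcl_T(\emptyset)$, using completeness of $T$), which the paper leaves implicit, while the paper additionally cites the consistency of $T^*$ (Proposition~\ref{prop:consistent}), which your argument tacitly presumes when speaking of models.
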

\begin{proof}
By Proposition \ref{prop:consistent}, $T^*$ is consistent. In the previous section we constructed a back-and-forth system between any two $\kappa$-saturated models of $T^*$. This implies that two such models are elementary equivalent. Completeness of $T^*$ follows.
\end{proof}

\begin{defn} We call an $\Cal L^*$-formula $\chi(y)$ \textbf{special} if it is of the form
\[
\exists x \ Px \wedge \psi_e(x) \wedge \varphi(x,y),
\]
where $\psi$ is an $\Cal L'$-formula and $\varphi$ is an $\Cal L$-formula.
\end{defn}

\noindent We now establish that $T^*$ has quantifier-elimination up to boolean combinations of special formulas (compare this result and its proof to \cite[2.9]{DMS-Indepedent}) and \cite[Theorem 1]{densepairs}).

\begin{thm} Each $\Cal L^*$-formula is $T^*$-equivalent to a boolean combination of special formulas.
\end{thm}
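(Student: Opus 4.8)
The plan is to show every $\Cal L^*$-formula is $T^*$-equivalent to a boolean combination of special formulas by leveraging the back-and-forth system $\Cal I$. The standard model-theoretic mechanism is this: two formulas are $T^*$-equivalent iff they agree on all tuples in all models; so it suffices to show that whenever $\iota : X \to Y \in \Cal I$ and $b \in X^k$, $b' = \iota(b) \in Y^k$, the tuples $b$ and $b'$ satisfy exactly the same boolean combinations of special formulas, because then (by completeness and a type-counting/compactness argument) satisfaction of an arbitrary $\Cal L^*$-formula $\chi(y)$ is determined by the boolean-combination-of-special type, forcing $\chi$ to be equivalent to such a boolean combination.

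\emph{First} I would reduce to the claim: if $b \in X^k$ and $\iota(b)=b'$, then for every special formula $\chi(y)$ we have $\Cal M_1 \models \chi(b)$ iff $\Cal M_2 \models \chi(b')$. Once this is established, the back-and-forth property guarantees that any $b \in \Cal M_1^k$ and $b' \in \Cal M_2^k$ with the same boolean-combination-of-special type can be placed in the domain/range of a common $\iota \in \Cal I$ (after adding finitely many witnesses), so they realize the same complete $\Cal L^*$-type; a routine compactness argument then converts "same boolean-combination-of-special type implies same $\Cal L^*$-type" into the desired quantifier-reduction statement. \emph{Second}, to prove the claim, unfold $\chi(y) = \exists x\,(Px \wedge \psi_e(x) \wedge \varphi(x,y))$. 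Suppose $\Cal M_1 \models \chi(b)$, witnessed by some $c \in P_{\Cal M_1}$. I would run the back-and-forth machinery to extend $\iota$ to some $\iota'' \in \Cal I$ whose domain contains $c$ — this is exactly Case I of the previous lemma — obtaining $c' := \iota''(c) \in P_{\Cal M_2}$. Condition (iv) applied to $\iota''$ gives $\Cal M_2 \models \psi_e(c')$, and since $\iota''$ is an $\Cal L$-isomorphism (the prior lemma) it preserves the $\Cal L$-formula $\varphi$, so $\Cal M_2 \models \varphi(c',b')$; hence $c'$ witnesses $\Cal M_2 \models \chi(b')$. The converse is symmetric.

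\emph{The main obstacle} is controlling the witness $c$ in Case I: the back-and-forth extension does not return the \emph{same} $E$-class as any pre-assigned element, but this is precisely where the structure of the special formula helps — $\psi_e(x)$ depends only on the $\Cal L'$-type of $x$'s $E$-class (Lemma~\ref{lem:sametype}), which the extension preserves via condition (iv), while $\varphi(x,y)$ is an $\Cal L$-formula preserved because $\iota''$ is a partial $\Cal L$-isomorphism. The genuinely delicate point is the passage from the witness-preservation claim to a clean statement "same boolean-combination-of-special type $\Rightarrow$ same $\Cal L^*$-type," which requires checking that the boolean-combination-of-special type of a tuple already records enough information to reconstruct an element of $\Cal I$ containing that tuple; concretely one must verify that conditions (i)--(iv) defining $\Cal I$ are all captured by special formulas (the $E$-data and $\Cal L'$-data by clauses (iv) and Lemma~\ref{lem:sametype}, the $\Cal L$-cut data and $\dcl_T$-independence by $\Cal L$-formulas, which are themselves special with trivial $\psi$). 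I expect the verification that $\dcl_T$-independence of $Z$ over $P$ is detectable at the level of special-formula types to be the fussiest step, handled as in \cite[2.9]{DMS-Indepedent}.
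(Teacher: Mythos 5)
There is a genuine gap, and it lies in your very first reduction. You claim it suffices to show that whenever $\iota \in \Cal I$ maps $b$ to $b'$, the tuples $b$ and $b'$ satisfy the same boolean combinations of special formulas. That implication goes the wrong way. Special formulas are $\Cal L^*$-formulas, and $\Cal I$ is already known to be a back-and-forth system of partial $\Cal L^*$-isomorphisms between saturated models, so $\Cal I$-related tuples automatically have the same complete $\Cal L^*$-type, hence the same special type; your Claim (and the witness-chasing argument via Case I) is essentially free and carries no new information. The same reasoning would "prove" that every formula is a boolean combination of, say, quantifier-free $\Cal L$-formulas, which is false. What the quantifier-reduction actually requires is the converse: if $a$ and $b$ satisfy the \emph{same} special formulas, then some $\iota \in \Cal I$ maps $a$ to $b$ (and only then does saturation plus compactness yield the theorem). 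You do recognize this in your final paragraph, but you leave it as a "verification" and defer the fussiest part to the literature; that verification \emph{is} the entire proof.

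Concretely, the missing argument runs as follows. Given $a,b$ with the same special type, permute coordinates so that $a_1,\dots,a_r$ are $\dcl_T$-independent over $P_{\Cal M}$ and $a_{r+1},\dots,a_n \in \dcl_T(a_1\dots a_r g)$ for some $g \in P_{\Cal M}^m$, with $a_i = f_i(a_1,\dots,a_r,g)$ for $\Cal L$-$\emptyset$-definable functions $f_i$. (Equality of special types transfers the independence of $a_1,\dots,a_r$ over $P_{\Cal M}$ to $b_1,\dots,b_r$.) The crux is to produce $h \in P_{\Cal M}^m$ with $\tp_{\Cal L'}(h) = \tp_{\Cal L'}(g)$ and $f_i(b_1,\dots,b_r,h)=b_i$ for all $i$: for each $\Cal L'$-formula $\varphi$ with $\Cal M \models \varphi_e(g)$, the special formula $\exists x \, \big(Px \wedge \varphi_e(x) \wedge \psi(x,y)\big)$, where $\psi$ is the $\Cal L$-formula expressing the functional relations, holds of $a$ (witnessed by $g$), hence holds of $b$; saturation then yields a single $h$ realizing all these conditions simultaneously, and from $h$ one builds the required $\iota \in \Cal I$. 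This use of the specific shape of special formulas to manufacture the $P$-witness $h$ over $b$ is the key idea of the paper's proof, and it does not appear anywhere in your proposal.
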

\begin{proof}
Let $\Cal M$ be a $\kappa$-saturated model of $T^*$. Let $\Cal M_1 := \Cal M_2 :=\Cal M$ and let $\Cal I$ be the back-and-forth system between $\Cal M_1$ and $\Cal M_2$ constructed in the previous section.
Let $a=(a_1,\dots,a_n),b=(b_1,\dots,b_n) \in M^n$ be such that $a$ and $b$ satisfy the same special formulas. To establish the theorem it suffices to show that $\tp_{\Cal L^*}(a)=\tp_{\Cal L^*}(b)$. To prove the latter statement, it is enough to find $\iota\in I$ that maps $a$ to $b$. By permuting the coordinates we can assume there is $r \in \{0,\dots,n\}$ such that $a_1,\dots,a_r$ are $\dcl_T$-independent over $P_{\Cal M}$ and $a_{r+1},\dots,a_n \in \dcl_T(a_1\dots a_rP_{\Cal M})$. Since $a$ and $b$ satisfy the same special formulas, the reader can easily verify that $b_1,\dots,b_r$ are $\dcl_T$-independent over $P_{\Cal M}$. Let $m\in \N$ and $g=(g_1,\dots,g_m) \in P_{\Cal M}^m$ be such that $a_{r+1},\dots,a_n \in \dcl_T(a_1\dots a_rg)$.
For $i=r+1,\dots,n$, let $f_i : M^{r+m} \to M$ be an $\Cal L$-$\emptyset$-definable function such that $f_i(a_1,\dots,a_r,g) = a_i$. We will now find $h=(h_1,\dots,h_m) \in P_{\Cal M}^m$ such that
\begin{itemize}
\item[(1)] $\tp_{\Cal L'}(h)=\tp_{\Cal L'}(g)$,
\item[(2)] $f_i(b_1,\dots,b_r,h) = b_i$ for each $i=r+1,\dots,n$.
\end{itemize}
If we have such $h$, we can find an $\Cal L$-isomorphism $\iota : \dcl(a_1\dots a_rg) \to \dcl(b_1\dots b_rh)$ such that $\iota(g)=h$ and $\iota(a_i) = b_i$ for each $i=1,\dots,r$. Since $h$ satisfies (1) and each of the sets $\{a_1\dots a_r\}$ and $\{b_1\dots b_r\}$ is $\dcl_T$-independent over $P_{\Cal M}$, it is easy to check that $\iota \in \Cal I$. Because $h$ also satisfies (2), we get that $\iota(a_i)=b_i$ for $i=r+1,\dots,n$. Thus $\iota$ is the desired element of $\Cal I$.\newline

\noindent We now prove the existence of an $h \in P_{\Cal M}^m$ satisfying (1) and (2). Observe that there is an $\Cal L$-formula $\psi(x,y)$ such that an element $h \in M^m$ satisfies (2) if and only if $\Cal M\models \psi(h,b)$. By saturation, in order to find $h$ satisfying (1) and (2), it is enough to find for every $\Cal L'$-formula $\varphi(x)$ with $\Cal M\models \varphi_e(g)$ an $h\in P_{\Cal M}^m$ such that $\Cal M \models \varphi_e(h)\wedge \psi(h,b)$. So let $\varphi(x)$ be an $\Cal L'$-formula with $\Cal M\models \varphi_e(g)$. Consider the special formula $\chi(y)$ given by
\[
\exists x \ Px \wedge \varphi_e(x) \wedge \psi(x,y).
\]
Since $\Cal M\models \chi(a)$ and $a$ and $b$ satisfy the same special formulas, we get that $\Cal M\models \chi(b)$. Thus there exists $h\in P_{\Cal M}^m$ such that $\Cal M \models \varphi_e(h)\wedge \psi(h,b)$.
\end{proof}

\subsection{Types} In order to show statements (2)-(4) of Theorem A we need better control over the $\Cal L^*$-types in models of $T^*$. We establish the necessary results in this section. Throughout let $\Cal M$ be a $\kappa$-saturated model of $T^*$. We first introduce the following notation: for $C \subseteq \Cal M$ and $n\in \N$ we denote by $D_n(C)$ the set
\[
\{ z \in \Cal M^n \ : \ z \hbox{ is $\dcl_T$-independent from } CP_{\Cal M}\}.
\]

\begin{prop}\label{prop:type} Let $a \in P_{\Cal M}^k$, $z \in D_l(\emptyset)$, $b \in P_{\Cal M}^m$ and $y\in D_n(z)$. Then $\tp_{\Cal L^*}(by|az)$ is implied by the conjunction of
\begin{itemize}
\item $\tp_{\Cal L}(by|az)$,
\item ``$b\in P_{\Cal M}^m$'' and $\tp_{\Cal L'}(b|a)$,
\item ``$y \in D_n(z)$''.
\end{itemize}
\end{prop}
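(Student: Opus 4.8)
The plan is to reduce the statement to the back-and-forth system $\Cal I$ constructed above, specialized to $\Cal M_1 := \Cal M_2 := \Cal M$ exactly as in the proof of the quantifier-reduction theorem. Recall that the three listed conditions together form a partial $\Cal L^*$-type over $az$ in the variables for $b$ and $y$ (the clause ``$y\in D_n(z)$'' is expressed by the $\Cal L^*$-formulas asserting $y_i\notin\dcl_T(z\,w)$ for tuples $w$ from $P_{\Cal M}$). So let $(b',y')\in P_{\Cal M}^m\times D_n(z)$ be any realization of the conjunction, i.e. $\tp_{\Cal L}(b'y'|az)=\tp_{\Cal L}(by|az)$ and $\tp_{\Cal L'}(b'|a)=\tp_{\Cal L'}(b|a)$. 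To prove the proposition it suffices to produce $\iota\in\Cal I$ fixing $az$ pointwise with $\iota(b)=b'$ and $\iota(y)=y'$: since $\Cal I$ is a back-and-forth system of partial $\Cal L^*$-isomorphisms, the existence of such an $\iota$ forces $\tp_{\Cal L^*}(by|az)=\tp_{\Cal L^*}(b'y'|az)$, which is exactly the assertion that the conjunction implies the full $\Cal L^*$-type.

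To build $\iota$, I would set $A:=\{a_1,\dots,a_k,b_1,\dots,b_m\}$ and $Z:=\{z_1,\dots,z_l,y_1,\dots,y_n\}$, with images $A':=\{a_1,\dots,a_k,b'_1,\dots,b'_m\}$ and $Z':=\{z_1,\dots,z_l,y'_1,\dots,y'_n\}$, noting $A,A'\subseteq P_{\Cal M}$ and that the $y$- and $y'$-coordinates lie outside $P_{\Cal M}$ by independence. Since $\tp_{\Cal L}(by|az)=\tp_{\Cal L}(b'y'|az)$, the assignment fixing $az$ and sending $by\mapsto b'y'$ is $\Cal L$-elementary, hence extends to an $\Cal L$-isomorphism $\iota:X\to Y$ with $X:=\dcl_T(AZ)=\dcl_T(az\,by)$ and $Y:=\dcl_T(A'Z')=\dcl_T(az\,b'y')$. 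Conditions (i) and (iii) in the definition of $\Cal I$ then hold by construction.

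For (ii) I would argue in the pregeometry $\dcl_T$: the hypothesis $z\in D_l(\emptyset)$ says $z$ is $\dcl_T$-independent from $P_{\Cal M}$, while $y\in D_n(z)$ says $y$ is $\dcl_T$-independent from $zP_{\Cal M}$; by transitivity of independence the tuple $Z=zy$ is $\dcl_T$-independent over $P_{\Cal M}$, and the identical argument applied to $y'\in D_n(z)$ shows $Z'=zy'$ is $\dcl_T$-independent over $P_{\Cal M}$. Finally, (iv) follows from $\tp_{\Cal L'}(b|a)=\tp_{\Cal L'}(b'|a)$: any tuple from $A$ splits into coordinates of $a$, which $\iota$ fixes, and coordinates of $b$, which $\iota$ maps to the corresponding coordinates of $b'$, so every formula $\varphi_e$ holding of a tuple from $A$ holds of its $\iota$-image precisely because the equality of $\Cal L'$-types records the full $\Cal L'$-configuration of $b$ over the parameters $a$. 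Hence $\iota\in\Cal I$, and the argument is complete.

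The one genuinely delicate point is condition (ii): one must combine the two independence hypotheses correctly, checking that $D_l(\emptyset)$ together with $D_n(z)$ assembles to $\dcl_T$-independence of $zy$ over $P_{\Cal M}$ (and likewise for $zy'$), rather than merely independence of each piece separately. Once this transitivity step is in hand, the remaining verifications are routine bookkeeping against the four clauses defining $\Cal I$, with the role of each hypothesis of the proposition matching one clause: the $\Cal L$-type gives the $\Cal L$-isomorphism, the $\Cal L'$-type over $a$ gives (iv), and the independence clauses give (ii).
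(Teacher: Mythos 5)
Your proof is correct and follows essentially the same route as the paper: specialize the back-and-forth system $\Cal I$ to $\Cal M_1=\Cal M_2=\Cal M$, take any realization $(b',y')$ of the conjunction, build the $\Cal L$-isomorphism $\iota:\dcl_T(azby)\to\dcl_T(azb'y')$ from the equality of $\Cal L$-types, and verify membership in $\Cal I$ via the pregeometry transitivity (for condition (ii)) and the equality of $\Cal L'$-types over $a$ (for condition (iv)). The paper compresses these verifications into ``it is immediate,'' so your write-up is just a more detailed version of the same argument.
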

\begin{proof}
Set $\Cal M_1 := \Cal M_2 :=\Cal M$ and let $\Cal I$ be the back-and-forth system between $\Cal M_1$ and $\Cal M_2$ constructed in the previous section.
Let $b_1,b_2 \in P_{\mathcal M}^m$ and $y_1,y_2 \in D_n(z)$ be such that  $\tp_{\Cal L}(b_1y_1|az) = \tp_{\Cal L}(b_2y_2|az)$ and
 $\tp_{\Cal L'}(b_1 | a)= \tp_{\Cal L'}(b_2 | a)$. In order to show that $\tp_{\Cal L^*}(b_1y_1 | az) = \tp_{\Cal L^*}(b_2y_2 | az)$, we only need to find $\iota\in \Cal I$ such that $\iota(b_1y_1)=b_2y_2$ and the coordinates of $a$ and $z$ are in the domain of $\iota$. It is immediate that the identity on $\dcl_T(az)$ is in $\Cal I$. Since $\tp_{\Cal L}(b_1y_1|az)=\tp_{\Cal L}(b_2y_2|az)$, there is a partial $\Cal L$-isomorphism from $\dcl_T(azb_1y_1)$ to $\dcl_T(azb_2y_2)$ mapping $b_1y_1$ to $b_2y_2$. Because $\tp_{\Cal L'}(b_1 | a)= \tp_{\Cal L'}(b_2 | a)$ and $y_1,y_2 \in D_n(z)$, it is immediate that $\iota \in \Cal I$.
\end{proof}

\noindent We immediately obtain the following three corollaries from Proposition \ref{prop:type}.

\begin{cor}\label{lem:typeI} Let $C\subseteq \Cal M$ be finite and $y \in D_n(C)$. Then $\tp_{\Cal L^*}(y | C)$ is implied by $\tp_{\Cal L}(y | C)$ in conjunction with ``$y \in D_n(C)$''.
\end{cor}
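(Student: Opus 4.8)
The plan is to unwind the statement into a claim about realizations and then build a single element of the back-and-forth system $\Cal I$ by hand; the only real work is that a general finite $C$ is not of the form $az$ to which Proposition \ref{prop:type} directly applies, so the auxiliary parameters from $P_{\Cal M}$ must be relocated.

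Since $\Cal M$ is $\kappa$-saturated, it suffices to show that any two tuples $y,y' \in D_n(C)$ with $\tp_{\Cal L}(y|C)=\tp_{\Cal L}(y'|C)$ satisfy $\tp_{\Cal L^*}(y|C)=\tp_{\Cal L^*}(y'|C)$, and for this it is enough to produce $\iota\in\Cal I$ with $C$ in its domain, $\iota\res C=\operatorname{id}$ and $\iota(y)=y'$; indeed every element of $\Cal I$ is a partial $\Cal L^*$-isomorphism. To prepare $C$, I would use that $\dcl_T$ is a pregeometry: choose a subtuple $z$ of $C$ maximal subject to being $\dcl_T$-independent over $P_{\Cal M}$, so that $z\in D_l(\emptyset)$ and $C\subseteq\dcl_T(zP_{\Cal M})$, and then pick a finite tuple $A$ from $P_{\Cal M}$ with $C\subseteq\dcl_T(Az)$. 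Because $z\subseteq C$ we have $D_n(C)\subseteq D_n(z)$, so $y,y'\in D_n(z)$, and hence each of $zy$ and $zy'$ is $\dcl_T$-independent over $P_{\Cal M}$. From $\tp_{\Cal L}(y|C)=\tp_{\Cal L}(y'|C)$ I get an $\Cal L$-isomorphism $\sigma:\dcl_T(Cy)\to\dcl_T(Cy')$ fixing $C$ pointwise with $\sigma(y)=y'$.

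The crux is to find a tuple $A'$ from $P_{\Cal M}$ realizing the $\sigma$-image of $\tp_{\Cal L}(A|Cy)$ over $\dcl_T(Cy')$, subject to $\tp_{\Cal L'}(A')=\tp_{\Cal L'}(A)$. Given such $A'$, each defining relation $c=f(z,A)$ with $c\in C$ is fixed by $\sigma$ and hence transports to $c=f(z,A')$, so the $\Cal L$-isomorphism $\iota:\dcl_T(Azy)\to\dcl_T(A'zy')$ determined by $A\mapsto A'$, $z\mapsto z$, $y\mapsto y'$ extends $\sigma$, fixes $C$ pointwise, and sends $y$ to $y'$. Conditions (i)--(iv) defining $\Cal I$ now hold with the $P_{\Cal M}$-tuples $A,A'$ and the tuples $zy,zy'$, which are $\dcl_T$-independent over $P_{\Cal M}$; condition (iv) is exactly $\tp_{\Cal L'}(A)=\tp_{\Cal L'}(A')$. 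Thus $\iota\in\Cal I$ is the required map.

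I expect the existence of $A'$ to be the main obstacle, and it is the step where the hypotheses are used essentially. It is established by the same density-and-saturation argument that underlies Proposition \ref{prop:type}: the prescribed $\Cal L$-type is finitely satisfiable in $P_{\Cal M}$ because $P_{\Cal M}$ is dense and $y'$ has the same $\Cal L$-type over $C$ as $y$, while the prescribed $\Cal L'$-type can be met simultaneously because each $E$-class is dense in $P_{\Cal M}$ by (T3); $\kappa$-saturation then produces $A'$. This relocation is precisely what shows that the extra $\Cal L$-information present in $\tp_{\Cal L}(y|Az)$ but absent from $\tp_{\Cal L}(y|C)$ is irrelevant to the $\Cal L^*$-type over $C$, which is why the corollary cannot be obtained by a bare substitution of $C$ for $az$ in Proposition \ref{prop:type}.
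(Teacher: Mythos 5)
Your reduction is set up correctly: it does suffice to produce $\iota\in\Cal I$ fixing $C$ pointwise with $\iota(y)=y'$, your verification of conditions (i)--(iv) for the map $A\mapsto A'$, $z\mapsto z$, $y\mapsto y'$ is sound, and you have correctly isolated the crux in the existence of $A'$. But that step is a genuine gap, and the justification you offer for it is wrong. The type $\sigma(\tp_{\Cal L}(A\,|\,\dcl_T(Cy)))$ does not consist of cut conditions alone: precisely in the interesting case, where $C\not\subseteq\dcl_T(z)$, it contains exact $\dcl_T$-equations tying the coordinates of $A$ to the parameters in $C$ (e.g.\ $u_1+u_2=c$ when $c=a_1+a_2$). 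Density of $P_{\Cal M}$ and of the $E$-classes can realize prescribed cuts, but it is powerless against equations: since $P_{\Cal M}$ is $\dcl_T$-independent by (T1), an equation such as $u_1+u_2=c$ has only the finitely many solutions in $P_{\Cal M}^2$ obtained by permuting $(a_1,a_2)$. So your prescribed type is in general not even finitely satisfiable in $P_{\Cal M}^k$; this is also why the appeal to "the same argument that underlies Proposition \ref{prop:type}" fails --- in Case I of the back-and-forth lemma the new point satisfies $b\notin X$, so only non-principal cut conditions ever arise there, never equations.

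In fact the gap cannot be closed, because the statement itself fails for such $C$. Take $p_1<p_2$ in $P_{\Cal M}$, let $c:=p_1+p_2$ and $C:=\{c\}$ (so $z=\emptyset$, $A=(p_1,p_2)$). Since $|\dcl_T(c)|<\kappa$, saturation gives $\delta_0$ with $0<\delta_0<|w-p_1|$ for every $w\in\dcl_T(c)$ (each such $w$ differs from $p_1$, as $p_1\in\dcl_T(c)$ would force $\dim(p_1p_2)\leq 1$), and by density of $D_1(\{p_1,p_2\})$ (condition (1) in the proof of Theorem \ref{thm:opencoreTstar}) there is $\delta\in(0,\delta_0)$ with $\delta\notin\dcl_T(P_{\Cal M})$. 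Put $y:=p_1+\delta$ and $y':=p_1-\delta$. Then $y,y'\in D_1(C)$, and $y,y'$ lie on the same side of every element of $\dcl_T(c)$, so $\tp_{\Cal L}(y|C)=\tp_{\Cal L}(y'|C)$ by o-minimality. Yet for the $\Cal L^*$-formula $\chi(x):=\exists u_1\exists u_2\,(Pu_1\wedge Pu_2\wedge u_1+u_2=c\wedge u_1<u_2\wedge u_1<x)$, $\dcl_T$-independence of $P_{\Cal M}$ shows the only witness candidate is $(p_1,p_2)$, so $\chi$ defines $(p_1,+\infty)$; hence $\Cal M\models\chi(y)\wedge\neg\chi(y')$ and $\tp_{\Cal L^*}(y|C)\neq\tp_{\Cal L^*}(y'|C)$. (Equivalently: your $A'$ would have to satisfy $u_1+u_2=c$, $u_1<u_2$ and $u_1<y'$, and no such pair exists in $P_{\Cal M}^2$.) The root cause is that $p_1\in\dcl_{\Cal L^*}(C)\setminus\dcl_T(C)$, so the $\Cal L$-type over $C$ controls too little. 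The corollary is correct --- and your argument then works trivially, with $A'=A$ --- only for parameter sets of the form already covered by Proposition \ref{prop:type}, namely $C=\{a_1,\dots,a_k,z_1,\dots,z_l\}$ with $a_i\in P_{\Cal M}$ and $z$ $\dcl_T$-independent over $P_{\Cal M}$; for arbitrary finite $C$ one must first enlarge $C$ to such a set and assume agreement of $\Cal L$-types over the enlargement. Note that the paper offers no proof (it declares the corollary immediate from Proposition \ref{prop:type}), and its later uses of the corollary, e.g.\ in Theorem \ref{thm:opencoreTstar}, need exactly this repair: replace the parameter set by an enlargement of the special form before applying the cited criterion.
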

\begin{cor}\label{lem:induced} Let $a \in P_{\Cal M}^k$, $z \in D_l(\emptyset)$ and $b \in P_{\Cal M}^n$. Then $\tp_{\Cal L^*}(b|az)$ is implied by $\tp_{\Cal L}(b|az)$, ``$b \in P_{\Cal M}^n$'' and $\tp_{\Cal L'}(b|a)$.
\end{cor}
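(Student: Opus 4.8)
The plan is to derive Corollary \ref{lem:induced} from Proposition \ref{prop:type} by matching the hypotheses and trivializing the extraneous pieces. The statement we want to prove concerns $\tp_{\Cal L^*}(b|az)$ where $b \in P_{\Cal M}^n$, $a \in P_{\Cal M}^k$ and $z \in D_l(\emptyset)$. This is exactly the situation of Proposition \ref{prop:type} in the degenerate case where the ``$y$'' tuple is empty. Concretely, I would apply Proposition \ref{prop:type} with the roles of the variables as follows: the ``$a$'' of the proposition is our $a$, the ``$z$'' is our $z$, the ``$b$'' is our $b$, and the ``$y$'' is taken to be the empty tuple (so $n=0$ in the notation of the proposition, and $y \in D_0(z)$ holds vacuously).

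With this specialization, Proposition \ref{prop:type} tells us that $\tp_{\Cal L^*}(b|az)$ is implied by the conjunction of $\tp_{\Cal L}(b|az)$, the information ``$b\in P_{\Cal M}^n$'' together with $\tp_{\Cal L'}(b|a)$, and the information ``$y \in D_0(z)$''. The last clause is an empty (vacuously true) condition once $y$ is the empty tuple, and $\tp_{\Cal L}(b|az)$ together with $\tp_{\Cal L}(\emptyset|az)$ is just $\tp_{\Cal L}(b|az)$. Thus the three implying conditions collapse precisely to $\tp_{\Cal L}(b|az)$, ``$b\in P_{\Cal M}^n$'', and $\tp_{\Cal L'}(b|a)$, which is exactly the conclusion of the corollary.

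Since no genuine new construction is needed — the back-and-forth isomorphism $\iota \in \Cal I$ furnished by the proof of Proposition \ref{prop:type} already witnesses the required type equality — the proof is a one-line reduction. The only point requiring a moment's care is the bookkeeping that the degenerate tuple $y$ really does satisfy the membership condition $y \in D_n(z)$ used in the proposition; for $n=0$ this is automatic, so there is no real obstacle here. I expect the entire argument to read simply as: apply Proposition \ref{prop:type} with the $y$-coordinate block empty.

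\begin{proof}
Apply Proposition \ref{prop:type} with the tuple $y$ taken to be empty, that is, with $n=0$. In this case the condition ``$y \in D_0(z)$'' holds vacuously, and the implying data reduce to $\tp_{\Cal L}(b|az)$, ``$b \in P_{\Cal M}^n$'', and $\tp_{\Cal L'}(b|a)$. The conclusion follows.
\end{proof}
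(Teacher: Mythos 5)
Your proposal is correct and is exactly the paper's route: the paper states that Corollary \ref{lem:induced} follows immediately from Proposition \ref{prop:type}, and the intended specialization is precisely the one you give, taking the tuple $y$ to be empty so that the condition ``$y \in D_n(z)$'' is vacuous. Your bookkeeping (the corollary's $n$ playing the role of the proposition's $m$, and $D_0(z)$ being trivially satisfied by the empty tuple) is the only content of the reduction, and it is handled correctly.
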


\begin{cor} Let $Z \subseteq P_{\Cal M}^n$ be $\Cal L^*$-definable. Then there is an $\Cal L$-definable set $Y \subseteq \Cal M^n$ and an $\Cal L'$-formula $\varphi(x)$ such that
\[
Z = Y \cap \{ a \in P_{\Cal M}^n \ : \ \Cal M \models \varphi_e(a)\}.
\]
\end{cor}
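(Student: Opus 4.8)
The plan is to decide membership in $Z$ from types and then trade types for formulas by saturation, mirroring the proof of the quantifier-reduction theorem. Fix an $\Cal L^*$-formula $\theta(x,c)$ with $x=(x_1,\dots,x_n)$ that defines $Z$, where $c$ is a finite parameter tuple from $\Cal M$. Since $\dcl_T$ is a pregeometry, I would first split $c$ into a ``$P$-part'' and a ``generic part'': choose a tuple $z$ from $\dcl_T(c)$ that is maximal among tuples $\dcl_T$-independent over $P_{\Cal M}$, so that $c\in\dcl_T(zP_{\Cal M})$, and then absorb the finitely many members of $P_{\Cal M}$ that are actually used into a finite tuple $a\in P_{\Cal M}^k$. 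This yields $a\in P_{\Cal M}^k$ and $z\in D_l(\emptyset)$ with $c\in\dcl_T(az)$, so that $Z$ is $\Cal L^*$-$az$-definable. Here I would invoke the same pregeometry argument used to prove $X\cap P_{\Cal M_1}=A$ in the back-and-forth subsection, which guarantees $\dcl_T(az)\cap P_{\Cal M}=a$, so no element of $P_{\Cal M}$ outside $a$ is accidentally named by the parameters.

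Next comes the key step. For $b\in P_{\Cal M}^n$, whether $b\in Z$ is part of $\tp_{\Cal L^*}(b\,|\,az)$, and by Corollary \ref{lem:induced} this type is implied by $\tp_{\Cal L}(b\,|\,az)$ together with ``$b\in P_{\Cal M}^n$'' and $\tp_{\Cal L'}(b\,|\,a)$. Thus membership of $b$ in $Z$ is a function of the pair $\big(\tp_{\Cal L}(b\,|\,az),\ \tp_{\Cal L'}(b\,|\,a)\big)$. Moreover, these two types vary \emph{independently}: by Case I of the back-and-forth, density of the $E$-classes lets one combine any cut of an element of $P_{\Cal M}$ over $\dcl_T(az)$ with any $\Cal L'$-type over $a$ realized in $P_{\Cal M}$, so the realizable pairs form a product of the space of $\Cal L$-cuts with the space of $\Cal L'$-types.

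Finally I would convert this type-level description into formulas exactly as in the quantifier-reduction theorem. Fixing $b\in Z$, the previous paragraph supplies an $\Cal L$-$az$-formula in $\tp_{\Cal L}(b\,|\,az)$ and an $\Cal L'$-formula $\psi$ with $\psi_e\in\tp_{\Cal L'}(b\,|\,a)$ whose conjunction forces membership in $Z$ locally; $\kappa$-saturation and compactness then cut the a priori infinite description down to finitely many $\Cal L$-$az$-definable sets $Y_i$ and $\Cal L'$-formulas $\varphi_i$ with
\[
Z=\bigcup_{i=1}^{N}\Big(Y_i\cap\{\,b\in P_{\Cal M}^n : \Cal M\models(\varphi_i)_e(b)\,\}\Big).
\]
I expect the main obstacle to be the passage from this finite Boolean combination to the single intersection $Y\cap\{b\in P_{\Cal M}^n:\Cal M\models\varphi_e(b)\}$ asserted in the statement. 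This is precisely where the product structure of the realizable type-pairs must be exploited to decouple the $\Cal L$-conditions (allowed the parameters $az$) from the $\Cal L'$-conditions (seeing only $a$), and it is the point that needs to be handled with the most care, since a careless reading of the extraction yields only a Boolean combination rather than the displayed form.
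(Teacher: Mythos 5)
Your derivation is correct, and it is essentially the intended argument, up to and including the finite union: splitting the parameter tuple as $c\in\dcl_T(az)$ with $a\in P_{\Cal M}^k$, $z\in D_l(\emptyset)$ (your pregeometry argument is fine), observing via Corollary \ref{lem:induced} that membership of $b\in P_{\Cal M}^n$ in $Z$ depends only on the pair $(\tp_{\Cal L}(b\,|\,az),\tp_{\Cal L'}(b\,|\,a))$, and then applying $\kappa$-saturation twice (once to replace each pair of types by a single $\Cal L$-formula and a single $\Cal L'$-formula, using that both kinds of formulas are closed under conjunction, and once to extract a finite subcover) to obtain
\[
Z=\bigcup_{i=1}^{N}\Bigl(Y_i\cap\{\,b\in P_{\Cal M}^n:\Cal M\models(\varphi_i)_e(b,a)\,\}\Bigr).
\]
The paper offers nothing beyond this: the corollary is asserted to follow ``immediately'' from Proposition \ref{prop:type}, and the compactness argument you gave is the only reasonable reading of that claim.

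The step you flag as the ``main obstacle'' --- collapsing this union to a single intersection --- is, however, not a gap to be closed: it is impossible, because the corollary as printed is false. Take $\Cal L'=\{R\}$ with $R$ unary, let $T'$ be the complete theory of an infinite set in which $R$ and its complement are both infinite, and consider
\[
Z=\{\,b\in P_{\Cal M}\ :\ (\Cal M\models Rb\ \wedge\ b>0)\ \vee\ (\Cal M\models\neg Rb\ \wedge\ b<0)\,\}.
\]
Suppose $Z=Y\cap\Phi$ with $Y$ $\Cal L$-definable and $\Phi=\{b\in P_{\Cal M}:\Cal M\models\varphi_e(b,a)\}$. By Lemma \ref{lem:sametype}, $\Phi$ is a union of $E$-classes. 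Each $E$-class is dense in $\Cal M$ and, being contained in $R_{\Cal M}$ or in $P_{\Cal M}\setminus R_{\Cal M}$ by (T4), meets $Z$; since $Z\subseteq\Phi$, this forces $\Phi=P_{\Cal M}$, hence $Z=Y\cap P_{\Cal M}$. But then $Y$ contains the sets $R_{\Cal M}\cap(0,\infty)$ and $(P_{\Cal M}\setminus R_{\Cal M})\cap(-\infty,0)$, which are dense in $(0,\infty)$ and $(-\infty,0)$ respectively, so by o-minimality of $T$ the complement of $Y$ is finite; thus $Z$ is cofinite in $P_{\Cal M}$, contradicting that $Z$ is disjoint from the infinite set $R_{\Cal M}\cap(-\infty,0)$. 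So the correct conclusion is exactly the finite-union (equivalently, relative Boolean-combination) form you reached, and your proof of that form is complete. Note also that your ``product structure'' observation points in the opposite direction from how you hoped to use it: the fact that any $\Cal L$-cut can be combined with any $\Cal L'$-type (via density of the $E$-classes) is precisely what prevents a union of two rectangles such as $Z$ above from being a single rectangle. The misstatement is harmless for the rest of the paper: its later uses, e.g.\ the equivalence \eqref{eq:NIP} in the proof that $T^*$ is NIP (which overstates in the same way), go through verbatim with a finite disjunction $\bigvee_j(\psi_j\wedge(\theta_j)_e)$, since finite and cofinite subsets of $\omega$ form a Boolean algebra.
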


\noindent Combining Proposition \ref{prop:type} with a result of Boxall and Hieronymi \cite{BoxallH}, we are now able to deduce statement (2) of Theorem A.

\begin{thm}\label{thm:opencoreTstar} The theory $T$ is an open core of $T^*$.
\end{thm}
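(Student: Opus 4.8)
The plan is to invoke the Boxall--Hieronymi criterion \cite{BoxallH} for recognizing when a given o-minimal structure is the open core of an expansion. Roughly, such a criterion should say: if $\Cal M \models T^*$ and every open $\Cal L^*$-definable subset of $\Cal M^n$ is already $\Cal L$-definable (equivalently, every $\Cal L^*$-definable set is, locally and up to closure, controlled by an $\Cal L$-definable set), then the $\Cal L$-reduct is the open core. So the real work is to show that the new predicates $P$, $E$, and the $\Cal L'$-structure on $P$ contribute nothing topologically robust: any open set they help define is the same as one defined without them. First I would reduce, using the quantifier-reduction theorem just proved, to analyzing boolean combinations of special formulas, and then use the Corollary at the end of the Types subsection, which says that every $\Cal L^*$-definable subset of $P_{\Cal M}^n$ is the intersection of an $\Cal L$-definable set with an $\Cal L'$-condition on tuples from $P$.

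\textbf{The key topological input} is that $P_{\Cal M}$ is $\dcl_T$-independent and dense, with each $E$-class dense. Because $P_{\Cal M}$ is $\dcl_T$-independent it is, from the o-minimal viewpoint, a ``thin'' set: any tuple from $P_{\Cal M}^n$ with a coordinate not in $\dcl_T$ of the others lies in the large set $D_n(C)$, and Corollary \ref{lem:typeI} tells us that on $D_n(C)$ the $\Cal L^*$-type is governed entirely by the $\Cal L$-type. The heart of the argument is therefore: given an $\Cal L^*$-definable set $X \subseteq \Cal M^n$, I would stratify $\Cal M^n$ according to how the coordinates interact with $P_{\Cal M}$ — i.e.\ according to the partition into the $\dcl_T$-independent-over-$P$ part and the part captured in $\dcl_T(\cdot P_{\Cal M})$ — exactly as in the quantifier-reduction proof. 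On the generic stratum $D_n(\emptyset)$, Corollary \ref{lem:typeI} forces $X$ to agree with an $\Cal L$-definable set; and since $D_n(\emptyset)$ is dense in $\Cal M^n$ (its complement being a countable-type union of lower-dimensional $\dcl_T$-closures of $P_{\Cal M}$-tuples, which is nowhere dense), any open set contained in or definable from $X$ must coincide on a dense set with an $\Cal L$-definable set, hence be $\Cal L$-definable by o-minimality of the $\Cal L$-reduct.

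\textbf{The main obstacle} I anticipate is handling the points of $\Cal M^n$ that are \emph{not} generic over $P_{\Cal M}$, namely tuples lying in $\dcl_T(AP_{\Cal M})$ for finite $A \subseteq P_{\Cal M}$: here the $\Cal L'$-structure genuinely influences definability, so $X$ need not be $\Cal L$-definable near such points. The resolution should be that this exceptional locus is topologically negligible. Since $P_{\Cal M}$ is $\dcl_T$-independent, for any finite $a$ the set $\dcl_T(a\,P_{\Cal M})$ meets any generic fiber in a set of dimension strictly less than the ambient dimension, so the union of all such loci is a countable union of nowhere-dense sets and does not contain any nonempty open set. Thus an open $\Cal L^*$-definable set cannot be concentrated on the exceptional locus and must be determined by its restriction to $D_n(\emptyset)$. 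I would make this precise by verifying the exact hypotheses of the Boxall--Hieronymi theorem against the type-implication statement of Corollary \ref{lem:typeI}, which is the cleanest packaging of ``the new structure only lives on a $\dcl_T$-independent, hence topologically small, set.'' Assembling these pieces — quantifier reduction, the $P_{\Cal M}^n$-trace corollary, genericity of $D_n(\emptyset)$, and the Boxall--Hieronymi criterion — yields that every open $\Cal L^*$-definable set is $\Cal L$-definable, so $\Cal N^{\circ}$ is interdefinable with the $\Cal L$-reduct and $T$ is an open core of $T^*$.
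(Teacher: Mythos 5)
Your proposal takes essentially the same route as the paper: the paper's proof applies \cite[Corollary 3.1]{BoxallH} with $D_n(C)$ (for $C$ the finite parameter set of the given open set) as the distinguished dense set, verifying density via saturation and \cite[2.1]{DMS-Indepedent} and verifying the type-implication hypothesis via Corollary \ref{lem:typeI} --- precisely the two inputs you identify, with the quantifier-reduction theorem and the trace corollary not actually needed at this stage. The only adjustments to make: work with $D_n(C)$ throughout rather than $D_n(\emptyset)$, and justify density of $D_n(C)$ by saturation together with \cite[2.1]{DMS-Indepedent} rather than by a Baire-category count of the complement, since the complement is a union over \emph{all} finite tuples from $P_{\Cal M}$ and is not a countable union of nowhere dense sets in a saturated model.
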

\begin{proof}
We will use \cite[Corollary 3.1]{BoxallH} to show that every $\Cal L^*$-definable open set in $\Cal M$ is also $\Cal L$-definable. Let $X\subseteq \Cal M^n$ be open and $\Cal L^*$-definable over some finite parameter set $C$. We will now apply \cite[Corollary 3.1]{BoxallH}, using $D_n(C)$ as $D_{S_1\dots S_n}$. Therefore it is left to check that conditions (1)-(3) of \cite[Corollary 3.1]{BoxallH} hold for $D_n(C)$. These three conditions are
\begin{enumerate}
\item $D_n(C)$ is dense in $\Cal M$,
\item for every $y\in D_n(C)$ and every open set $U\subseteq \Cal M^n$, if $\tp_{\Cal L}(y|C)$ is realized in $U$, then $\tp_{\Cal L}(y|C)$ is realized in $U\cap D_n(C)$,
\item for every $y \in D_n(z)$, $\tp_{\Cal L^*}(y|C)$ is implied by $\tp_{\Cal L}(y | C)$ in conjunction with ``$y \in D_n(C)$''.
\end{enumerate}
Condition (1) follows easily from saturation of $\Cal M$ and \cite[2.1]{DMS-Indepedent}. Using o-minimality of $T$, it is easy to deduce Condition (2) from Condition (1). Finally, Condition (3) holds by Corollary \ref{lem:typeI}.
\end{proof}

\subsection{Completions of $T_e$} Using results from the previous sections we will now give a characterizations of all complete $\Cal L_e$-theories containing $T_e$.

\begin{defn}
Let $T_{e,\infty}$ be the $\Cal L_e$-theory consisting of $T_e$ and an axiom schema expressing the following statement:
\begin{itemize}
\item[(T6)] $E$ has infinitely many equivalence classes.
\end{itemize}
Similarly, for every $n\in \N_{>0}$ define $T_{e,n}$ to be the $\Cal L_e$-theory consisting of $T_e$ and a sentence stating that $E$ has exactly $n$ equivalence classes.
\end{defn}

\begin{thm}\label{thm:tecompletion} Let $p \in \N_{>0}\cup \{\infty\}$. The theory $T_{e,p}$ is complete.
\end{thm}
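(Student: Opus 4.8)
The plan is to recognize $T_{e,p}$ as the special case of the theory $T^*$ obtained from a trivial choice of auxiliary data, and then to quote the completeness of $T^*$ proved above. Concretely, I would take $\Cal L'$ to be the empty (relational) language and let $T'$ be the theory of an infinite set when $p=\infty$, and the theory of a set with exactly $n$ elements when $p=n\in\N_{>0}$. Each such $T'$ is consistent, disjoint from $\Cal L$ in the trivial sense required, and, crucially, complete: the theory of an infinite set is categorical in every infinite cardinal and has no finite models, hence is complete by the \L o\'s--Vaught test, while the theory of an $n$-element set has a unique model up to isomorphism. As $T$ is complete by the running assumption of this section, the hypotheses under which $T^*$ was shown to be complete are satisfied by this instance.

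With $\Cal L'=\emptyset$ the construction collapses almost entirely, and the second step is to check that the resulting $T^*$ is literally $T_{e,p}$. Since $\Cal L^*=\Cal L_e\cup\Cal L'=\Cal L_e$, the theory $T^*$ is already an $\Cal L_e$-theory; since $\Cal L'$ has no relation symbols, Axiom (T4) is vacuous; and since the only atomic $\Cal L'$-formulas are equalities, the translation $\theta\mapsto\theta_e$ merely replaces $x=y$ by $Exy$ and relativizes every quantifier to $P$. The one computation to carry out is that Axiom (T5) for these $T'$ reduces to the expected statement about $E$: unwinding $\theta_e$ shows that the translate of ``there exist at least $n$ pairwise distinct elements'' is ``there exist at least $n$ pairwise $E$-inequivalent elements of $P$'', i.e. ``$E$ has at least $n$ classes''. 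Ranging over $n$ this is precisely (T6), giving $T^*=T_{e,\infty}$; and in the finite case the universal axiom forcing ``at most $n$ elements'' translates, via the relativized $\forall$ and the replacement of $y=x_i$ by $Eyx_i$, to ``$E$ has at most $n$ classes'', giving $T^*=T_{e,n}$.

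Once this identification is in place, completeness of $T_{e,p}$ is immediate from the completeness of $T^*$ established above, whose proof already supplies consistency through Proposition \ref{prop:consistent}. I expect the only point requiring genuine care---rather than a one-line invocation---to be this verification that the relativized axioms (T5) coincide exactly with (T6) in the infinite case and with the ``exactly $n$ classes'' sentence in the finite case; in particular one must handle both the existential axioms, which give the lower bounds on the number of $E$-classes, and, for $p=n$, the universal axiom giving the matching upper bound, while confirming that the density and $\dcl_T$-independence clauses of $T_e$ are untouched, so that the instance is genuinely $T_{e,p}$ and not a proper strengthening.
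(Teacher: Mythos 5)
Your proposal is correct and takes essentially the same route as the paper: the paper's proof also specializes the fusion to $\Cal L'=\emptyset$ with $T'$ the theory of infinite sets (resp.\ of a set with exactly $p$ elements), unwinds the translation $\varphi_e$ to see that (T5) becomes the statement that $E$ has at least $n$ classes for all $n$ (resp.\ exactly $p$ classes), and then quotes the completeness of $T^*$. The only cosmetic difference is that the paper verifies just one inclusion---every model of $T_{e,p}$ is a model of $T^*$---which already suffices since $T^*$ is complete, whereas you identify the two theories' model classes in both directions.
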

\begin{proof}
We first consider the case that $p=\infty$. Let $\Cal L'$ be empty and $T'$ be the (complete) $\Cal L'$-theory of infinite sets. Let $\Cal L^*$ and $\Cal T^*$ be constructed as above. Since $\Cal L'=\emptyset$, we have that $\Cal L^* = \Cal L_e$. Since $T^*$ is complete, it is enough to show that every model of $T_{e,\infty}$ is also a model of $T^*$. Let $\Cal M \models T_{e,\infty}$. Since $\Cal L'=\emptyset$, we immediately get that $\Cal M$ satisfies (T4). It is left to show that $\Cal M$ satisfies (T5). Let $\varphi \in T'$. Since $T'$ is the theory of infinite sets, there is $n\in \N$ such that $\varphi$ is the following formula
\[
\exists x_1 \dots \exists x_n \bigwedge_{1\leq i<j \leq n} x_i \neq x_j.
\]
It is easy to check that $\varphi_e$ is the $\Cal L_e$-formula
\[
\exists x_1 \dots \exists x_n \bigwedge_{1\leq i<j \leq n} Px_i \wedge \neg Ex_ix_j.
\]
Since $\Cal M$ satisfies (T6), we get that $\Cal M \models \varphi_e$. Thus $\Cal M$ satisfies (T5).\newline
\noindent The proof of the case $p\in \N_{>0}$ can be done similarly by replacing the $\Cal L'$-theory of infinite sets by the $\Cal L'$-theory of a set with exactly $p$ elements.
\end{proof}

\noindent From Theorem \ref{thm:tecompletion} we can directly deduce the following characterization of completions of $T_e$.

\begin{cor} Let $\tilde{T}$ be a complete $\Cal L_e$-theory such that $T_e \subseteq \tilde{T}$. Then there is $p \in \N_{>0} \cup \{\infty\}$ such that
$T_{e,p}\models \tilde{T}$.
\end{cor}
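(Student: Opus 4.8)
The plan is to combine the completeness of each $T_{e,p}$ (Theorem~\ref{thm:tecompletion}) with the elementary fact that a complete theory admits no proper consistent extension. First I would observe that for each $n \in \N_{>0}$ the statement ``$E$ has at least $n$ equivalence classes'' is expressed by the single $\Cal L_e$-sentence
\[
\sigma_n := \exists x_1 \dots \exists x_n \bigwedge_{1\leq i<j \leq n} Px_i \wedge \neg Ex_ix_j,
\]
which is precisely the formula appearing in the proof of Theorem~\ref{thm:tecompletion}. Note that $T_e \vdash \sigma_1$, since (T1) forces $P$ to be nonempty, and that $\sigma_{n+1}$ implies $\sigma_n$.

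Since $\tilde T$ is complete, for each $n$ it decides $\sigma_n$, and I would split into two cases. If $\tilde T \vdash \sigma_n$ for every $n \in \N_{>0}$, then $\tilde T$ contains the entire axiom schema (T6), so $T_{e,\infty} \subseteq \tilde T$, and I set $p := \infty$. Otherwise the set $\{n \in \N_{>0} : \tilde T \vdash \sigma_n\}$ is a proper initial segment of $\N_{>0}$ (downward closed because $\sigma_{n+1}$ implies $\sigma_n$) containing $1$; letting $p$ be its largest element, we have $p \geq 1$ and $\tilde T \vdash \sigma_p \wedge \neg\sigma_{p+1}$, and this conjunction asserts exactly that $E$ has precisely $p$ classes, so $T_{e,p} \subseteq \tilde T$.

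In either case I have produced $p \in \N_{>0} \cup \{\infty\}$ with $T_{e,p} \subseteq \tilde T$. To finish, I would invoke the completeness of $T_{e,p}$: given any sentence $\chi \in \tilde T$, completeness forces $T_{e,p} \vdash \chi$ or $T_{e,p} \vdash \neg\chi$; the latter is impossible, since then $\tilde T$ would prove both $\chi$ and $\neg\chi$, contradicting the consistency of the complete theory $\tilde T$. Hence $T_{e,p} \vdash \chi$, that is, $T_{e,p} \models \tilde T$.

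I do not expect a genuine obstacle: the substantive content is entirely absorbed into Theorem~\ref{thm:tecompletion}, and what remains is the standard observation that the number of $E$-classes is governed by the first-order sentences $\sigma_n$, so that completeness of $\tilde T$ pins down a single value of $p$. The only points requiring minor care are that $p \geq 1$ and that ``exactly $p$ classes'' coincides with $\sigma_p \wedge \neg\sigma_{p+1}$, both of which are immediate.
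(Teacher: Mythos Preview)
Your proposal is correct and is exactly the natural elaboration the paper has in mind: the paper gives no detailed proof here, merely stating that the corollary follows directly from Theorem~\ref{thm:tecompletion}, and your argument---pin down $p$ via the sentences $\sigma_n$ and then use completeness of $T_{e,p}$---is precisely how one makes that direct deduction.
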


\noindent We obtain the following corollary as an immediate consequence of Theorem \ref{thm:opencoreTstar} and the proof of Theorem \ref{thm:tecompletion}.

\begin{cor} Let $p \in \N_{>0}\cup \{\infty\}$. The theory $T$ is an open core of $T_{e,p}$.
\end{cor}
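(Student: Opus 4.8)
The plan is to instantiate the general construction of Section~\ref{Section:Fusion} in exactly the special case already exploited in the proof of Theorem~\ref{thm:tecompletion}, and then read off the conclusion from Theorem~\ref{thm:opencoreTstar}. Fix $p \in \N_{>0}\cup\{\infty\}$. First I would take $\Cal L'$ to be empty and let $T'$ be the $\Cal L'$-theory of infinite sets when $p=\infty$, and the $\Cal L'$-theory of a set with exactly $p$ elements when $p\in\N_{>0}$. For these choices the constructed language $\Cal L^*$ equals $\Cal L_e$, and the resulting theory $T^*$ is an $\Cal L_e$-theory extending $T$.

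Next I would invoke the computation carried out inside the proof of Theorem~\ref{thm:tecompletion}, which shows that with these choices the axioms (T4) and (T5) of $T^*$ hold in precisely the models of $T_{e,p}$: axiom schema (T4) is vacuous since $\Cal L'=\emptyset$, and (T5) unwinds to the single statement asserting that $E$ has the prescribed number of classes, namely (T6) when $p=\infty$ and the corresponding finite sentence otherwise. Since $T^*$ is complete (by the completeness theorem for $T^*$) and $T_{e,p}$ is complete (by Theorem~\ref{thm:tecompletion}), and every model of $T_{e,p}$ is a model of $T^*$, the two complete theories coincide, and in particular have the same models.

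Finally, Theorem~\ref{thm:opencoreTstar} asserts that $T$ is an open core of $T^*$, that is, for every $\Cal N\models T^*$ the open core $\Cal N^\circ$ is interdefinable with some model of $T$. Since the models of $T^*$ and of $T_{e,p}$ are identical, this statement holds verbatim with $T^*$ replaced by $T_{e,p}$, which is exactly the assertion that $T$ is an open core of $T_{e,p}$.

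I expect no genuine obstacle here: the only point requiring care is the identification of the instantiated $T^*$ with $T_{e,p}$, and this identification is precisely what the proof of Theorem~\ref{thm:tecompletion} supplies, so the corollary follows immediately once that proof is in hand.
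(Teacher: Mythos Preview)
Your proposal is correct and follows exactly the approach indicated in the paper: the corollary is stated there as an immediate consequence of Theorem~\ref{thm:opencoreTstar} together with the proof of Theorem~\ref{thm:tecompletion}, and your write-up simply spells out that deduction.
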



\section{Preservation of NIP}\label{Section:NIP}

Let $T$ be a complete o-minimal extension of the theory of densely ordered
groups with a distinguished positive element, and let $\Cal L$ be its language. As before, let $\Cal L'$ be a relational language disjoint from $\Cal L$, and let $T'$ be a complete $\Cal L'$-theory. Furthermore, let $T^*$ be the $\Cal L^*$-theory constructed in the previous section. We will now show that $T^*$ is NIP if $T'$ is NIP. As we will see, this can be deduced rather directly from Corollaries \ref{lem:typeI} and \ref{lem:induced} and the following result of G{\"u}nayd\i n and Hieronymi \cite{GH-Dependent}.

\begin{fact}\label{gh:fact}\cite[Proposition 2.4]{GH-Dependent} Let $\Cal L_0$ be a first-order language and let $\Cal L_1$ be a language containing $\Cal L_0$ and a unary predicate symbol $U$ not in $\Cal L_0$. Let $T_0$ be a complete $\Cal L_0$-theory and let $T_1$ be a complete $\Cal L_1$-theory extending $T_0$. Let $\mathbb M$ be a monster model of $T_1$. Suppose that
\begin{itemize}
\item[(i)] $\dcl_{T_0}$ is a pregeometry,
\item[(ii)] for every $\Cal L_1$-formula $\varphi(x,y)$, indiscernible sequence $(g_i)_{i\in\omega}$ from $U_{\mathbb M}^p$ and $b\in\mathbb M^q$, the set
$\{ i\in\omega \ : \ \mathbb{M} \models
  \varphi(g_i,b)\}$
is either finite or co-finite (in $\omega$),
\item[(iii)] for every formula $\varphi(x,y)$, indiscernible sequence $(a_i)_{i\in\omega}$ from $\mathbb M$ and $b\in \mathbb M^q$ with $a_i\notin\dcl_{\Cal L_0}(U_{\mathbb M}b)$ for every $i\in\omega$, the set
$\{ i\in\omega \ : \ \mathbb{M} \models
  \varphi({a_i},b)\}$
is either finite or co-finite (in $\omega$).
\end{itemize}
Then $T_1$ is NIP.
\end{fact}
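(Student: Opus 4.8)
The plan is to verify the standard indiscernible-sequence characterisation of NIP (see \cite{Simon-Book}): $T_1$ is NIP if and only if for every $\Cal L_1$-formula $\varphi(x;y)$, every indiscernible sequence $(a_i)_{i\in\omega}$ from $\mathbb M$, and every $b\in\mathbb M^q$, the set $\{i\in\omega : \mathbb M\models\varphi(a_i,b)\}$ is finite or co-finite. The point of the hypotheses is that conditions (ii) and (iii) are exactly this statement restricted to two special classes of indiscernible sequences: those lying in $U_{\mathbb M}^p$, and those consisting of single elements generic over $U_{\mathbb M}b$ with respect to the pregeometry furnished by (i). So the whole task is to reduce the behaviour of an arbitrary indiscernible sequence to these two cases. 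First I fix $\varphi$, $(a_i)$, and $b$ and aim to bound the number of alternations of $\varphi(a_i,b)$ along the sequence.

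The reduction rests on (i). Writing $a_i=(a_i^1,\dots,a_i^k)$, I would split the coordinates, uniformly in $i$, into a subtuple $e_i$ that is $\dcl_{\Cal L_0}$-independent over $U_{\mathbb M}b$ and a remaining subtuple lying in $\dcl_{\Cal L_0}(U_{\mathbb M}b\,e_i)$. The latter can then be written as $F(g_i,e_i,b)$ for an $\Cal L_0$-definable function $F$ and a tuple $g_i\in U_{\mathbb M}^r$; substituting $F$ into $\varphi$ yields an $\Cal L_1$-formula $\psi$ with $\mathbb M\models\varphi(a_i,b)$ if and only if $\mathbb M\models\psi(g_i,e_i,b)$. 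As subtuples of the indiscernible sequence $(a_i)_i$, both $(g_i)_i$ and $(e_i)_i$ are themselves indiscernible, with $g_i\in U_{\mathbb M}^r$ and each $e_i$ still $\dcl_{\Cal L_0}$-independent over $U_{\mathbb M}b$. Arranging that the choice of coordinates is uniform in $i$ and that the decomposed sequence stays indiscernible, despite independence being measured over the parameter $b$ over which $(a_i)$ need not be indiscernible, is the first technical point; I would handle it by a Ramsey-style extraction together with shrinking of indiscernibles.

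It now suffices to bound the alternation of $\psi(g_i,e_i,b)$. Freezing the independent part at a single value $e_{j}$, the sequence $(g_i)_i$ lies in $U_{\mathbb M}^r$, so (ii) applied to the formula $g\mapsto\psi(g,e_{j},b)$ shows $\psi(g_i,e_{j},b)$ is eventually constant in $i$. Freezing instead the $U$-part at $g_{j}$ and noting that $\dcl_{\Cal L_0}(U_{\mathbb M}g_{j}b)=\dcl_{\Cal L_0}(U_{\mathbb M}b)$, because the coordinates of $g_j$ already lie in $U_{\mathbb M}$, each $e_i$ remains generic over $U_{\mathbb M}g_{j}b$; peeling the independent coordinates off one at a time so as to match the single-element hypothesis of (iii), I obtain that $\psi(g_{j},e_i,b)$ is eventually constant in $i$. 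Thus along each row and each column of the array $(i,j)\mapsto\psi(g_i,e_j,b)$ the truth value stabilises.

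The final and hardest step is to pass from row- and column-stabilisation to stabilisation of the diagonal $i\mapsto\psi(g_i,e_i,b)$, which is what we actually need; this implication is false for arbitrary binary arrays, so it must exploit the homogeneity of the configuration rather than the two frozen reductions alone. The plan is to extract in the monster model a suitably mutually indiscernible array out of $(g_i,e_i)_i$, to apply (ii) along the $U$-indexed direction and (iii) along the independent-indexed direction to get a \emph{uniform} bound on the number of sign changes in each direction, and then to read off a bound on the alternations of $\varphi(a_i,b)$, contradicting the supposed independence property. I expect the main obstacle to be exactly this array construction: keeping the two families mutually indiscernible while simultaneously preserving the $U$-membership of the $g$'s and the $\dcl_{\Cal L_0}$-genericity of the $e$'s over $U_{\mathbb M}b$, so that conditions (ii) and (iii) stay applicable in their respective directions at once; the peeling in the previous paragraph is really the same difficulty one dimension lower, so the cleanest packaging is likely an induction on the number of independent coordinates with the two conditions serving as the base cases.
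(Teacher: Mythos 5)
A preliminary remark: the paper does not prove this statement at all — it is quoted as a Fact from \cite[Proposition 2.4]{GH-Dependent} — so your attempt can only be measured against the proof given in that reference.

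Your proposal has a genuine gap, and you have in fact located it yourself: the passage from row- and column-stabilization of $\psi(g_i,e_j,b)$ to stabilization of the diagonal $i\mapsto\psi(g_i,e_i,b)$ is not a technical refinement but the entire content of the problem, and you offer only the hope of a ``mutually indiscernible array'' without constructing it. Two concrete obstructions stand in the way of your plan. First, your ``peeling'' cannot bring hypothesis (iii) to bear: (iii) concerns a sequence of \emph{single} elements each outside $\dcl_{T_0}(U_{\mathbb M}b)$ for the \emph{fixed} parameter $b$ of the formula, whereas to peel the first coordinate off the tuple $e_i$ you would have to move the remaining coordinates $e_i^2,\dots,e_i^m$ into the parameter slot, and these vary with $i$; so (iii) simply does not apply. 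Second, the decomposition step is itself delicate in a way your Ramsey hand-wave does not resolve: the witnesses $g_i\in U_{\mathbb M}^r$ are not subtuples of $a_i$, so $(g_i,e_i)_i$ need not be indiscernible, and any re-extraction must simultaneously preserve $U$-membership of the $g_i$, $\dcl_{T_0}$-independence of the $e_i$ over $U_{\mathbb M}b$, and the alternation of $\varphi(a_i,b)$ for the fixed $b$ — extracting an indiscernible sequence over $b$ would erase the alternation, while extracting over $\emptyset$ loses the independence over $U_{\mathbb M}b$, which is not part of the Ehrenfeucht--Mostowski type. (Compare Lemma \ref{lem:indi} of this paper, where the analogous extraction is carried out, deliberately, only over $U$ and with no extra parameter present.)

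The missing idea that makes the cited proof work is the standard reduction of NIP to formulas $\varphi(x;y)$ in which $x$ is a \emph{single} variable (see \cite{Simon-Book}). Once $|x|=1$, an indiscernible sequence $(a_i)_{i\in\omega}$ of singletons admits no mixed behaviour to untangle: one splits $\omega$ according to whether $a_i\in\dcl_{T_0}(U_{\mathbb M}b)$; on a subsequence where membership holds one writes $a_i=f(g_i,b)$ with $g_i\in U_{\mathbb M}^p$ (after uniformizing $f$), substitutes into $\varphi$, and applies (ii), while on a subsequence where membership fails one applies (iii) directly. The hypotheses (ii) and (iii) are tailored exactly to the two halves of this dichotomy — which is also precisely why (iii) is stated for single elements only — and the two-directional array problem you struggle with never arises. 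Without the single-variable reduction, or a fully worked-out substitute for your array extraction, the proposal does not constitute a proof.
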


\begin{thm} If $T'$ is NIP, so is $T^*$.
\end{thm}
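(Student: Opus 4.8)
The plan is to verify the three hypotheses of Fact~\ref{gh:fact} with $\Cal L_0 := \Cal L$, $T_0 := T$, $\Cal L_1 := \Cal L^*$, $T_1 := T^*$, and the unary predicate $U := P$. Since $T$ is o-minimal, $\dcl_{T}$ is a pregeometry, so hypothesis (i) is immediate. Let $\mathbb M$ be a monster model of $T^*$, which we may take to be $\kappa$-saturated so that the results of Section~\ref{Section:Fusion} apply. The remaining work is to establish (ii) and (iii), and both reductions should flow from the type-control corollaries of the previous section.

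\medskip\noindent For hypothesis (iii), I would take an $\Cal L^*$-formula $\varphi(x,y)$, a parameter $b \in \mathbb M^q$, and an indiscernible sequence $(a_i)_{i\in\omega}$ from $\mathbb M$ with each $a_i \notin \dcl_{\Cal L}(P_{\mathbb M}b)$. The point is that each $a_i$ is $\dcl_T$-independent from $P_{\mathbb M}$ over $b$, so (after absorbing $b$ into the base or arguing with a finite subtuple) Corollary~\ref{lem:typeI} tells us that $\tp_{\Cal L^*}(a_i | C)$ is controlled purely by $\tp_{\Cal L}(a_i|C)$ together with the independence statement ``$a_i \in D_1(C)$.'' Thus the $\Cal L^*$-type of $a_i$ over the relevant parameters is determined by its $\Cal L$-type, and so the question of whether $\{i : \mathbb M \models \varphi(a_i,b)\}$ is finite or cofinite reduces to the analogous question for an $\Cal L$-formula. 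Since $T$ is o-minimal and hence NIP, this set is finite or cofinite.

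\medskip\noindent For hypothesis (ii), I would take an indiscernible sequence $(g_i)_{i\in\omega}$ from $P_{\mathbb M}^p$ and $b \in \mathbb M^q$. Here the relevant tool is Corollary~\ref{lem:induced}: writing $b$ in terms of a $\dcl_T$-basis over $P_{\mathbb M}$, the $\Cal L^*$-type of a tuple from $P_{\mathbb M}$ over $b$ is implied by its $\Cal L$-type over $b$ together with its $\Cal L'$-type. The $\Cal L$-part is again handled by NIP of $T$, while the $\Cal L'$-part is exactly where the NIP hypothesis on $T'$ enters: by Proposition~\ref{prop:interpret} the induced structure on $P_{\mathbb M}/E_{\mathbb M}$ is a model of $T'$, and the $\Cal L'$-type of the $g_i$ corresponds via the quotient to types in this model of $T'$. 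So whether $\varphi(g_i,b)$ holds is governed by an $\Cal L$-condition (finite/cofinite by o-minimality) and an $\Cal L'$-condition on the classes $[g_i]_{E}$ (finite/cofinite by NIP of $T'$), and a Ramsey/indiscernibility argument combines these two alternations into a single one.

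\medskip\noindent The main obstacle I anticipate is the bookkeeping in (ii): one must show that the decomposition of $\tp_{\Cal L^*}(g_i|b)$ into an $\Cal L$-component and an $\Cal L'$-component is \emph{uniform} across the sequence, so that the finite/cofinite behavior of each component separately forces finite/cofinite behavior of $\varphi(g_i,b)$. The clean way to do this is to pass to a subsequence that is simultaneously $\Cal L$-indiscernible and $\Cal L'$-indiscernible (equivalently, so that the induced sequence $([g_i]_E)$ is indiscernible in the model of $T'$), invoke NIP of $T'$ on that induced sequence and NIP of $T$ on the $\Cal L$-reduct, and then use Corollary~\ref{lem:induced} to conclude that the full $\Cal L^*$-truth value of $\varphi(g_i,b)$ is constant on the subsequence. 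Everything else is routine once the correct parameters are fed into Fact~\ref{gh:fact}.
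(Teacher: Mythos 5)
Your overall route is the paper's: apply Fact~\ref{gh:fact} with $T_0 := T$, $T_1 := T^*$, $U := P$, get hypothesis (i) from o-minimality, get (iii) from Corollary~\ref{lem:typeI}, and get (ii) from Corollary~\ref{lem:induced} together with NIP of $T$ and of $T'$. Your treatment of (i) and (iii) matches the paper's proof. The problem is the final step of (ii). Hypothesis (ii) of Fact~\ref{gh:fact} requires that $\{ i \in \omega : \mathbb{M} \models \varphi(g_i,b)\}$ be finite or cofinite \emph{for the given sequence}. Your proposed ``clean way'' --- pass to a subsequence that is simultaneously $\Cal L$- and $\Cal L'$-indiscernible and show the truth value of $\varphi(g_i,b)$ is constant there --- cannot deliver this: constancy (or finiteness/cofiniteness) on a subsequence says nothing about the original index set, as the set of even indices illustrates --- it is neither finite nor cofinite, yet the truth value is constant along a subsequence. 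Moreover, even along an indiscernible sequence, NIP only yields an eventually constant truth value (finite or cofinite), not a constant one, since $b$ may cut the sequence; so the constancy claim is too strong even on the subsequence. As written, this step fails.

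The fix is immediate, and it is what the paper does. The subsequence maneuver is unnecessary: $(g_i)_{i \in \omega}$ is by hypothesis $\Cal L^*$-indiscernible, hence already $\Cal L$-indiscernible, and, since $\Cal L'$-formulas enter only through the formulas $\theta_e$, which are $\Cal L^*$-formulas, the induced sequence $([g_i]_{E_{\mathbb M}})$ is already indiscernible in the interpreted model of $T'$ furnished by Proposition~\ref{prop:interpret}. The uniformity you worry about is also automatic and needs no Ramsey argument: for the fixed $b=(b_1,b_2)$ (with $b_1$ in $P_{\mathbb M}$ and $b_2$ $\dcl_T$-independent over $P_{\mathbb M}$), Corollary~\ref{lem:induced} plus compactness over the space of $\Cal L^*$-types of elements of $P_{\mathbb M}^p$ over $b$ gives a single finite disjunction $\bigvee_j \bigl(\psi_j(x,b_1,b_2) \wedge \theta_{j,e}(x,b_1)\bigr)$, with each $\psi_j$ an $\Cal L$-formula and each $\theta_j$ an $\Cal L'$-formula, equivalent to $\varphi(x,b)$ on all of $P_{\mathbb M}^p$; the paper records this with a single conjunction $\psi \wedge \theta_e$. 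Then each set $\{ i : \mathbb{M}\models \psi_j(g_i,b_1,b_2)\}$ is finite or cofinite by NIP of $T$, each set $\{ i : \mathbb{M}\models \theta_{j,e}(g_i,b_1)\}$ is finite or cofinite by NIP of $T'$ applied in the quotient, and finite intersections and unions of finite-or-cofinite subsets of $\omega$ are again finite or cofinite. This closes hypothesis (ii) for the original sequence.
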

\begin{proof}
We apply Fact \ref{gh:fact} with $T_0:=T$ and $T_1:= T^*$. Since $T$ is o-minimal, $\dcl_T$ is a pregeometry.\newline
For (ii), let $\varphi(x,y)$ be an $\Cal L^*$-formula, $(g_i)_{i\in\omega}$ an indiscernible sequence from $P_{\mathbb M}^p$ and $b\in\mathbb M^q$. Without loss of generality, we can assume that there are $b_1\in P_{\mathbb M}^{q_1}$ and $b_2 \in \mathbb M^{q_2}$ such that $b_2$ is $\dcl_T$-independent over $P_{\mathbb M}$ and $b=(b_1,b_2)$.
By Corollary \ref{lem:induced} there is an $\Cal L$-formula $\psi(x,u,v)$ and an $\Cal L'$-formula $\theta(x,u)$ such that for all $a \in P_{\mathbb M}^p$
\begin{equation}\label{eq:NIP}
\tag{$\ast$} \mathbb{M} \models \varphi(a,b) \leftrightarrow \big(\psi(a,b_1,b_2) \wedge \theta_e(a,b_1)\big).
\end{equation}
Since both $T$ and $T'$ are NIP, it follows immediately from \eqref{eq:NIP} that $\{ i\in\omega \ : \ \mathbb{M} \models \varphi(g_i,b)\}$ is either finite or co-finite.\newline
For (iii), let $\varphi(x,y)$ be an $\Cal L^*$-formula, $(a_i)_{i\in\omega}$ an indiscernible sequence from $\mathbb M$ and $b\in \mathbb M^q$ with $a_i\notin\dcl_{T}(P_{\mathbb M}b)$ for every $i\in\omega$.
By Corollary \ref{lem:typeI} there is an $\Cal L$-formula $\psi(x,b)$ such that for all $a \in \mathbb{M} \setminus \dcl_{T}(P_{\mathbb M}b)$
\[
\mathbb{M} \models \varphi(a,b) \leftrightarrow \psi(a,b).
\]
Since $T$ is NIP, $\{ i\in\omega \ : \ \mathbb{M} \models   \varphi({a_i},b)\}$ is either finite or co-finite (in $\omega$).
\end{proof}

\noindent We can now give a proof of Theorem D that there is an NIP expansion of $T$ without a distal expansion.

\begin{proof}[Proof of Theorem D]
Fix a prime $p$. Let $T'$ be $ACF_p$. Since $T'$ is stable, $T'$ is NIP. Suppose $T^*$ has a distal expansion $\tilde{T}$. Then $\tilde{T}^{eq}$ is distal by \cite[Remark after Definition 9.17]{Simon-Book}. However, by Proposition \ref{prop:interpret} every model of $\tilde{T}^{eq}$ defines a model of $T'$. By \cite[Proposition 6.2]{CS-RegDis} $\tilde{T}^{eq}$ cannot be distal. A contradiction.
\end{proof}

\section{Preservation of Strong Dependence}\label{Section:STDEP}

In this section, we will show that $T^*$ (as constructed in Section \ref{Section:Fusion}) is strongly dependent if $T'$ is. We essentially follow the proof of Berenstein, Dolich and Onshuus \cite[Theorem 2.11]{BDO}.
\newline

\noindent Let $\Cal L_0$ be a first-order language containing $<$ and let $\Cal L_1$ be a language containing $\Cal L_0$ and a unary predicate symbol $U$ not in $\Cal L_0$. Let $T_0$ be a complete $\Cal L_0$-theory extending the theory of linear ordered sets such that $\dcl_{T_0}$ is a pregeometry. Let $T_1$ be a complete $\Cal L_1$-theory extending $T_0$, and let $\mathbb M$ be a monster model of $T_1$. If $X,Y$ are subsets of $\mathbb M$, we say $X$ is \textbf{$U$-independent} over $Y$ if $X\setminus U_{\mathbb M}$ is $\dcl_{T_0}$-independent over $U_{\mathbb M}Y$. If $Y=\emptyset$, we simply say that $X$ is $U$-independent. We say an indiscernible sequence  $(a_i)_{i \in I}$ of tuples of elements of $\mathbb M$ is \textbf{$U$-independent} if each $a_i$ is $U$-independent.

\begin{lem}\label{lem:indi} Let $\kappa$ be an infinite cardinal and let $(a_i)_{i \in I} $ be an indiscernible sequence of tuples of elements of $\mathbb M$ of length $\kappa$. Then there is an $U$-independent indiscernible sequence $J=(b_i)_{i \in I}$ of tuples of elements of $\mathbb M$ of length $\kappa$ such that for every $j<\kappa$ there is an $\Cal L_0$-$\emptyset$-definable function $f : \mathbb M^n \to \mathbb M$, and $j_1,\ldots, j_n<\kappa$ such that for every $i \in I$
\[
a_{i,j} = f(b_{i,j_1},\dots,b_{i,j_n}).
\]
\end{lem}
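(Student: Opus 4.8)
The statement asks me to replace an arbitrary indiscernible sequence $(a_i)_{i\in I}$ by a $U$-independent one $(b_i)_{i\in I}$ that still captures all the information of the original, in the sense that each coordinate of $a_i$ is recoverable, via a single fixed $\Cal L_0$-$\emptyset$-definable function applied uniformly across all $i$, from finitely many coordinates of $b_i$. My plan is to build the sequence $J$ coordinate-by-coordinate on a single row and then transfer the construction uniformly across the sequence by indiscernibility. Fix a single tuple $a=(a_j)_{j<\kappa}$, thought of as the ``generic'' row $a_i$. Since $\dcl_{T_0}$ is a pregeometry (the crucial hypothesis), I can apply the exchange property to the set $\{a_j : j<\kappa\}$ relative to the base $U_{\mathbb M}$: working through the coordinates in order, I extract a maximal $\dcl_{T_0}$-independent-over-$U_{\mathbb M}$ subfamily of those coordinates lying outside $U_{\mathbb M}$, and I let the $b_j$ be exactly these chosen coordinates (padding, if convenient, with the coordinates that already lie in $U_{\mathbb M}$). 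By maximality and the pregeometry axioms, every original coordinate $a_j$ then lies in $\dcl_{T_0}(U_{\mathbb M}\, b_{j_1}\cdots b_{j_n})$ for some finite set of the retained coordinates; since $\dcl_{T_0}$ over a base set is generated by $\Cal L_0$-$\emptyset$-definable functions applied to finitely many of the generators, this produces, for each $j$, an $\Cal L_0$-$\emptyset$-definable $f$ and indices $j_1,\dots,j_n$ with $a_j=f(b_{j_1},\dots,b_{j_n})$ (absorbing any needed $U_{\mathbb M}$-parameters into the indiscernible sequence after passing to a slightly enlarged sequence if necessary).

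\textbf{Transfer along the sequence.} The key point is that the choices made on the single row $a$ can be made uniformly for every $a_i$ because the sequence is indiscernible. Concretely, after fixing the combinatorial data --- which coordinates are retained as the $b$-coordinates, and for each dropped coordinate $j$ the witnessing function $f$ and the index set $\{j_1,\dots,j_n\}$ --- indiscernibility guarantees that the very same data works simultaneously for every $i\in I$. Indeed, the statement ``$a_{i,j}=f(b_{i,j_1},\dots,b_{i,j_n})$'' is expressed by an $\Cal L_0$-formula in finitely many coordinates of $a_i$, and since it holds for one index it holds for all by indiscernibility; likewise the formula expressing $\dcl_{T_0}$-independence-over-$U_{\mathbb M}$ of the retained coordinates transfers. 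This simultaneously yields the defining equations of the lemma and the $U$-independence of each $b_i$. Finally I must check that $(b_i)_{i\in I}$ is itself indiscernible: this is immediate, since each $b_i$ is a subtuple (reindexed by a fixed index function) of $a_i$, and any subtuple of an indiscernible sequence obtained by a fixed choice of coordinates is again indiscernible.

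\textbf{Main obstacle.} The subtle point, and the step I expect to require the most care, is the uniformity of the witnessing functions across the sequence together with the handling of parameters from $U_{\mathbb M}$. The pregeometry $\dcl_{T_0}$ is taken \emph{over the base $U_{\mathbb M}$}, so the equations $a_j\in\dcl_{T_0}(U_{\mathbb M}\,b_{j_1}\cdots b_{j_n})$ a priori involve parameters from $U_{\mathbb M}$, whereas the lemma demands an $\Cal L_0$-$\emptyset$-definable $f$ of $b$-coordinates \emph{alone}. To reconcile this, I would either arrange the construction so that the finitely many relevant $U_{\mathbb M}$-parameters are themselves incorporated as additional constant coordinates present in every $a_i$ (exploiting that one may prepend a fixed finite tuple to an indiscernible sequence, or work over these parameters throughout), or appeal to the standard fact that for a sequence indiscernible over a set, the witnessing parameters may be absorbed into the base. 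Once the parameters are handled, indiscernibility does the rest of the work mechanically; but getting the parameter bookkeeping right, so that the final $f$ genuinely has no hidden $U_{\mathbb M}$-parameters while $U$-independence is still over all of $U_{\mathbb M}$, is where the real argument lies.
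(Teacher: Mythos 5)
Your construction of the $U$-independent sequence itself is sound: passing to the subtuple of retained coordinates preserves indiscernibility, and $U$-independence of that subtuple transfers from one row to all rows because it is expressed by parameter-free $\Cal L_1$-formulas involving the predicate $U$. The genuine gap is in the recovery of the dropped coordinates. When $a_{i,j}$ is dropped, maximality gives $a_{i,j}\in\dcl_{T_0}(U_{\mathbb M}\,b_{i,j_1}\cdots b_{i,j_n})$, so the witnessing equation has the form $a_{i,j}=f(b_{i,j_1},\dots,b_{i,j_n},u_{i,0},\dots,u_{i,\ell})$ with $u_{i,0},\dots,u_{i,\ell}\in U_{\mathbb M}$, and these witnesses in general \emph{vary with $i$}: they are not coordinates of $a_i$, and the sequence is not indiscernible over them. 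Hence your transfer step fails: the equation is not a parameter-free formula in the coordinates of $a_i$, so indiscernibility does not propagate it across rows. Concretely, in a monster model of the theory of dense pairs of ordered $\Q$-vector spaces one can arrange an indiscernible sequence $a_i=(t,\,t+u_i)$ with $t\notin\dcl_{T_0}(U_{\mathbb M})$ and the $u_i\in U_{\mathbb M}$ pairwise distinct. The unique witness for row $i$ is $u_i=a_{i,2}-a_{i,1}$, different for every $i$; no fixed parameter tuple works for all rows, prepending $u_1$ to every row destroys indiscernibility (the equation $x_2=x_1+u_1$ holds in row $1$ and fails in every other row), and re-extracting a sequence indiscernible over $u_1$ changes the $a_i$ and severs the equation entirely. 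So both remedies sketched in your ``main obstacle'' paragraph break down; there is no standard absorption fact that does this for you.

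What the paper does instead --- and this is the idea your proposal is missing --- is to insert the row-dependent witnesses as \emph{new coordinates of $b_i$} (in the example above, $b_i=(t,u_i)$, so that $a_{i,2}=b_{i,1}+b_{i,2}$ with $f$ genuinely $\emptyset$-definable). This costs nothing for $U$-independence, since the new coordinates lie in $U_{\mathbb M}$, but one must now choose the witnesses for each row so that the enlarged sequence is still indiscernible, and that is where the real work lies. The paper splits into two cases according to whether the definable set $S_i$ of witness tuples is finite or infinite (a dichotomy that is itself uniform in $i$ by indiscernibility): if finite, it takes the lexicographically least witness --- a canonical, $\emptyset$-definable choice from $a_i$, which is why indiscernibility of the enlarged sequence is automatic, and which uses the hypothesis that $T_0$ extends the theory of linear orders; if infinite, no canonical choice exists, and the paper writes down the type asserting that witnesses exist in $U$ and that the enlarged sequence is indiscernible, proves finite satisfiability by a Ramsey argument, and realizes it by saturation. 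This case analysis, iterated transfinitely to remove dependencies one coordinate at a time, is the substance of the proof and cannot be bypassed by an appeal to indiscernibility alone.
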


\begin{proof}
We inductively construct a sequence $(b_i)_{i\in I}$ from the sequence $(a_i)_{i \in I}$ by removing $U$-dependencies. Let $\alpha< \kappa$ be minimal such that there is $i\in I$ such that $\{a_{i,j}:j\leq \alpha\}$ is not $U$-independent. By minimality of $\alpha$ there are $j_1<\ldots < j_m < \alpha$ and an $\Cal L_0$-$\emptyset$-definable $f:M^{m+\ell+1}\to M$ such that
\begin{equation}\label{eq:indi}\tag{$*$}
\exists u_{i,0},\ldots, u_{i,\ell}\in U_{\mathbb M} \ f(a_{i,j_1},\ldots,a_{i,j_m},u_{i,0},\ldots,u_{i,\ell}) = a_{i,\alpha}.
\end{equation}
By indiscernibility of $(a_i)_{i\in I}$, \eqref{eq:indi} holds for every $i\in I$. For each $i\in I$, define a set
\[
S_i = \{ (u_{0},\ldots,u_{\ell}) \in U^{l+1} \ : \ f(a_{i,j_1},\ldots,a_{i,j_m},u_{0},\ldots,u_{\ell}) = a_{i,\alpha}\}.
\]
By indiscernibility of $(a_i)_{i\in I}$, we have that $S_i$ is finite for some $i$ if and and only $S_{i}$ is finite for every $i\in I$. \newline
We first consider the case that $S_i$ is finite for every $i \in I$. Then for each $i\in I$ we may choose $u_i = (u_{i,0},\ldots u_{i,\ell})$ to be the lexicographically least member of $S_i$. Let $b_i$ be the tuple where $a_{i,\alpha}$ is replaced by $u_i$. As $a_{i,\alpha}$ and $u_i$ are interdefinable over $\{a_{i,j_1},\ldots, a_{i,j_m}\}$, $(b_i)_{i\in I}$ is indiscernible. Furthermore, the set $\{b_{i,1},\dots,b_{i,\alpha+\ell}\}$ is $U$-independent for each $i\in I$.\newline
Now suppose that $S_i$ is infinite. Consider the collection of formulas in variables $(x_{i,j})_{i \in I}$ for $j<\kappa$ stating:

\begin{enumerate}

\item $x_{i,j}=a_{i,j}$ for $j<\alpha$

\item $f(a_{i,j_1},\ldots a_{i,j_m},x_{i,\alpha},\ldots, x_{i,\alpha+\ell})=a_{i,\alpha}$

\item $x_{i,\alpha},\ldots, x_{i,\alpha+\ell} \in U$

\item $x_{i,\alpha+\ell+j} = a_{i,\alpha+j}$ for $j \ge 1$

\item The sequence $(x_i)_{i \in I}$ is indiscernible.

\end{enumerate}
As $S_i$ is infinite, it can be shown by a standard argument using Ramsey's theorem that this collection is finitely satisfiable. Therefore, by saturation there is a realization $(b_i)_{i \in I}$ of this collection. By construction, we have for every $i\in I$ that $\{b_{i,1},\dots, b_{i,\alpha + \ell}\}$ is $U$-independent and $a_{i,\alpha} = f(b_{i,j_1},\ldots b_{i,j_m},b_{i,\alpha},\ldots, b_{i,\alpha+\ell})$. \newline
Inductively continuing, we arrange the sequence $(b_i)_{i\in I}$ as desired.
\end{proof}

\noindent We will use Lemma \ref{lem:indi} to show a criterion for strong dependence for $T_1$. Before we do so, we recall the definition of strong dependence. If $I$ is a linear order, we denote its completion by $\comp(I)$. If $I$ is a linear order, $c=(c_1,\dots,c_n)\in \comp(I)^n$ and $i,i' \in I$, we write $i \sim_c i'$  if
\[
\bigwedge_{j=1}^n \big((i < c_j \leftrightarrow i' < c_j) \wedge (i = c_j \leftrightarrow i' = c_j)\big)
\]
Note that $\sim_{c}$ defines an equivalence relation $\sim_{c}$ on $I$.

\begin{defn} A theory $\tilde{T}$ in a language $\tilde{\Cal L}$ is \textbf{strongly dependent} if for every $\mathcal{M}\models \tilde{T}$, every $b\in \mathcal M^m$ and every indiscernible sequence $(a_i)_{i \in I}$, there is $n\in \N$ and $c\in  \comp(I)^n$ such that $i\sim_c j \Rightarrow \tp_{\tilde{\Cal L}}(a_i | b) = \tp_{\tilde{\Cal L}}(a_j|b)$.
\end{defn}

\noindent For more details and other equivalent definitions of strong dependence, we refer the reader to \cite[Chapter 4]{Simon-Book}.

\begin{lem}\label{lem:stdep} The following are equivalent:
\begin{itemize}
\item[(i)] For every $b\in \mathbb M$, and every $U$-independent indiscernible sequence $(a_i)_{i \in I}$, if $b$ is $U$-independent over $\{a_i : i \in I\}$, then there is $n\in \N$ and $c\in  \comp(I)^n$ such that $i\sim_c j \Rightarrow \tp_{\Cal L_1}(a_i | b) = \tp_{\Cal L_1}(a_j|b)$.
\item[(ii)] For every $b\in \mathbb M$, and indiscernible sequence $(a_i)_{i \in I}$, if $b$ is $U$-independent over $\{a_i : i \in I\})$, then there is $n\in \N$ and $c\in  \comp(I)^n$ such that $i\sim_c j \Rightarrow \tp_{\Cal L_1}(a_i | b) = \tp_{\Cal L_1}(a_j|b)$.
\item[(iii)] $T_1$ is strongly dependent.
\end{itemize}
\end{lem}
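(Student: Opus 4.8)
The plan is to prove the cycle $\text{(iii)}\Rightarrow\text{(ii)}\Rightarrow\text{(i)}\Rightarrow\text{(iii)}$. The first two implications are immediate specializations: (ii) is (iii) restricted to parameters $b$ that are $U$-independent over the sequence, and (i) is (ii) restricted further to $U$-independent sequences, so there is nothing to prove there. All of the work lies in $\text{(i)}\Rightarrow\text{(iii)}$, and for this I would fix $b\in\mathbb M$ together with an arbitrary indiscernible sequence $(a_i)_{i\in I}$ and try to produce $n\in\N$ and $c\in\comp(I)^n$ with $i\sim_c j\Rightarrow\tp_{\Cal L_1}(a_i|b)=\tp_{\Cal L_1}(a_j|b)$.

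I would reach the hypotheses of (i) by two reductions. First, to make the sequence $U$-independent, I would apply Lemma \ref{lem:indi} to replace $(a_i)_{i\in I}$ by a $U$-independent indiscernible sequence $(d_i)_{i\in I}$ whose terms recover the $a_{i,j}$ through a single family of $\Cal L_0$-$\emptyset$-definable functions. Since then $a_i\in\dcl_{\Cal L_1}(d_i)$ uniformly in $i$, any cut tuple that separates the $\Cal L_1$-types of the $d_i$ over $b$ also separates those of the $a_i$, so I may assume from now on that $(a_i)_{i\in I}$ is $U$-independent. Second, to make $b$ itself $U$-independent over the sequence, I would write $b'$ for the coordinates of $b$ outside $U_{\mathbb M}$, pick a maximal $b_0'\subseteq b'$ that is $\dcl_{T_0}$-independent over $U_{\mathbb M}\{a_i:i\in I\}$, and set $b_0:=b_0'\cup(b\cap U_{\mathbb M})$; then $b_0$ is $U$-independent over the sequence and the coordinates of $b$ not among those of $b_0$ lie in $\dcl_{T_0}(U_{\mathbb M}\{a_i:i\in I\}\,b_0)$. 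By finite character of the pregeometry $\dcl_{T_0}$ this last dependence is witnessed by finitely many terms $\{a_k:k\in F\}$ and finitely many elements $\bar u\in U_{\mathbb M}$; absorbing $\bar u$ into the $U$-part of $b_0$ (which leaves $b_0$ $U$-independent over the sequence), the coordinates of $b$ not in $b_0$ come to lie in $\dcl_{\Cal L_1}(b_0\,\{a_k:k\in F\})$.

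I would then apply (i) to the $U$-independent parameter $b_0$ to get a cut tuple $c_0$ separating the $\Cal L_1$-types of the $a_i$ over $b_0$, and take $c$ to be $c_0$ together with the finitely many points of $F$. For $i\sim_c j$ one then has $\tp_{\Cal L_1}(a_i|b_0)=\tp_{\Cal L_1}(a_j|b_0)$ with $i,j$ confined to a common interval determined by $F$, and since the coordinates of $b$ not in $b_0$ are $\Cal L_1$-definable from $b_0$ and $\{a_k:k\in F\}$, it suffices to see that $\tp_{\Cal L_1}(a_i|b_0\,\{a_k:k\in F\})=\tp_{\Cal L_1}(a_j|b_0\,\{a_k:k\in F\})$. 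I expect this to be the main obstacle. One cannot simply delete $F$ from the sequence and reapply (i), since the witnessing terms $\{a_k:k\in F\}$ may be $\dcl_{T_0}$-entangled with the rest of the sequence, so the enlarged parameter need not be $U$-independent over the remaining terms. Following Berenstein, Dolich and Onshuus \cite[Theorem 2.11]{BDO}, the point to push through is that indiscernibility of the original sequence pins down the joint type of $a_i$ with the finitely many terms $\{a_k:k\in F\}$ once $i$ is restricted to a single $F$-interval, so that adjoining these terms to the base costs only the finitely many extra cuts already built into $c$.
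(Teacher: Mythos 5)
Your setup (the trivial implications, the use of Lemma \ref{lem:indi} to reduce to $U$-independent sequences, and the decomposition of $b$ into a part $b_0$ that is $U$-independent over the sequence plus a remainder lying in $\dcl_{T_0}(U_{\mathbb M}\{a_k : k\in F\}\,b_0)$) is sound and runs parallel to the paper's proof, but the step you yourself flag as ``the main obstacle'' is a genuine gap, and the resolution you gesture at does not close it. The cut tuple you construct, $c = c_0 \cup F$ with $c_0$ obtained by applying (i) to $(a_i)_{i\in I}$ over $b_0$, only controls the types $\tp_{\Cal L_1}(a_i|b_0)$ of single terms; it gives no control over the joint types $\tp_{\Cal L_1}(a_F a_i|b_0)$, which is what you actually need, since the remaining coordinates of $b$ are definable from $b_0 a_F$. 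Indiscernibility of $(a_i)_{i\in I}$ pins down $\tp_{\Cal L_1}(a_F a_i)$ over $\emptyset$ on each $F$-interval, not over $b_0$; the assertion that ``adjoining these terms to the base costs only the finitely many extra cuts already built into $c$'' is precisely the statement requiring proof, and nothing in your sketch proves it (were it automatic, strong dependence arguments of this kind would be nearly trivial).

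The paper's key move, which is missing from your proposal, is to put the offending terms into the \emph{sequence} rather than into the base: for each interval $(e_t,e_{t+1})$ cut out by the finite set of indices, the sequence $(a_F a_j)_{j\in(e_t,e_{t+1})\cap I}$ is again indiscernible, and the hypothesis is applied to \emph{these} sequences to produce per-interval cut tuples $d_t$; the final $c$ concatenates the $d_t$ with the points of $F$. Two things are needed to make this legitimate, and your single-pass plan (i)$\Rightarrow$(iii) supplies neither. First, the enlarged sequence $(a_F a_j)_j$ need not be $U$-independent even when $(a_i)_{i\in I}$ is --- exactly the entanglement you noticed --- so one must invoke form (ii), not (i); this is why the paper first proves (i)$\Rightarrow$(ii) via Lemma \ref{lem:indi} and then proves (ii)$\Rightarrow$(iii): in effect Lemma \ref{lem:indi} must be available for a second application, to the enlarged sequences, not only once at the outset. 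Second, the parameter must be $U$-independent over the enlarged sequence; the paper arranges this cheaply by first reducing to $b$ a singleton (via \cite[Proposition 4.26]{Simon-Book}), writing $b = f(g,a_e)$ with $g$ a tuple from $U_{\mathbb M}$, and using $g$ itself as the parameter, which is $U$-independent over everything since $g\setminus U_{\mathbb M}=\emptyset$. (In your tuple setup the analogous check does go through: $b_0$ stays $U$-independent over $\{a_k : k\in F\}\cup\{a_i : i\in(e_t,e_{t+1})\}$ because $\dcl_{T_0}$-independence over $U_{\mathbb M}\{a_i : i\in I\}$ implies independence over any subset --- but this is an argument you would still have to make, and you would still need (ii) rather than (i) to handle the enlarged sequence.)
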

\begin{proof}
It is clear that (iii)$\Rightarrow$(ii)$\Rightarrow$(i). Observe that (i)$\Rightarrow $(ii) follows easily from Lemma \ref{lem:indi}. So we only need to show that (ii) implies (iii). Let $b \in \mathbb M^m$ and $(a_i)_{i \in I}$ be an indiscernible sequence of possibly infinite tuples from $\mathbb M$. It is enough to consider the case $m=1$ (see for example \cite[Proposition 4.26]{Simon-Book}). Suppose $b\in \dcl_{T_0}(U_{\mathbb M}\{a_i : i \in I\})$. Then there are $g \in U_{\mathbb M}^l$, $e=(e_1,\dots, e_k)\in I^k$ and an $\Cal L_0$-$\emptyset$-definable function $f$ such that
\begin{equation}\label{eq:stdep}\tag{$\dagger$}
b=f(g,a_{e_1},\dots, a_{e_k}).
\end{equation}
Without loss of generality assume that $e_1 < \dots < e_k$. Set $a_e=(a_{e_1},\dots, a_{e_k})$, $e_0=-\infty$ and $e_{k+1}=+\infty$. Let $t \in \{0,\dots,k\}$. Now observe that $(a_ea_j)_{j \in (e_{t},e_{t+1})\cap I}$ is an indiscernible sequence. By (ii) there is $d_t \in (\comp(I)\cap (e_{t},e_{t+1}))^{n_t}$ such that for all $i,j \in (e_{t},e_{t+1})$ we have $i \sim_{d_t} j \Rightarrow \tp_{\Cal L_1}(a_{e}a_i|g) = \tp_{\Cal L_1}(a_{e}a_j|g)$. By \eqref{eq:stdep} we get that for all such $i,j$
\[
i \sim_{d_t} j \Rightarrow \tp_{\Cal L_1}(a_i|b) = \tp_{\Cal L_1}(a_j|b).
\]
Set $c:=(d_0e_1d_1\dots e_kd_{k+1})$. It can be checked easily that this is the desired $c\in \comp(I)^n$.
\end{proof}

\noindent Let us now recall the setting of Section \ref{Section:Fusion}. Let $T$ be a complete o-minimal extension of the theory of densely ordered
groups with a distinguished positive element, and let $\Cal L$ be its language. As before, let $\Cal L'$ be a language disjoint from $\Cal L$, and let $T'$ be a complete $\Cal L'$-theory. Furthermore, let $T^*$ be the $\Cal L^*$-theory constructed in Section \ref{Section:Fusion}.

\begin{thm}
If $T'$ is strongly dependent, so is $T^*$.
\end{thm}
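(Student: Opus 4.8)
The plan is to prove strong dependence of $T^*$ by verifying condition (i) of Lemma \ref{lem:stdep} with $T_0 := T$, $T_1 := T^*$, and the unary predicate $U$ interpreted as $P$. This is the natural strategy because Lemma \ref{lem:stdep} reduces strong dependence of $T^*$ to a statement about $P$-independent indiscernible sequences, and the type-analysis developed in Section \ref{Section:Fusion} (especially Proposition \ref{prop:type} and its corollaries) is precisely tailored to control $\Cal L^*$-types of tuples relative to $P_{\Cal M}$. First I would fix $b \in \mathbb M$ and a $P$-independent indiscernible sequence $(a_i)_{i \in I}$ with $b$ being $P$-independent over $\{a_i : i \in I\}$, and seek the required $c \in \comp(I)^n$ coming from the various data below.

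The key step is to split each $a_i$ (and $b$) into its $P$-part and its $P$-independent part. Writing $a_i = (p_i, z_i)$ where $p_i \in P_{\mathbb M}$ is the tuple of coordinates lying in $P_{\mathbb M}$ and $z_i$ is $\dcl_T$-independent over $P_{\mathbb M}$ (and similarly $b = (q, w)$), Proposition \ref{prop:type} tells us that $\tp_{\Cal L^*}(a_i | b)$ is determined by three pieces of data: the o-minimal type $\tp_{\Cal L}(a_i | b)$, the $\Cal L'$-type $\tp_{\Cal L'}(p_i | q)$, and the statement that $z_i$ is $\dcl_T$-independent over $qw\,P_{\mathbb M}$. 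So it suffices to find cuts that simultaneously stabilize all three. For the $\Cal L$-part I would invoke strong dependence of $T$ (which holds since $T$ is o-minimal, hence dp-minimal) to produce finitely many cuts $c^{\Cal L}$ controlling $\tp_{\Cal L}(a_i | b)$. For the $\Cal L'$-part, since the sequence $(p_i)_i$ is an indiscernible sequence of tuples from $P_{\mathbb M}$ and the $\Cal L'$-structure on $P_{\mathbb M}/E$ is (via Proposition \ref{prop:interpret}) a model of the strongly dependent theory $T'$, strong dependence of $T'$ yields finitely many cuts $c^{\Cal L'}$ controlling $\tp_{\Cal L'}(p_i | q)$. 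The $\dcl_T$-independence condition on $z_i$ is invariant along the tail of the sequence and contributes at most boundedly many further cuts. Concatenating these finite tuples of cuts gives the desired $c$.

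I expect the main obstacle to be the bookkeeping needed to transfer strong dependence of $T'$ from the quotient $\Cal N = P_{\mathbb M}/E$ to the sequence $(p_i)$ living in $P_{\mathbb M}$ itself, and to combine this cleanly with the $\Cal L$-cuts. Concretely, one must check that $(p_i)_i$, after passing to $E$-classes, forms an indiscernible sequence in $\Cal N$ and that the parameter $q$ maps to a legitimate parameter in $\Cal N$, so that strong dependence of $T'$ applies and the resulting cuts in $\comp(I)$ indeed pin down $\tp_{\Cal L'}(p_i | q)$ via Lemma \ref{lem:sametype}. A subtlety is that the $\Cal L'$-type is insensitive to the $E$-class representative, so one has to argue that it is exactly the $E$-class data (not the finer $P_{\mathbb M}$-data) that is being tracked, which is guaranteed by Lemma \ref{lem:sametype} and Corollary \ref{lem:induced}. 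Once the three sources of cuts are assembled, the implication $i \sim_c j \Rightarrow \tp_{\Cal L^*}(a_i | b) = \tp_{\Cal L^*}(a_j | b)$ follows immediately from Proposition \ref{prop:type}, completing the verification of Lemma \ref{lem:stdep}(i) and hence the theorem.
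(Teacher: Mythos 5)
Your proposal is correct and follows essentially the same route as the paper: verify condition (i) of Lemma \ref{lem:stdep} with $T_0:=T$, $T_1:=T^*$, $U:=P$, split each $a_i$ into its $P$-part and its $P$-independent part, use Proposition \ref{prop:type} to reduce $\tp_{\Cal L^*}(a_i|b)$ to an $\Cal L$-type, an $\Cal L'$-type and an independence statement, and concatenate the cuts supplied by strong dependence of $T$ and of $T'$. The only cosmetic difference is that the paper argues in two explicit cases, $b\in P_{\mathbb M}$ and $b\notin \dcl_T(\{a_i : i\in I\}P_{\mathbb M})$ (invoking the pregeometry exchange property to see that each $v_i$ remains $\dcl_T$-independent over $P_{\mathbb M}b$ in the second case), whereas you fold both into the single splitting $b=(q,w)$.
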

\begin{proof}
We now apply Lemma \ref{lem:stdep} with $T_0:=T$, $T_1:=T^*$ and $U:=P$. As before, note that $\dcl_T$ is a pregeometry, since $T$ is o-minimal.\newline
\noindent Let $b\in \mathbb M$ and $(a_i)_{i \in I}$ be an $P$-independent sequence such that $b$ is $P$-independent over $\{a_i : i \in I\}$. Since each $a_i$ is $P$-independent, we have (after possibly changing the order of entries of the $a_i$'s) that for each $i\in I$ there are tuples $u_i,v_i$ of elements of $\mathbb M$ such that for each $i \in i$
\begin{itemize}
\item $a_i = u_iv_i$,
\item $u_i$ is a tuple of elements in $P_{\mathbb M}$,
\item $v_i$ is $\dcl_T$-independent over $P_{\mathbb M}$.
\end{itemize}
Since $b$ is $P$-independent over $\{a_i : i \in I\}$, we get that either $b\in P_{\mathbb M}$ or $b\notin \dcl_T(\{a_i  :  i \in I\}P_{\mathbb{M}})$. We consider the two different cases.\newline

\noindent Let $b\in P_{\mathbb M}$. By Proposition \ref{prop:type} the type $\tp_{\Cal L^*}(u_iv_i|b)$ is determined by
\begin{itemize}
\item $\tp_{\Cal L}(u_iv_i|b)$,
\item the statement ``$u_i$ is a tuple of elements of $P_{\mathbb{M}}$'' and $\tp_{\Cal L'}(u_i|b)$,
\item the statement ``$v_i$ is $\dcl_T$-independent over $P_{\mathbb M}$''.
\end{itemize}
Since both $T$ and $T'$ are strongly dependent, we can find $c\in \comp(I)^n$ such that for every $i,j \in I$
\[
i\sim_c j \Rightarrow \Big( \tp_{\Cal L}(u_iv_i|b)=\tp_{\Cal L}(u_jv_j|b) \hbox{ and } \tp_{\Cal L'}(u_i|b)=\tp_{\Cal L'}(u_j|b) \Big).
\]
Thus for every $i,j \in I$ with $i\sim_c j$ we get $\tp_{\Cal L^*}(a_i|b)=\tp_{\Cal L^*}(a_j|b)$.\newline

\noindent Now suppose that $b\notin \dcl_T(\{a_i  :  i \in I\}P_{\mathbb{M}})$. In particular, $b \notin \dcl_T(P_{\mathbb{M}})$. Since $\dcl_T$ is a pregeometry, $v_i$ is $\dcl_T$-independent over $P_{\mathbb{M}}b$ for each $i\in I$. By
Proposition \ref{prop:type}, for each $i\in I$ the type $\tp_{\Cal L^*}(u_iv_i|b)$ is determined by
\begin{itemize}
\item $\tp_{\Cal L}(u_iv_i|b)$,
\item the statement ``$u_i$ is a tuple of elements of $P_{\mathbb{M}}$'' and $\tp_{\Cal L'}(u_i)$,
\item the statement ``$v_i$ is $\dcl_T$-independent over $P_{\mathbb M}b$''.
\end{itemize}
As before using strong dependence of $T$ and $T'$, we can find $c\in \comp(I)^n$ such that for every $i,j \in I$
\[
i\sim_c j \Rightarrow \Big( \tp_{\Cal L}(u_iv_i|b)=\tp_{\Cal L}(u_jv_j|b) \hbox{ and } \tp_{\Cal L'}(u_i)=\tp_{\Cal L'}(u_j) \Big).
\]
Thus for every $i,j \in I$ with $i\sim_c j$ we get $\tp_{\Cal L^*}(a_i|b)=\tp_{\Cal L^*}(a_j|b)$.
\end{proof}

\noindent This completes the proof of Theorem A. In the next section we will deduce Theorem B from Theorem A. \newline

\noindent It is worth pointing out in this section on strong dependence that by Dolich and Goodrick \cite[Corollary 2.4]{DG} every strongly dependent expansion of the real field has o-minimal open core. In contrast to this restriction, our Theorem B(4) shows that there is a large variety of such expansions of the real field.

\section{Proof of Theorem B}

The purpose of this section is twofold. We first deduce Theorem B from our proof of Theorem A. Then we show that in Theorem B the statement ``$\Cal S$ interprets a model of $T'$'' cannot be replaced by the statement ``$\Cal S$ defines a model of $T'$''.

\begin{proof}[Proof of Theorem B]
Let $\Cal R=(\R,<,+,\dots)$ be an o-minimal expansion of the real ordered additive group in a language $\Cal L$ and let $T'$ be a theory such that $|\Cal L|<|\R|$ and $|T'|\leq |\R|$. Let $T^*$ be the theory as constructed in Section \ref{Section:Fusion}. Since $T^*$ satisfies the statements (1)-(4) of Theorem A, it is only left to show that $\Cal R$ can be expanded to a model of $T^*$.
Since $|\Cal L|<|\R|$, we can find a $\dcl_T$-basis of cardinality at least $|T'|$. Since $\dcl_T(\emptyset)$ is dense in $\R$, we are able to choose this basis such that it is dense in $\R$. Now apply Lemma \ref{lem:expand}.
\end{proof}

\begin{prop}\label{prop:ocdlo} Let $\Cal S$ be an expansion of $(\R,<,+)$.
The following are equivalent
\begin{enumerate}
\item $\Cal S$ defines an infinite discrete linear order.
\item $\Cal S$ defines an order with order type $\omega$.
\item The open core of $\Cal S$ is not o-minimal.
\end{enumerate}
\end{prop}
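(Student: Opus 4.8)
The plan is to prove the cycle of implications $(2)\Rightarrow(1)\Rightarrow(3)\Rightarrow(2)$, drawing on the analysis of o-minimal open core for expansions of ordered groups in \cite{DMS1} for the two substantial steps and using a direct argument over $\R$ for the rest. The step $(2)\Rightarrow(1)$ is immediate: an order of type $\omega$ is an infinite discrete linear order, since every element has an immediate successor and every element other than the least has an immediate predecessor.

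For $(1)\Rightarrow(3)$ I would invoke \cite{DMS1}, whose central result characterizes, for an expansion $\Cal S$ of a densely ordered group, exactly when $\Cal S$ has o-minimal open core, the obstruction being the definability of an infinite discrete linear order. O-minimality of $\Cal S^{\circ}$ is incompatible with any definable infinite discrete configuration, so the presence of a definable infinite discrete linear order in $\Cal S$ witnesses that $\Cal S^{\circ}$ is not o-minimal. This is precisely the content of the equivalence the introduction promises to deduce from \cite{DMS1}.

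For $(3)\Rightarrow(2)$ I would again use \cite{DMS1}: when the open core of an expansion of an ordered group fails to be o-minimal, $\Cal S$ defines an infinite closed discrete subset $X\subseteq\R$. It then remains to extract from $X$ a definable order of type $\omega$. Being closed and discrete, $X$ is locally finite and hence, being infinite, unbounded above or unbounded below; replacing $X$ by $-X$ if necessary (definable since $\Cal S$ expands $(\R,<,+)$) we may assume $X$ is unbounded above. Fix $c\in X$ and set $Y:=X\cap[c,\infty)$. Then $Y$ is definable, closed, bounded below and unbounded above, so every nonempty subset of $Y$ attains its infimum inside $Y$ (otherwise that infimum would be an accumulation point of $Y$, contradicting discreteness); thus $(Y,<)$ is a well-order. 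If its order type were at least $\omega+1$, the element in position $\omega$ would be the limit of a strictly increasing sequence from $Y$, again producing an accumulation point; hence the order type is exactly $\omega$, and $(Y,<)$ is the required definable order of type $\omega$.

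The main obstacle is pinning down the exact form of the two inputs taken from \cite{DMS1}: one must verify that failure of o-minimality of the open core yields a definable \emph{closed} discrete subset of $\R$ (so that the extraction of a copy of $\omega$ runs cleanly as above), and, conversely, that a definable infinite discrete linear order is genuinely incompatible with o-minimality of the open core. Both are consequences of the open-core analysis of \cite{DMS1}; the only genuinely new work is the elementary passage from a closed discrete subset of $\R$ to an order of type $\omega$ carried out in the previous paragraph.
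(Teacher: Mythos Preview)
Your cycle $(2)\Rightarrow(1)\Rightarrow(3)\Rightarrow(2)$ has two genuine gaps, both of which you flag as ``obstacles'' but then incorrectly claim are already contained in \cite{DMS1}. The paper runs the cycle the other way, $(1)\Rightarrow(2)\Rightarrow(3)$ and $(3)\Rightarrow(2)$, and the work you are deferring to \cite{DMS1} is exactly what the paper has to supply.

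For $(1)\Rightarrow(3)$: the definable infinite discrete linear order $(D,\prec)$ lives in some $\R^n$ and $\prec$ is an arbitrary definable binary relation, unrelated to the Euclidean topology. There is no reason $D$ should be a topologically discrete subset of $\R^n$; a priori $D$ could be dense. So $(D,\prec)$ is not directly a witness to non-o-minimality of $\Cal S^\circ$, and \cite{DMS1} does not contain the black-box implication you invoke---that implication is precisely the content of this proposition. The paper instead first proves $(1)\Rightarrow(2)$: since a subset of $\R^n$ is finite iff it is closed, bounded and discrete, ``finite'' is definable, so one passes to the definable set of $e\in D$ with finite $\prec$-initial segment and obtains order type $\omega$. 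The step $(2)\Rightarrow(3)$ then needs a further nontrivial case split on whether some coordinate projection of $D$ is somewhere dense; in the dense case one has to build, from the $\omega$-order, an honest infinite discrete subset of $\R$.

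For $(3)\Rightarrow(2)$: what \cite[2.14(2)]{DMS1} yields is only an infinite \emph{discrete} definable $D\subseteq\R$, not a closed one. Your extraction argument needs local finiteness (closed $+$ discrete), which fails for sets such as $\{1/n:n\geq 1\}$. The paper makes exactly this case distinction: if $D\cap[-a,a]$ is finite for every $a$ your argument goes through; otherwise one may take $D$ bounded, define the gap function $f(d)=\sup\{\varepsilon>0:(d-\varepsilon,d+\varepsilon)\cap D=\{d\}\}$, and order $D$ by decreasing $f$-value (breaking ties with $<$). This order has type $\omega$ because each $D_\varepsilon=\{d:f(d)\geq\varepsilon\}$ is finite for bounded $D$.
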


\begin{proof}
We show that (1) implies (2).
Suppose $\Cal S$ defines an infinite discrete linear order $(D,\prec)$.
Fix $d \in D$.
Either $D_{ \prec d}$ or $D_{\succ d}$ is infinite.
After replacing $\prec$ with the reverse order if necessary, we may suppose that $D_{ \succ d}$ is infinite.
After replacing $(D,\prec)$ with $(D_{\succeq d }, \prec)$ if necessary we suppose that $(D, \prec)$ has a minimal element.
Let $E \subseteq D$ be the set of $e$ such that $D_{\prec e}$ is finite.
Recall that a subset of $\R^n$ is finite if and only if it is closed, bounded and discrete.
It follows that $E$ is definable.
Note that $E_{\prec e}$ is finite for all $e \in D$.
Then $(E, \prec)$ is a discrete linear order with minimal element and finite initial segments. Thus it has order type $\omega$.\newline
We now show that (2) implies (3).
Suppose that $(D, \prec)$ is a definable order with order type $\omega$ and $D \subseteq \R^n$.
First suppose that there is no coordinate projection $\pi: \R^n \to \R$ such that $\pi(D)$ is somewhere dense. Since $D$ is infinite, there is a coordinate projection $\rho : \R^n \to \R$ such that $\rho(D)$ is infinite. Then $\rho(D)$ is an infinite, nowhere dense, subset of $\R$. Thus the open core of $\Cal S$ is not o-minimal. \newline
Now let $\pi: \R^n \to \R$ be a coordinate projection such that $\pi(D)$ is somewhere dense. Let $a,b\in \R$ such that $(a,b)$ is an interval in in which $\pi(D)$ is dense.
We now reduce to the case when $D$ is a dense subset of an open interval.
Note that $D' = \{ e \in D : a < \pi(e) < b \}$ is an infinite, and hence $\prec$-cofinal, subset of $D$.
It follows that $(D',\prec)$ has order type $\omega$.
After replacing $D$ with $D'$ if necessary we suppose that $\pi(D)$ is a subset of $(a,b)$.
We put an order $\prec_\pi$ on $\pi(D)$ by declaring $x \prec_\pi y$ if there is a $e \in D$
such that $\pi(e) = x$ and $\pi(e') \neq y$ for all $e' \prec e$.
It is easy to see that $(\pi(D), \prec_\pi)$ has order type $\omega$.
After replacing $(D,\prec)$ with $(\pi(D), \prec_\pi)$ we suppose that $D$ is a dense subset of $(a,b)$.
We declare
\[
Y := \{ x \in D \ : \ \forall z \in D (z\prec x) \rightarrow (z < x) \}.
\]
That is, $Y$ is the set of $e \in D$ such that $e$ the $<$-maximal element of $D_{\preceq e}$.
By density of $D$ in $(a,b)$, it is easy to see that $Y$ is infinite and that $(Y, <)$ is order-isomorphic to $(\N, <)$.
Thus $Y$ is an infinite discrete definable subset of $\R$. Hence the closure of $Y$ does not have interior, but infinitely many connected components. Therefore $\Cal S$ does not have o-minimal open core.\newline

\noindent  Since (2) trivially implies (1), it is enough to show that (3) implies (2). Suppose that the open core of $\Cal S$ is not o-minimal. By \cite[2.14 (2)]{DMS1} there is an infinite discrete subset $D\subseteq \R$ definable in $\Cal S$. First consider the case that $D\cap [-a,a]$ is a finite set for every $a \in \R_{>0}$. Then either $((-D) \cap [0,\infty),<)$ or $(D\cap[0,\infty),<)$ has order type $\omega$. From now on we can assume that there is $a\in \R_{>0}$ such that the cardinality of $D\cap [-a,a]$ is infinite. Thus without loss of generality we can assume that $D$ is bounded. For $\varepsilon \in \R_{>0}$ set
\[
D_{\varepsilon} := \{ d \in D \ : \ (d-\varepsilon,d+\varepsilon) \cap D = \{d\}\}.
\]
Since $D$ is bounded, each $D_{\varepsilon}$ is finite. Moreover, since $D$ is discrete and infinite, there is a function $f: D\to \R_{>0}$ definable in $\Cal S$ mapping $d\in D$ to the supremum of all $\varepsilon \in \R_{>0}$ with $d\in D_{\varepsilon}$. We now define the following order on $D$: let $d_1,d_2\in D$, we set $d_1 \prec d_2$ whenever
one of the following conditions holds:
\begin{itemize}
\item $f(d_1) > f(d_2)$,
\item $f(d_1)=f(d_2)$ and $d_1 < d_2$.
\end{itemize}
It can be checked easily that $(D,\prec)$ has order type $\omega$.
\end{proof}

\noindent Let $T'$ be the theory of an infinite discrete order. By Theorem B there exists an expansion of $(\R,<,+)$ that has o-minimal open core and interprets a model of $T'$. However, by Proposition \ref{prop:ocdlo} there is no expansion of $(\R,<,+)$ that has o-minimal open core and defines a model of $T'$. Therefore in Theorem B the statement ``$\Cal S$ interprets a model of $T'$'' cannot be replace by the statement ``$\Cal S$ defines a model of $T'$''.

\section{Noiseless NIP expansions of $(\R,<,+)$}

Recall that an expansion of $(\R,<)$ is noiseless if it does not define a somewhere dense and co-dense subset of $\R$. In this section we show that every noiseless NIP expansion of $(\R,<,+,1)$ has definable choice and hence eliminates imaginaries. This statement will be established for the slightly larger class of noiseless expansions of $(\R,<,+,1)$ that do not define a Cantor set. A \textbf{Cantor set} is a non-empty compact subset of $\R$ that neither has interior nor isolated points. By \cite[Theorem B]{HW-Monadic} every NTP$_2$ (and hence every NIP) expansion of $(\R,<,+)$ does not define a Cantor set.\newline

\noindent Fix a noiseless expansion $\Cal R$ of $(\R,<,+,1)$ that does not define a Cantor set. Throughout this section, \emph{definable} will mean \emph{definable in $\Cal R$}. For a subset $X\subseteq \R^n$, we denote the (topological) closure of $X$ by $\cl(X)$ and the interior of $X$ by $\Int(X)$.

\begin{lem}\label{lem:isolatedpoint} Let $X\subseteq \R$ be a non-empty definable set with empty interior. Then $X$ contains an isolated point.
\end{lem}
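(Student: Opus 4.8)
The plan is to prove the contrapositive: if a non-empty definable set $X \subseteq \R$ with empty interior has no isolated points, then $\Cal R$ either defines a somewhere dense and co-dense set or a Cantor set, contradicting our standing assumptions on $\Cal R$. So assume $X$ is non-empty, definable, $\Int(X) = \emptyset$, and every point of $X$ is a limit point of $X$.

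First I would pass to the topological closure. Since $X$ is definable, $\cl(X)$ is definable (closure is a definable operation in any expansion of a dense linear order, or more concretely $\cl(X) = \R \setminus \Int(\R \setminus X)$). Because $X$ has empty interior, I want to argue that $\cl(X)$ also has empty interior: if $\cl(X)$ contained an interval $I$, then $X \cap I$ would be dense in $I$, and since $\Int(X)=\emptyset$ the set $X \cap I$ is also co-dense in $I$, making $X$ a somewhere dense and co-dense definable set --- forbidden by noiselessness. Hence $\cl(X)$ has empty interior. Moreover $\cl(X)$ has no isolated points: an isolated point of $\cl(X)$ would have to lie in $X$ (closures add only limit points) and would be an isolated point of $X$, contradicting our assumption.

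Next I would produce a bounded piece. The set $\cl(X)$ is closed, non-empty, with empty interior and no isolated points, but it may be unbounded. Intersecting with a suitable closed interval $[a,b]$ chosen so that $\cl(X) \cap (a,b) \neq \emptyset$, and then taking the closure of this intersection, yields a non-empty definable set $K$ that is compact (closed and bounded), has empty interior, and --- after a small amount of care at the two endpoints $a,b$, e.g.\ by choosing $a,b \notin X$ which is possible since $X$ has empty interior --- still has no isolated points. Thus $K$ is a non-empty compact definable subset of $\R$ with neither interior nor isolated points, i.e.\ $K$ is a Cantor set. This contradicts the assumption that $\Cal R$ does not define a Cantor set, which completes the contrapositive and hence the lemma.

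The main obstacle I anticipate is the endpoint bookkeeping in the last step: when restricting to $[a,b]$ and reclosing, one must ensure that no isolated points are accidentally created at the boundary and that the resulting set is genuinely non-empty and compact. This is handled by choosing the cut points $a,b$ to avoid $X$ itself (legitimate since $\Int(X)=\emptyset$ forces $X$ to miss a dense set of reals) and by selecting the interval so that it captures a limit point of $X$, guaranteeing that the localized set inherits the no-isolated-points property from $\cl(X)$. The two genuine inputs beyond elementary topology are the noiselessness hypothesis (used to pass from ``$X$ somewhere dense'' to a contradiction, forcing $\cl(X)$ to have empty interior) and the cited fact \cite[Theorem B]{HW-Monadic} that NIP, or merely the assumption here that $\Cal R$ defines no Cantor set, rules out the final configuration.
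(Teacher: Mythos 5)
Your proposal follows exactly the paper's route: noiselessness forces $\cl(X)$ to have empty interior (otherwise $X$ would be somewhere dense and co-dense), and the no-Cantor-set hypothesis then forces $\cl(X)$, hence $X$, to have an isolated point. The paper's proof is essentially just those three sentences, leaving the passage to a \emph{compact} set implicit; your write-up supplies that localization, and that is the one place where it slips.

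The slip: choosing the cut points $a,b \notin X$ is not enough. The danger is a point of $\cl(X)\setminus X$ sitting at an endpoint: if $a \in \cl(X)\setminus X$ and $a$ is a limit of $\cl(X)$ only from the left, then $a$ is an isolated point of $K = \cl(X)\cap[a,b]$ (note this set is already closed, so your extra closure operation does nothing), and $K$ fails to be a Cantor set. The correct condition is $a,b \notin \cl(X)$, and it is available for essentially the reason you cite: you have already shown that $\cl(X)$ has empty interior, and it is closed, so $\R\setminus\cl(X)$ is open and dense; pick $a<b$ in it with $(a,b)\cap\cl(X)\neq\emptyset$. Then $K=\cl(X)\cap[a,b]=\cl(X)\cap(a,b)$ is compact, non-empty, and has empty interior, and every point of $K$ lies in the open interval $(a,b)$, so an isolated point of $K$ would be an isolated point of $\cl(X)$, of which there are none under your contrapositive assumption. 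With this one-word repair your argument is complete and coincides with the paper's.
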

\begin{proof}
Since $\Cal R$ is noiseless, the closure $\cl(X)$ of $X$ has empty interior. Because $\Cal R$ does not define a Cantor set, $\cl(X)$ has an isolated point. It follows directly that $X$ has an isolated point.
\end{proof}

\noindent Therefore in an expansion of $(\R,<,+)$ that does not define a Cantor set, every definable subset of $\R$ contains a locally closed point. For expansions of the real field, the existence of definable Skolem functions in expansions satisfying the latter condition was shown in \cite[Lemma 9.1]{F-tameopen}.

\begin{lem}\label{lem:nowheredense} Let $C\subseteq \R^{n+1}$ be $\emptyset$-definable such that $C_x$ has empty interior for every $x\in \R^n$. Then there is an $\emptyset$-definable function $f: \pi(C) \to \R$ such that $\gr(f) \subseteq C$, where $\pi : \R^{n+1} \to \R^n$ is the projection onto the first $n$ coordinates.
\end{lem}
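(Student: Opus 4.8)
The plan is to construct the Skolem function $f$ fiberwise by selecting, for each $x \in \pi(C)$, a canonically defined point of $C_x$, using Lemma \ref{lem:isolatedpoint} to guarantee such points exist. Since $C_x$ is non-empty and has empty interior, Lemma \ref{lem:isolatedpoint} tells us $C_x$ has an isolated point. The first step is to isolate these uniformly: for each $x$, consider the set of isolated points of $C_x$, and among those, pick a distinguished one so that the selection is $\emptyset$-definable in $x$. The natural choice is to select an isolated point via a definable scheme that does not depend on any parameters, for instance by looking at a distinguished isolated point extracted through the order and the group structure.

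The key idea is to make the selection of an isolated point of $C_x$ definable and single-valued. First I would define the set $C' \subseteq \R^{n+1}$ consisting of pairs $(x,t)$ such that $t$ is an isolated point of $C_x$; this is $\emptyset$-definable since isolatedness of $t$ in $C_x$ is expressible by an existential statement over the ordered group about a small interval $(t-\varepsilon, t+\varepsilon)$ meeting $C_x$ only in $t$. By Lemma \ref{lem:isolatedpoint}, $\pi(C') = \pi(C)$, so it suffices to build a Skolem function for $C'$ whose fibers $C'_x$ are the sets of isolated points of $C_x$. Now each $C'_x$ is itself a definable discrete (hence empty-interior) subset of $\R$; to pick a canonical element I would attempt to use minimality in absolute value together with the order: declaring $f(x)$ to be the isolated point of $C_x$ of least absolute value, breaking ties by taking the smaller (more negative) one. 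The main obstacle is that the set of isolated points of least absolute value need not be a singleton, and more seriously, an infimum of absolute values need not be attained, so I must verify that the selection really produces a single point in $C_x$.

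The hard part will be guaranteeing attainment and uniqueness of the selected point. To handle this, rather than minimizing over all of $C'_x$ at once, I would iterate the extraction of isolated points: repeatedly passing to the set of isolated points of the current fiber removes points and, because the ambient structure does not define a Cantor set, controls the topological complexity, so that after finitely many steps (or by a definable limiting argument) one reaches a canonically specified point. Alternatively, and more cleanly, I would reduce to the case where $C_x$ is discrete by replacing $C$ with $C'$, and then use the fact that a definable discrete subset of $\R$ in an expansion of $(\R,<,+,1)$ has a definable well-behaved enumeration; the distinguished $\emptyset$-definable point is then the one nearest to $0$ that is a local minimum of the distance-to-$0$ function restricted to $C'_x$, with ties resolved by $<$. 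Establishing that this local construction yields exactly one point, uniformly and $\emptyset$-definably in $x$, with graph contained in $C$, is the crux; once that is secured, the resulting $f$ is the desired Skolem function.
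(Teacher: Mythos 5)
Your reduction to the set $C'$ of isolated points is fine, and you correctly identify the crux: selecting one point, $\emptyset$-definably and uniformly in $x$, from each fiber $C'_x$. But none of your three proposed resolutions closes that gap, and a single example defeats all of them: take $C_x = \{1/n : n \geq 1\} \cup \{0\}$, so that $C'_x = \{1/n : n \geq 1\}$, a discrete set that is not closed. The point of least absolute value does not exist (the infimum $0$ is not attained). Iterating the extraction of isolated points does nothing: since $C'_x$ is discrete, every point of $C'_x$ is isolated in $C'_x$, so the iteration stabilizes after the first step, and the no-Cantor-set hypothesis cannot make it ``reach a canonically specified point.'' Finally, every point of a discrete set is trivially a local minimum of the distance-to-$0$ function restricted to that set, so your last selection rule again reduces to ``the point nearest $0$,'' which does not exist in this example. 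The appeal to ``a definable well-behaved enumeration'' of definable discrete subsets of $\R$ is not a fact available in an arbitrary expansion of $(\R,<,+,1)$, and for discrete sets that are not closed it is exactly as problematic as the original selection problem.

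The missing idea, which is how the paper proceeds, is to make the fiber \emph{uniformly} discrete before selecting. Define $g : \pi(C) \to \R$ by letting $g(x)$ be $\sup\{ r \in \R_{>0} : \exists y \in C_x \ (y-r,y+r) \cap C_x = \{y\}\}$ when this supremum exists, and $1$ otherwise; this is $\emptyset$-definable, and by Lemma \ref{lem:isolatedpoint} the set over which the supremum is taken is non-empty. Then put $D := \{(x,y) \in C : (y - \tfrac{g(x)}{2}, y + \tfrac{g(x)}{2}) \cap C_x = \{y\}\}$. Each $D_x$ is non-empty (some point of $C_x$ has isolation radius at least $\tfrac{g(x)}{2}$), and any two distinct points of $D_x$ are at distance at least $\tfrac{g(x)}{2}$, so $D_x$ is closed and discrete, hence locally finite. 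On such a set min/max selection is well defined: take $f(x) = \min (D_x \cap [0,\infty))$ when that set is non-empty, and $f(x) = \max (D_x \cap (-\infty,0))$ otherwise. In the example above, $g(x) = 1/2$ and $D_x = \{1\}$, so $f(x)=1$. It is this fiberwise uniform lower bound on the isolation radius that converts ``discrete'' into ``closed and discrete'' and makes a canonical choice exist; without it, no order-theoretic tie-breaking rule on the set of all isolated points can succeed.
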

\begin{proof}
By Lemma \ref{lem:isolatedpoint} we have that for all $x\in \pi(C)$ the set $C_x$ has an isolated point whenever $C_x$ is non-empty. Let $g : \pi(C) \to \R$ map $x\in \pi(C)$ to
\[
\sup \{r \in \R_{>0} \ : \ \exists y \in C_x \ (y-r,y+r) \cap C_x = \{y\} \}
\]
if such supremum exists, and to $1$ otherwise. Define
\[
D:= \{ (x,y) \in C \ : \  (y-\tfrac{g(x)}{2},y+\tfrac{g(x)}{2}) \cap C_x = \{y\}\}.
\]
It is easy to check that $D_x$ is non-empty if and if $C_x$ is non-empty.
For each $x\in \pi(D)$ and $y_1,y_2 \in D_x$, we have $|y_1-y_2| \geq \tfrac{g(x)}{2}$. Therefore
the set $D_x$ is closed and discrete for each $x \in \pi(D)$. Let $f: \pi(C) \to \R$ be the function defined by
\[
x\mapsto \left\{
\begin{array}{ll}
    \min D_x \cap [0,\infty), &  \hbox{if $D_x \cap [0,\infty)$ is non-empty} \\
     \max D_x \cap (-\infty,0), &  \hbox{otherwise.}
    \end{array}
  \right.
\]
Observe that $f$ is well-defined, because $D_x$ is closed and discrete. From the definition of $f$ we obtain directly that $\gr(f)\subseteq C$.
\end{proof}

\begin{prop}\label{prop:skolem} Let $A\subseteq \R^{m}\times \R^n$ be $\emptyset$-definable. Then there is an $\emptyset$-definable function $f: \pi(A) \to \R^n$ such that
\begin{enumerate}
\item $\gr(f)\subseteq A$,
\item $f(a) = f(b)$ whenever $a,b\in \pi(A)$ and $A_a=A_b$,
\end{enumerate}
where $\pi : \R^{m+n} \to \R^m$ is the projection onto the first $m$ coordinates.
\end{prop}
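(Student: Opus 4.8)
The plan is to establish definable choice by induction on $n$, the number of coordinates in the fiber, using the noiseless-plus-no-Cantor-set hypothesis to dispose of the ``nowhere dense'' directions via Lemma \ref{lem:nowheredense}, and the ordered group structure to handle the ``somewhere dense'' directions by taking midpoints of cells. The base case $n=1$ is where the real content lies, so I would treat it first and then bootstrap to general $n$ by applying the $n=1$ case to peel off one coordinate at a time.

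\begin{proof}
First I would reduce to the case $n=1$. Suppose the statement holds for fibers in $\R$. Given $A \subseteq \R^m \times \R^n$ with $n \geq 2$, write a point of the fiber as $(y', y_n)$ with $y' \in \R^{n-1}$. Let $B \subseteq \R^{m+n-1} \times \R$ be $A$ viewed with the last coordinate split off, i.e. $B = \{(x,y',y_n) : (x,y',y_n)\in A\}$, and let $\pi' : \R^{m+n} \to \R^{m+n-1}$ be projection onto the first $m+n-1$ coordinates. The main subtlety is that condition (2) must be tracked through the induction: the choice function on the last coordinate must depend only on the fiber, not on the particular $(x,y')$. I would phrase the inductive hypothesis so that the selected value depends only on $B_{(x,y')}$, apply the $n=1$ case to $B$ to obtain $g : \pi'(B) \to \R$ selecting the last coordinate, then apply the inductive hypothesis to $\pi'(B) \subseteq \R^m \times \R^{n-1}$ to select $y'$, and finally combine. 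Checking that the composite still satisfies the fiber-invariance condition (2) is routine but must be done carefully.

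For the base case $n=1$, fix $A \subseteq \R^m \times \R$ and I would split the base into two $\emptyset$-definable pieces according to the topology of the fiber. Let $C := \{(x,y) \in A : y \notin \Int(A_x)\}$ be the part of $A$ lying in the boundary of its fiber, and note $C_x = A_x \setminus \Int(A_x)$ has empty interior for every $x$ (since it is closed in $A_x$ with no interior relative to $\R$, as $\Cal R$ is noiseless). For $x \in \pi(A)$ with $A_x$ having empty interior, $C_x = A_x$ is non-empty; for $x$ with $A_x$ having non-empty interior, $\Int(A_x)$ is a non-empty open definable set, which is a finite or countable union of open intervals. I would set $U := \{x \in \pi(A) : \Int(A_x) \neq \emptyset\}$, which is $\emptyset$-definable. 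On the complement $\pi(A) \setminus U$ the fiber has empty interior, so Lemma \ref{lem:nowheredense} directly supplies an $\emptyset$-definable selector into $C \subseteq A$.

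On $U$ the fiber has non-empty interior, and here I would use the group structure to select canonically from $\Int(A_x)$. The key observation is that $\Int(A_x)$, being open, has a leftmost maximal open subinterval among those meeting, say, a canonically chosen bounded region; more robustly, I would define $f(x)$ to be the midpoint of the first (in a definable enumeration by, e.g., smallest left endpoint that is $\geq 0$, or nearest to $0$) bounded maximal subinterval of $\Int(A_x)$ when one exists, falling back to an endpoint-based recipe for unbounded components, all expressible using $<$, $+$, and $1$. The point requiring care is that the selection from $\Int(A_x)$ must be determined by the set $A_x$ alone to guarantee (2); since the whole recipe reads off only $\Int(A_x)$ (equivalently $A_x$, as $\Int$ is determined by $A_x$), fiber-invariance is automatic. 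I expect the main obstacle to be making the selection from $\Int(A_x)$ simultaneously $\emptyset$-definable, fiber-invariant, and total (handling fibers whose interior has no bounded component or no canonical first interval), which is exactly where the distinguished element $1$ and the additive structure are needed to pin down a concrete definable point.
\end{proof}
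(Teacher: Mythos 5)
Your overall skeleton is the paper's: reduce to $n=1$ by induction (tracking fiber-invariance, as you correctly note), split $A$ into the boundary part $C=\{(x,y)\in A : y \notin \Int(A_x)\}$ and the interior part, and handle empty-interior fibers by Lemma \ref{lem:nowheredense}. The gap is in your treatment of the interior part, and it is exactly the point you flag but do not resolve. Your recipe selects ``the first bounded maximal subinterval of $\Int(A_x)$'' (smallest left endpoint $\geq 0$, or nearest to $0$). No such component need exist: the connected components of a definable open set can accumulate. For example, in $(\R,<,+,\cdot,2^{\Z})$, a standard d-minimal structure and therefore noiseless and without Cantor sets, the open set $\bigcup_{n\geq 1}\,(2^{-n-1},\,2^{-n-1}+2^{-n-2})$ is definable; all of its components are bounded and they accumulate at $0$, so there is no component nearest to $0$, no smallest left endpoint, and no unbounded component to fall back on. No formula in $<$, $+$, $1$ alone will pick a component of an arbitrary such fiber, so your hope that ``the distinguished element $1$ and the additive structure'' suffice is where the argument breaks.

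The paper's resolution is to never choose a component at all. It forms the $\emptyset$-definable set $D$ of all canonical points of $B_x:=\Int(A_x)$: all midpoints of bounded connected components of $B_x$, together with $1+\sup(\R\setminus B_x)$ when $\R\setminus B_x$ is bounded above, $-1+\inf(\R\setminus B_x)$ when it is bounded below, and $0$ when $B_x=\R$. Then $D\subseteq B$, $\pi(D)=\pi(B)$, and each fiber $D_x$ has empty interior, because any two midpoints of distinct components are separated by a point of $\R\setminus B_x$. Hence Lemma \ref{lem:nowheredense} applies a second time, to $D$, and produces the selector on the interior part; since $D_x$ is determined by $A_x$ and the selector of Lemma \ref{lem:nowheredense} depends only on the fiber, condition (2) comes for free. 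So the missing idea is this second application of Lemma \ref{lem:nowheredense}: it is the noiseless/no-Cantor-set hypothesis, not the group structure, that lets one choose from open fibers, after reducing them to the empty-interior case via the set of all midpoints.
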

\begin{proof}
Using induction it is easy to reduce to the case that $n=1$. We can split $A$ into $B, C\subseteq \R^{m+n}$ such that $A=B\cup C$ and
\[
B := \{(x,y) \in A \ : \ y \in \Int(A_x)\}, \quad C := \{(x,y) \in A \ : \ y \in A_x \setminus \Int(A_x)\}.
\]
Observe that $C_x$ has empty interior for each $x\in \pi(C)$. Thus by Lemma \ref{lem:nowheredense} there is a definable function $f_1 : \pi(C) \to \R$ such that $\gr(f_1) \subseteq C$. Now define a subset $D\subseteq \R^{m+1}$ such that $(x,y)\in D$ whenever one the following conditions holds:
\begin{itemize}
\item  $y$ is a midpoint of a connected component of $B_x$,
\item $y = 1 + \sup (\R\setminus B_x)$ and $\R\setminus B_x$ is bounded from above,
\item $y=-1 + \inf (\R \setminus B_x)$ and $\R \setminus B_x$ is
bounded from below,
\item $y= 0$ and $B_x = \R$.
\end{itemize}
It is easy to see that $D$ is definable, $D\subseteq B$ and $\pi(B)=\pi(D)$. Moreover, $D_x$ has empty interior for each $x\in \pi(D)$. By Lemma \ref{lem:nowheredense} there is a definable function $f_2: \pi(D) \to \R$ such that $\gr(f_2) \subseteq D$. We now define $f: \pi(A) \to \R$ by
\[
x \mapsto \left\{
            \begin{array}{ll}
              f_1(x), & \hbox{if $B_x=\emptyset$;} \\
              f_2(x), & \hbox{otherwise.}
            \end{array}
          \right.
\]
It follows directly that $\gr(f)\subseteq A$. Furthermore, the reader can easily check that for $a\in \pi(A)$ the value of $f(a)$ only depends on $A_a$ and not on $a$. Therefore condition (2) holds for $f$ as well.
\end{proof}

\noindent Theorem E follows immediately from Proposition \ref{prop:skolem}. Note that Theorem E fails for NIP expansions of $(\R,<,+,1)$ in general. For example, the structure $(\R,<,+,1,\Q)$ is NIP (see for example \cite[Corollary 3.2]{GH-Dependent}), does not have definable Skolem functions (\cite[5.4]{DMS1}) and does not eliminate imaginaries (\cite[5.5]{DMS1}).\newline

\bibliographystyle{abbrv}
\bibliography{HW-Bib}

\end{document}